\newcommand{\boldx}{\boldsymbol{x}}
\newcommand{\boldy}{\boldsymbol{y}}
\newtheorem{theorem}{Theorem}[section]
\newtheorem{lemma}{Lemma}[section]
\theoremstyle{definition}
\theoremstyle{remark}
\newtheorem{remark}[theorem]{Remark}
\newtheorem{definition}[theorem]{Definition}
\begin{document}

\title[A finite element method for non-divergence form]
{Analysis of a finite element method for second order uniformly elliptic PDEs in non-divergence form}

\author{Weifeng Qiu}
\address{Department of Mathematics, City University of Hong Kong}
\email{weifeqiu@cityu.edu.hk}

%\thanks{This work was supported in part by the NSF under
%  DMS-1014817. The authors also gratefully acknowledge the support
%  from the IMA (Minneapolis) during their 2010-11 annual program}

% 26B35  	Special properties of functions of several variables, 
%                   Hölder conditions, etc.
% 26B12  	Calculus of vector functions
% 26A16  	Lipschitz (Hölder) classes
% 52B10  	Three-dimensional polytopes
% 46E35  	Sobolev spaces and other spaces of "smooth'' 
%                   functions, embedding theorems, trace theorems
% 65L60  	Finite elements, Rayleigh-Ritz, Galerkin 
%                   and collocation methods

\subjclass[2010]{65L60, 65N30, 46E35, 52B10, 26A16}

\date{}

\dedicatory{}

%    Abstract is required.
\begin{abstract}
We propose one finite element method for both second order linear uniformly elliptic PDE in non-divergence form 
and the uniformly elliptic Hamilton-Jacobi-Bellman (HJB) equation. 
For both linear elliptic PDE in non-divergence form and the HJB equation, 
we prove the well-posedness of strong solution in $W^{2,p}(\Omega)$ and optimal convergence in discrete 
$W^{2,p}$-norm of the finite element approximation to the strong solution for $1<p\leq 2$ 
on convex polyhedra in $\mathbb{R}^{d}$ ($d=2,3$). If the domain is a two dimensional non-convex 
polygon, $p$ is valid in a more restricted region. Furthermore, we relax the assumptions on the continuity 
of coefficients of the HJB equation, which have been widely used in literature.
\end{abstract}

\keywords{Non-divergence form, Hamilton-Jacobi-Bellman, $W^{2,p}$ estimate, Lipschitz polyhedra}

\maketitle

\section{Introduction}
We consider the well-posedness of the strong solution in $W^{2,p}(\Omega)$ and optimal convergence to the strong 
solution in discrete $W^{2,p}$-norm of a finite element method for both second order linear uniformly elliptic PDE 
in non-divergence form and the uniformly elliptic Hamilton-Jacobi-Bellman equation on Lipschitz polyhedral domain 
$\Omega$ in $\mathbb{R}^{d}$ ($d=2,3$). The Lipschitz polyhedral domain $\Omega$ is assumed to be convex if $d=3$, and 
can be non-convex if $d=2$.

%We mainly consider optimal convergence to the strong solution in $W_{h}^{2,p}(\Omega)$-norm 
%(see (\ref{discrete_second_order_norm})) of a finite element method for both second 
%order linear uniformly elliptic PDE in non-divergence form (\ref{nondiv_pde_original}) and the elliptic 
%Hamilton-Jacobi-Bellman equation (\ref{hjb_eqs_original}) on Lipschitz polyhedral domains in $\mathbb{R}^{d}$ 
%($d=2,3$). We consider the well-posedness of the strong solution in $W^{2,p}(\Omega)$ of 
%(\ref{nondiv_pde_original}) and (\ref{hjb_eqs_original}) as well. 

The linear uniformly elliptic PDE in non-divergence form is find the solution $u$ satisfying
\begin{align}
\label{nondiv_pde_original}
A:D^{2}u + \boldsymbol{b}\cdot\nabla u + cu = f \text{ in } \Omega, \qquad 
u = 0 \text{ on } \partial\Omega.
\end{align}
We assume that $A \in [L^{\infty}(\Omega)]^{d\times d}$ is a symmetric and uniformly positive definite 
matrix-valued function on $\Omega$, $\boldsymbol{b}\in [L^{\infty}(\Omega)]^{d}$, and $c \in L^{\infty}(\Omega)$ 
with $c \leq 0$.

The uniformly elliptic Hamilton-Jacobi-Bellman (HJB) equation is to find the solution $u$ satisfying 
\begin{align}
\label{hjb_eqs_original}
\sup_{\alpha \in \Lambda} [A^{\alpha}:D^{2}u 
+ \boldsymbol{b}^{\alpha}\cdot \nabla u + c^{\alpha}u - f^{\alpha}] = 0 \text{ in } \Omega,
\qquad u = 0 \text{ on } \partial\Omega.
\end{align}
Here the non-empty set $\Lambda$ is called index set.
We assume that there are positive constants $\nu \leq \overline{\nu}$ such that 
for any $\boldsymbol{\xi} \in \mathbb{R}^{d}$ and any $\alpha \in \Lambda$,
\begin{align}
\label{hjb_uniform_ellipticity}
& \nu \vert \boldsymbol{\xi}\vert^{2} \leq \Sigma_{1 \leq i,j \leq d} 
a_{ij}^{\alpha}(\boldx) \xi_{i} \xi_{j} \leq \overline{\nu} 
\vert \boldsymbol{\xi}\vert^{2} \text{ and } 
c^{\alpha}(\boldx) \leq 0, \quad \forall 
\boldx \in \Omega \text{ almost everywhere}; \\
\label{hjb_coeffs_bounds}
& \sup_{\alpha \in \Lambda} \Vert \boldsymbol{b}^{\alpha}
\Vert_{L^{\infty}(\Omega)} + \sup_{\alpha \in \Lambda} 
\Vert c^{\alpha}\Vert_{L^{\infty}(\Omega)}  < + \infty.
\end{align}

The linear elliptic PDE in non-divergence form (\ref{nondiv_pde_original}) is the starting point to 
study the elliptic HJB equation (\ref{hjb_eqs_original}), because (\ref{hjb_eqs_original}) becomes 
(\ref{nondiv_pde_original}) if the index set $\Lambda$ contains a single element.
In addition, (\ref{nondiv_pde_original}) appears in the linearization and numerical methods of fully non-linear 
second order PDEs \cite{Caffarelli1,FN1,Neilan1}. 
The HJB equation (\ref{hjb_eqs_original}) provides a fundamental characterization of the value functions 
associated with stochastic control problems, which have widespread applications across engineering, physics, 
economics, and finance \cite{FS1,JensenSmears2013}.

\subsection{Literature review}

Unlike partial differential equations expressed in divergence form, the theory governing linear equations in non-divergence 
form, and more broadly, fully nonlinear PDEs, relies on alternative solution frameworks, including classical solutions, 
viscosity solutions, and strong solutions. 

%For viscosity solutions, the matrix valued coefficients $A$ in 
%(\ref{nondiv_pde_original}) and $A^{\alpha}$ in (\ref{hjb_eqs_original}) are usually 
%to be restricted with global continuity. Though  \cite{CCKS1996} and  \cite{Jensen1995} provide weaker notions of viscosity 
%solutions which can deal with discontinuous $A$ and $A^{\alpha}$, no comparison principle has been proved except when a strong 
%solution exists, in which case it is equivalent to the viscosity solution in \cite{CCKS1996}. 
%The well-known non-uniqueness counterexample of Nadirashvili \cite{Nadirashvili1997}, further studied by M. Safonov
%\cite{Safonov1999}, shows that there cannot be a comparison principle even 
%for (\ref{nondiv_pde_original}) with discontinuous coefficients $A$.
%In contrast, strong solution may deal with discontinuous $A$ and $A^{\alpha}$ 
%in a natural manner.

There are several papers 
\cite{BS1991,BZ1,CamilliJakobsen2009,CL1,DK1,FL1,FS1,JensenSmears2013,
Kocan1,Oberman1,NZ1,SalgadoZhang2019}
focusing on convergence to the viscosity solutions.  
In the Barles–Souganidis framework \cite{BS1991}, monotonicity serves as a fundamental principle underpinning 
numerical schemes to ensure convergence to viscosity solutions of fully nonlinear partial differential equations. 
This concept is also applicable to the monotone finite difference methods for the HJB equation \cite{BZ1,FS1}. 
A monotone finite element like scheme for the HJB equation was proposed in \cite{CamilliJakobsen2009}. 
%The convergent monotone finite
%element methods for the viscosity solutions of parabolic HJB equations were
%proposed and analyzed in \cite{Jensen2017,JensenSmears2013}.
In \cite{NZ1}, Nochetto and Zhang introduced a two-scale method based on the integro-differential 
formulation of (\ref{nondiv_pde_original}), where a discrete Alexandroff–Bakelman–Pucci estimate 
was established. This two-scale method \cite{NZ1} was extended to the Isaacs equation \cite{SalgadoZhang2019}.
However, up to our knowledge, all these numerical methods converge to the viscosity with low 
order accuracy. In addition, most of them have stencils which increase 
under mesh refinement, such that the corresponding linear system may not be sparse. 

With respect to numerical methods for the convergence to the strong solutions, 
there are three main categories based on their techniques for the linear elliptic PDEs 
in non-divergence form. 
\begin{itemize}

\item[(I)] \textbf{$A$ can be discontinuous but $\gamma A$ dominated by $I_{d}$ where 
$\gamma$ is a positive weight function}. 
In \cite{SmearsSuli2013}, Smears and S\"uli introduced and analyzed a Discontinuous Galerkin 
(DG) method for (\ref{nondiv_pde_original}). The main assumptions are 
the coefficients $A$ is dominated by $I_{d}$ after being multiplied by a 
positive weight function $\gamma$ (Cordes condition) and the domain $\Omega$ is convex. 
The stability and optimal rate of convergence in discrete $H^{2}$-norm were 
proven in \cite{SmearsSuli2013}. Following this direction, there are several papers 
\cite{BHW2021,Gallistl1,Wu2021}. The technique in \cite{SmearsSuli2013} 
has been extended to the HJB equation (\ref{hjb_eqs_original}) in 
\cite{SmearsSuli2014,Gallistl2,Wu2021}. 
It is not clear how to prove optimal rate of convergence in  
discrete $W^{2,p}$-norm ($p\neq 2$) for this kind of numerical method.

\item[(II)] \textbf{$A$ is globally continuous}. In \cite{FHM2017}, Feng, Hennings and Neilan 
introduced and analyzed a $C^{0}$-conforming DG method for 
(\ref{nondiv_pde_original}). The main assumptions are $A \in C^{0}(\overline{\Omega})$ 
and the domain $\Omega$ has $C^{1,1}$ boundary. They followed the approach in 
\cite[Chapter~$9$]{GT01} to prove that their numerical 
method achieves stability and optimal rate of convergence 
in discrete $W^{2,p}$-norm for any $1 < p < +\infty$. 
However, this numerical method 
and its variant - a DG method in \cite{FNS1} can not deal with discontinuous 
coefficients $A$, since they have the jump of $A\nabla u_{h}$ along mesh interfaces 
($u_{h}$ is the numerical solution). Therefore, the numerical methods in 
\cite{FHM2017,FNS1} can not be applied for the HJB equation (\ref{hjb_eqs_original}).

\item[(III)] \textbf{Residual minimization methods}. 
The main advantage of residual minimization methods \cite{QiuZhang2020,Tran2025,GallistlTran2025} 
is that they can easily inherit the a priori estimate of PDE operators of (\ref{nondiv_pde_original}) 
and (\ref{hjb_eqs_original}). However, the minimal requirements of the regularity of 
true solution $u$ for these methods are highly 
affected by the choice of the norm to measure the residual in their designs. 
It is not clear which norm to choose for the two cases mentioned above.

\end{itemize}

A natural question is whether it is possible to design one numerical method for both 
linear elliptic PDE in non-divergence form and the elliptic HJB equation on Lipschitz 
polyhedra in $\mathbb{R}^{d}$ ($d=2,3$) and achieve optimal convergence in discrete 
$W^{2,p}$-norm to the strong solution with a wide range of $p$. 
We give positive answer in this paper.

\subsection{Main contributions}

We introduce the finite element methods (\ref{nondiv_fem}) and (\ref{hjb_fem}) for 
the linear elliptic PDE in non-divergence form (\ref{nondiv_pde_original}) and 
the HJB equation (\ref{hjb_eqs_original}), respectively. It is easy to see that 
the FEM (\ref{hjb_fem}) becomes (\ref{nondiv_fem}), if the index set $\Lambda$ has 
a single element. Therefore, we can claim that essentially, there is only one 
FEM for (\ref{nondiv_pde_original}) and (\ref{hjb_eqs_original}).
We would like to point out that the discrete gradient operator $\overline{\nabla}_{h}$ 
(see (\ref{discrete_grad})) used in (\ref{nondiv_fem}) and (\ref{hjb_fem}) has been 
introduced in \cite{LakkisPryer2011}, and the finite element method (\ref{nondiv_fem}) 
is similar to the one in \cite{LakkisPryer2011}. \cite{Neilan2} provides optimal convergence 
in discrete $H^{2}$-norm for the method in \cite{LakkisPryer2011} for (\ref{nondiv_pde_original}) 
with $A\in C^{0}(\overline{\Omega})$ on domains with $C^{1,1}$ boundary. 
The technique in \cite{Neilan2} is similar to those in \cite{FHM2017,FNS1}.
It is not clear how to generalize the analysis in \cite{Neilan2} for (\ref{hjb_eqs_original}).

With respect to the $W^{2,p}$-norm for PDE analysis and the discrete $W^{2,p}$-norm (the $W_{h}^{2,p}$-norm introduced 
in (\ref{discrete_second_order_norm})) for numerical analysis, the ranges of $p$ considered in this paper depend on 
whether the the Lipschitz polyhedral domain $\Omega$ is convex. The ranges of $p$ are described below: 
\begin{subequations}
\label{p_ranges}
\begin{align}
\label{p_range1}
& \text{(PDE analysis) } p \in (1,2]  \text{ if } \Omega  \text{ is a convex polyhedra }(d=2,3), \\
\nonumber
& \qquad \text{ while } p \in (1, \frac{4}{3} + \epsilon_{2}) \text{ if } \Omega 
\text{ is a non-convex polygon in } \mathbb{R}^{2}; \\
\label{p_range2}
& \text{(Numerical analysis) } p \in (1,2]  \text{ if } \Omega  \text{ is a convex polyhedra }(d=2,3), \\
\nonumber 
& \qquad \text{ while } p \in (\frac{4}{3}-\epsilon_{1}, \frac{4}{3} + \epsilon_{2}) \text{ if } \Omega 
\text{ is a non-convex polygon in } \mathbb{R}^{2}.
\end{align}
\end{subequations}
Here $\epsilon_{1}$ and $\epsilon_{2}$ are two positive numbers which depend on the domain $\Omega$ 
(see Lemma~\ref{lemma_Poisson_regularity}). 

For the elliptic HJB equation (\ref{hjb_eqs_original}), we further assume that there is a subset 
$E_{0} \subset \Omega$ with zero $d$-dimensional Lebesgue measure such that 
\begin{align}
\label{hjb_coeffs_alternatives} 
& \text{there is a countable subset } \tilde{\Lambda}\subset \Lambda \text{ satisfying that}\\
\nonumber 
& \qquad \forall \boldx \in \Omega \setminus E_{0} \text{ and } \forall 
\alpha \in \Lambda \text{ and } \forall \epsilon > 0, \text{there is } 
\tilde{\alpha} \in \tilde{\Lambda} \text{ such that} \\
\nonumber 
& \qquad \vert A^{\alpha}(\boldx) - A^{\tilde{\alpha}}(\boldx)\vert 
+ \vert \boldsymbol{b}^{\alpha}(\boldx) - \boldsymbol{b}^{\tilde{\alpha}}
(\boldx) \vert + \vert c^{\alpha}(\boldx) - c^{\tilde{\alpha}}(\boldx) \vert 
+ \vert f^{\alpha}(\boldx) - f^{\tilde{\alpha}}(\boldx) \vert < \epsilon.
\end{align}
Here, we denote by $\vert B \vert= \left( \Sigma_{1\leq i, j \leq d} \vert b_{ij} \vert^{2} \right)^{\frac{1}{2}}$ 
for any matrix $B \in \mathbb{R}^{d\times d}$. 

\subsubsection{Contributions for the linear elliptic PDE in non-divergence form (\ref{nondiv_pde_original})}

For the linear elliptic PDE in non-divergence form (\ref{nondiv_pde_original}), 
the main theoretical results are Theorem~\ref{thm_A_continuous_complete}, 
Theorem~\ref{thm_pde_cordes} and Theorem~\ref{thm_global_estimate_cordes}. 

When the coefficient matrix $A \in C^{0}(\overline{\Omega})$, Theorem~\ref{thm_A_continuous_complete} 
provides the well-posedness of strong solution in $W^{2,p}(\Omega)$ and optimal convergence 
with respect to $W_{h}^{2,p}$-norm (see (\ref{discrete_second_order_norm})) of the numerical 
solution of the FEM (\ref{nondiv_fem}). The well-posedness of (\ref{nondiv_pde_original}) 
provided by \cite[Theorem~$9.15$]{GT01} and optimal convergence of the FEM in \cite{FHM2017}
require $\partial\Omega$ to be $C^{1,1}$. 
In contrast, Theorem~\ref{thm_A_continuous_complete} 
is valid on Lipschitz polyhedral domains (see (\ref{p_ranges}) for the detailed description). 
In fact, with the help of the well-posedness of strong solution stated in 
Theorem~\ref{thm_A_continuous_complete},  the $W_{h}^{2,p}$-norm stability of the FEM in \cite{FHM2017} 
can be immediately extended from domains with $C^{1,1}$ 
boundary to Lipschitz polyhedral domains with $p$ described in (\ref{p_range2}). 
Furthermore, in contrast to the proof of numerical stability presented in \cite{FHM2017}, which rests upon 
a specialized duality argument (see the proof of \cite[Lemma~$3.5$]{FHM2017}) contingent on 
the well-posedness of (\ref{nondiv_pde_original}),  the present work establishes numerical stability 
(Theorem~\ref{thm_global_estimate}) by invoking the discrete compactness property  (Lemma~\ref{lemma_conv_compactness}) 
and the uniqueness of the strong solution of (\ref{nondiv_pde_original}). Thus, the analysis of this paper is 
quite different from that of \cite{FHM2017,FNS1,Neilan2}.  

We explain the methodology to prove 
Theorem~\ref{thm_A_continuous_complete} by the following steps. 
\begin{itemize}

\item Step $1$. We prove Theorem~\ref{thm_global_A_continuous} which give the following global 
$W^{2,p}$ estimate (\ref{global_w2p_estimate_A_continuous}) of the linear elliptic PDE in 
non-divergence form (\ref{nondiv_pde_original}):
\begin{align*}
\Vert w\Vert_{W^{2,p}(\Omega)} \leq C \Vert A:D^{2}w + \boldsymbol{b}\cdot\nabla w + cw \Vert_{L^{p}(\Omega)}, 
\quad \forall w \in W^{2,p}(\Omega) \cap W_{0}^{1,p}(\Omega),
\end{align*} 
where the range of $p$ is described in (\ref{p_range1}).
The most difficult part of the proof of Theorem~\ref{thm_global_A_continuous} is to 
obtain the above estimate for $1 < p \leq \frac{3}{2}$ if 
$\Omega$ is a convex polyhedra in $\mathbb{R}^{3}$. It is because the Alexandroff–Bakelman–Pucci estimate 
can not be applied directly due to the fact $W^{2,p}(\Omega)$ can not be embedded into $C^{0}(\overline{\Omega})$.
To overcome this difficulty, we provide Lemma~\ref{lemma_pde_unique1} which shows the uniqueness of the strong solution 
of (\ref{nondiv_pde_original}) in $W^{2,p}(\Omega) \cap W_{0}^{1,p}(\Omega)$. Then Theorem~\ref{thm_global_A_continuous} 
basically follows \cite[Theorem~$1.1$]{QT2023} by a proof by contraction. 
Though it is inspired by that of \cite[Lemma~$9.16$]{GT01}, 
the proof of Lemma~\ref{lemma_pde_unique1} avoids using the common technique of ``flattening" 
$\partial\Omega$, which is critical in \cite[Lemma~$9.16$]{GT01}. Thus Lemma~\ref{lemma_pde_unique1} 
is valid on convex polyhedra in $\mathbb{R}^{3}$ (It is trivial to obtain the uniqueness on two dimensional 
Lipschitz polygon). 

\item Step $2$. We provide Lemma~\ref{lemma_gloabl_estimate_with_reaction} which give the inequality 
(\ref{gloabl_estimate_with_reaction}): 
\begin{align*}
\Vert w_{h} \Vert_{W_{h}^{2,p}(\Omega)} \leq C \left( \Vert \mathcal{L}_{\tilde{A},h}w_{h}\Vert_{L_{h}^{p}(\Omega)} 
+ \Vert w_{h}\Vert_{W^{1,p}(\Omega)} \right), \qquad \forall w_{h} \in V_{h}.
\end{align*}
Here, $V_{h}$ is the finite element space introduced in Section~\ref{sec_notation}, 
$\mathcal{L}_{\tilde{A},h}$ is the Riesz representation (see (\ref{L_h_operator})) of the FEM (\ref{nondiv_fem}) with 
$\boldsymbol{b}=\boldsymbol{0}$ and $c=0$.
 The proof of Lemma~\ref{lemma_gloabl_estimate_with_reaction} 
mimics that of \cite[Lemma~$3.4$]{FHM2017}. However, due to the construction of the discrete gradient 
operator (\ref{discrete_grad}), the proof of Lemma~\ref{lemma_gloabl_estimate_with_reaction} is much 
more technical.

\item Step $3$. To erase the term $\Vert w_{h}\Vert_{W^{1,p}(\Omega)}$ on the right hand side of 
(\ref{gloabl_estimate_with_reaction}), we prove by contradiction with the help of 
the discrete compactness - Lemma~\ref{lemma_conv_compactness} (can be considered as an analogue of 
\cite[Theorem~$5.2$]{BuffaOrtner2009} for the discrete $W^{2,p}$-norm) and the uniqueness of 
strong solution of (\ref{nondiv_pde_original}) by Theorem~\ref{thm_global_A_continuous}. Then we obtain 
Theorem~\ref{thm_global_A_continuous} which the stability of the FEM (\ref{nondiv_fem}):
\begin{align*}
\Vert w_{h} \Vert_{W_{h}^{2,p}(\Omega)} \leq C \Vert \tilde{A}: \overline{\nabla}_{h}(\nabla w_{h}) +\tilde{\boldsymbol{b}}\cdot\nabla w_{h} + \tilde{c} w_{h}\Vert_{L_{h}^{p}(\Omega)}, 
\qquad \forall w_{h} \in V_{h}.
\end{align*}
We would like to point out that to obtain the same result, the authors of \cite{FHM2017} used a specialized duality 
argument (see the proof of \cite[Lemma~$3.5$]{FHM2017}), which relies on the well-posedness of strong solution of 
(\ref{nondiv_pde_original}). Actually, our approach in this step is a discrete analogue of \cite[Lemma~$9.17$]{GT01}.
Furthermore, to prove Lemma~\ref{lemma_conv_compactness}, 
we provide Lemma~\ref{lemma_L2_proj_props} which shows for any $p \in [1, + \infty]$, 
\begin{align*}
\Vert v_{h}\Vert_{W^{1,p}(\Omega)} \leq C \Vert v_{h}\Vert_{W_{h}^{2,p}(\Omega)}, 
\qquad \forall v_{h} \in V_{h}.
\end{align*}
Here the constant $C$ is independent of $p \in [1, + \infty]$.

\item Step $4$. Roughly speaking, we apply the discrete compactness - Lemma~\ref{lemma_conv_compactness} with 
Theorem~\ref{thm_global_A_continuous} to prove the limit of numerical solution of the FEM (\ref{nondiv_fem}) is 
a solution of (\ref{nondiv_pde_original}). Then by the global $W^{2,p}$ estimate (\ref{global_w2p_estimate_A_continuous}), 
the well-posedness of a strong solution to (\ref{nondiv_pde_original}) is proven for the range of $p$ 
described in (\ref{p_range2}). Finally by some density argument, we extend $p$ to the range described in (\ref{p_range1}).  
Optimal convergence of the numerical solution of the FEM (\ref{nondiv_fem}) is straightforward. 
\end{itemize}

When the coefficient matrix $A$ is discontinuous but $\gamma A$ is dominated by $I_{d}$ (the positive weight function 
$\gamma$ is introduced in (\ref{def_gamma})), Theorem~\ref{thm_pde_cordes} and Theorem~\ref{thm_global_estimate_cordes} 
provide well-posedness of strong solution in $W^{2,p}(\Omega)$ of (\ref{nondiv_pde_original}) and 
optimal convergence with respect to $W_{h}^{2,p}$-norm of the numerical 
solution of the FEM (\ref{nondiv_fem}). We would like to point out that Theorem~\ref{thm_pde_cordes} and 
Theorem~\ref{thm_global_estimate_cordes} are special cases of Theorem~\ref{thm_hjb_pde_wellposedness} 
and Theorem~\ref{thm_hjb_conv} for the elliptic HJB equation, respectively.

\subsubsection{Contributions for the elliptic HJB equation (\ref{hjb_eqs_original})}

For the elliptic HJB equation (\ref{hjb_eqs_original}), the main theoretical results are 
Theorem~\ref{thm_hjb_pde_wellposedness} and Theorem~\ref{thm_hjb_conv}.  
%With (\ref{hjb_uniform_ellipticity},\ref{hjb_coeffs_bounds}) and the assumption~(\ref{hjb_coeffs_alternatives}),  
Theorem~\ref{thm_hjb_pde_wellposedness} provides the well-posedness of 
strong solution in $W^{2,p}(\Omega)$ of the elliptic HJB equation (\ref{hjb_eqs_original}),  
if the coefficients satisfy the condition (\ref{Cordes_coefficients_strong_hjb_pde_general}). 
In parallel, Theorem~\ref{thm_hjb_conv} provides optimal convergence of the FEM (\ref{hjb_fem}) 
in $W_{h}^{2,p}$-norm, if the coefficients satisfy the condition 
(\ref{Cordes_coefficients_strong_hjb_fem_general}). 
When the domain is convex and $p=2$, the condition (\ref{Cordes_coefficients_strong_hjb_pde_general}) 
is identical to the Cordes condition used in \cite{SmearsSuli2014} but 
the condition~(\ref{Cordes_coefficients_strong_hjb_fem_general}) is more restrictive.
Please see Remark~\ref{remark_hjb_pde_wellposedness} and Remark~\ref{remark_hjb_conv} for the special 
case $\boldsymbol{b}^{\alpha}=\boldsymbol{0}$ and $c^{\alpha} = 0$ for any $\alpha \in \Lambda$. 

We would like to emphasize that \cite{SmearsSuli2014} assumes $A^{\alpha} \in C^{0}(\overline{\Omega}\times 
\Lambda)$ and the index set $\Lambda$ is a compact metric space, while \cite[Proposition~$4.3$]{GallistlTran2025} 
assumes that there are $\beta \in (0,1]$ and $0< C < + \infty$ such that 
$\Vert A^{\alpha}\Vert_{C^{0,\beta}(\overline{\Omega})}  < C$  for any $\alpha \in \Lambda$ 
(\cite[Proposition~$4.3$]{GallistlTran2025} assumes $\boldsymbol{b}^{\alpha}=\boldsymbol{0}$, $c^{\alpha} = 0$, 
and $f^{\alpha}=f$). 
So, if the index set $\Lambda$ has a single element, the elliptic HJB equation (\ref{hjb_eqs_original}) 
will not become the linear elliptic PDE in non-divergence form (\ref{nondiv_pde_original}) with 
discontinuous matrix coefficient $A$. In contrast, the assumption~(\ref{hjb_coeffs_alternatives}) allows 
discontinuity of $A^{\alpha}$, $\boldsymbol{b}^{\alpha}$ and $c^{\alpha}$. By 
Lemma~\ref{lemma_equi_uniform_continuity_to_alternatives}, we can conclude that the assumptions used by 
\cite{SmearsSuli2014} and \cite[Proposition~$4.3$]{GallistlTran2025} imply that the 
assumption~(\ref{hjb_coeffs_alternatives}) holds.

%If the index set $\Lambda$ is countable, then for any $w\in W^{2,1}(\Omega)$, the function 
%\begin{align*}
%\sup_{\alpha \in \Lambda} [A^{\alpha}:D^{2}w 
%+ \boldsymbol{b}^{\alpha}\cdot \nabla w + c^{\alpha}w - f^{\alpha}]
%\end{align*} 
%is a measurable function on $\Omega$. However, it is not obviously true if $\Lambda$ is uncountable. 
%Lemma~\ref{lemma_hjb_measurable} shows that the 
%assumption~(\ref{hjb_coeffs_alternatives}) implies (see (\ref{hjb_uncountable_countable_equivalent}))
%\begin{align*}
%\sup_{\alpha \in \Lambda} [A^{\alpha}:D^{2}w 
%+ \boldsymbol{b}^{\alpha}\cdot \nabla w + c^{\alpha}w - f^{\alpha}]
%= \sup_{\tilde{\alpha} \in \tilde{\Lambda}} [A^{\tilde{\alpha}}:D^{2}w 
%+ \boldsymbol{b}^{\tilde{\alpha}}\cdot \nabla w + c^{\tilde{\alpha}}w - f^{\tilde{\alpha}}],
%\end{align*}
%where $\tilde{\Lambda}$ is a countable subset of $\Lambda$. Thus this issue is solved with 
%the assumption~(\ref{hjb_coeffs_alternatives}) which is more general than assumptions 
%in \cite{SmearsSuli2014} and \cite[Proposition~$4.3$]{GallistlTran2025}.

We notice that for any $\alpha \in \Lambda$, $\boldsymbol{b}^{\alpha}$ and $c^{\alpha}$ may be discontinuous. 
To have one numerical method for the linear elliptic PDE in non-divergence form (\ref{nondiv_pde_original}) and 
the HJB equation (\ref{hjb_eqs_original}), the positive weight function $\gamma^{\alpha}$ in (\ref{def_gamma_alpha}) 
can not depend on $\boldsymbol{b}^{\alpha}$ and $c^{\alpha}$. Otherwise the analysis for the FEM (\ref{nondiv_fem}) 
for (\ref{nondiv_pde_original}) will not be valid for $A\in C^{0}(\Omega)$. Furthermore, we don't want to include 
the parameter $\lambda$ used in the similar weight function in \cite{SmearsSuli2014} which might not be known 
beforehand. These restrictions make the analysis 
of the FEM (\ref{hjb_fem}) for (\ref{hjb_eqs_original}) more difficult than that in \cite{SmearsSuli2014}.

\subsection{Structure of this paper}
The rest of this paper is organized as follows. In Section~$2$, we provide basic notations, the FEMs
for (\ref{nondiv_pde_original}) and (\ref{hjb_eqs_original}), and some preliminary results. In 
Section~$3$, we provide theoretical results for (\ref{nondiv_pde_original}). In Section~$4$, we provide 
theoretical results for (\ref{hjb_eqs_original}).

\section{Assumptions, notations, FEM and preliminary results}

In Section~\ref{sec_notation}, we provide the assumption of mesh, the discrete gradient operator (\ref{discrete_grad}) 
and several basic notations. In Section~\ref{sec_fem}, we introduce the FEM (\ref{nondiv_fem}) and the FEM  
(\ref{hjb_fem}) for (\ref{nondiv_pde_original}) and (\ref{hjb_eqs_original}), respectively. Actually, 
the FEM (\ref{hjb_fem}) becomes the FEM (\ref{nondiv_fem}) if the index set $\Lambda$ has a single element. 
In Section~\ref{sec_preliminary}, we provide some preliminary results. 

\subsection{Mesh and space notation}
\label{sec_notation}
We denote by $\mathcal{T}_{h}$ a quasi-uniform, simplicial, and conforming triangulation of the domain $\Omega$.
Let $\mathcal{F}_{h}^{I}$ be the collection of all $(d-1)$-dimensional interior mesh faces in $\mathcal{T}_{h}$, 
$\mathcal{F}_{h}^{B}$ the collection of $(d-1)$-dimensional boundary mesh faces in $\mathcal{T}_{h}$, and 
$\mathcal{F}_{h}:=\mathcal{F}_{h}^{I} \cup \mathcal{F}_{h}^{B}$ 
the collection of all $(d-1)$-dimensional mesh faces in $\mathcal{T}_{h}$.

Let $K^{+},K^{-}\in \mathcal{T}_{h}$ and $F = \partial K^{+} \cap \partial K^{-} \in \mathcal{F}_{h}^{I}$.
Without loss of generality, we assume that the global labelling number of $K^{+}$ is larger than that of $K^{-}$. 
We introduce the jump and average of a scalar or vector valued function $v$ as 
\begin{align*}
[v]|_{F}:= v^{+} - v^{-}, \qquad \{ v \}|_{F} = \frac{1}{2}\left(v^{+} + v^{-}\right),
\end{align*}
where $v^{\pm} = v|_{K^{\pm}}$.
On a boundary mesh face $F \in \mathcal{F}_{h}^{B}$ with $F = \partial K^{+} \cap \partial\Omega$, 
we define $[v]|_{F} = \{ v\}|_{F} = v^{+}$.

We define $V_{h}=H_{0}^{1}(\Omega) \cap P_{r}(\mathcal{T}_{h})$ and $\overline{V}_{h} = P_{r}(\mathcal{T}_{h})$ 
with integer $r \geq 2$. We also define the piecewise Sobolev space with respect to the mesh $\mathcal{T}_{h}$: 
\begin{align*}
& W^{s,p}(\mathcal{T}_{h}):= \boldsymbol{\Pi}_{K \in \mathcal{T}_{h}} W^{s,p}(K), \qquad 
W_{h}^{p}:= W^{2,p}(\mathcal{T}_{h}) \cap W_{0}^{1,p}(\Omega),  
\qquad W_{h}^{s,p}(D):= W^{s,p}(\mathcal{T}_{h})|_{D}.
\end{align*}
For a given subdomain $D \subset \Omega$. we define $V_{h}(D) \subset V_{h}$ and $W_{h}^{p}(D) \subset W_{h}^{p}(D)$ 
as the subspaces which vanish outside of $D$ by 
\begin{align*}
V_{h}(D):= \{ v \in V_{h}: v_{h}|_{\Omega \setminus D} = 0 \}, \qquad 
W_{h}^{p}(D):= \{ v \in W_{h}^{p}: v|_{\Omega \setminus D} = 0 \}.
\end{align*}

Associated with $D \subset \Omega$, we define a semi-norm on $W_{h}^{2,p}(D)$ for $p \in [1, +\infty]$: 
\begin{align}
\label{discrete_second_order_norm}
\Vert v\Vert_{W_{h}^{2,p}(D)} = \Vert D_{h}^{2}v\Vert_{L^{p}(D)} 
+ \left( \Sigma_{F \in \mathcal{F}_{h}^{I}} h_{F}^{1-p}\Vert [ \nabla v] \Vert_{L^{p}(F\cap 
\overline{D})}^{p} \right)^{\frac{1}{p}}.
\end{align}
Here $D_{h}^{2}v$ denotes the element-wise Hessian matrix of $v$. Obviously, 
\begin{align*}
& \Vert v\Vert_{W_{h}^{2,p}(\Omega)} = \Vert D_{h}^{2}v\Vert_{L^{p}(\Omega)} 
+ \left( \Sigma_{F \in \mathcal{F}_{h}^{I}} h_{F}^{1-p}\Vert [ \nabla v] 
\Vert_{L^{p}(F)}^{p} \right)^{\frac{1}{p}}, \\
& \Vert v\Vert_{W_{h}^{2,+\infty}(\Omega)} = \Vert D_{h}^{2}v\Vert_{L^{\infty}(\Omega)} 
+ \max_{F \in \mathcal{F}_{h}^{I}}h_{F}^{-1}\Vert [ \nabla v]\Vert_{L^{\infty}(F)}.
\end{align*}

For any subdomain $D\subset \Omega$ whose inner radius bigger than $2h$ and any $w \in L_{h}^{p}(D)$, we 
introduce the following mesh-dependent semi-norm 
\begin{align}
\label{discrete_zero_order_norm}
\Vert w\Vert_{L_{h}^{p}(\Omega)}:= \sup_{v_{h} \in V_{h}(D)} \dfrac{(w, v_{h})_{D}}{\Vert v_{h}\Vert_{L^{p^{\prime}}(D)}},
\end{align} 
where $\frac{1}{p} + \frac{1}{p^{\prime}} = 1$.

For any $F = \partial K^{+}\cap \partial K^{-} \in \mathcal{F}_{h}^{I}$, we denote by $\boldsymbol{n}_{F}$ 
the unit outward normal vector pointing in the direction of the element with the smaller global index.  
Therefore $\boldsymbol{n}_{F} = \boldsymbol{n}_{K^{+}} = - \boldsymbol{n}_{K^{-}}$. For $F \in \mathcal{F}_{h}^{B}$, 
we define $\boldsymbol{n}_{F}$ to be the unit outward normal vector to $\partial\Omega$ restricted to $F$.

\begin{definition}
\label{def_discrete_deri}
We denote by $W^{1,1}(\mathcal{T}_{h}):= \{\phi \in L^{1}(\Omega): \phi|_{K} \in W^{1,1}(K), \forall K \in\mathcal{T}_{h}\}$.  
We define the discrete partial derivatives $\overline{\partial}_{h, x_{i}}: W^{1,1}(\mathcal{T}_{h}) \rightarrow 
\overline{V}_{h}$ by 
\begin{equation}
	\label{discrete_deri}
	(\overline{\partial}_{h,x_{i}}v,\varphi_{h})_{\mathcal{T}_{h}}=\left\langle \{ v \} n^{(i)}, [\varphi_{h}] 
	\right\rangle_{\mathcal{F}_{h}}-(v,\partial_{x_{i}}\varphi_{h})_{\mathcal{T}_h}, \qquad 
	\forall \varphi_{h} \in \overline{V}_{h}.  
\end{equation} 
Here $v \in W^{1,1}(\mathcal{T}_{h})$, $i \in \{1,\cdots, d\}$ and $\boldsymbol{n}_{F} 
= ( n_{F}^{(1)},\cdots, n_{F}^{(d)})^{\top}$ for any $F \in \mathcal{F}_{h}$. 
It is easy to see that 
\begin{align}
\label{discrete_deri_equivalent}
	(\overline{\partial}_{h,x_{i}}v,\varphi_{h})_{\mathcal{T}_{h}}=- \left\langle [ v ] n^{(i)}, \{\varphi_{h}\} 
	\right\rangle_{\mathcal{F}_{h}^{I}} + (\partial_{x_{i}}v,\varphi_{h})_{\mathcal{T}_h}, \qquad 
	\forall \varphi_{h} \in \overline{V}_{h}.
\end{align}

The discrete gradient operator $\overline{\nabla}_{h}: W^{1,1}(\mathcal{T}_{h}) \rightarrow [\overline{V}_{h}]^{d}$ 
is defined as 
\begin{align}
\label{discrete_grad}
\left( \overline{\nabla}_{h} v\right)_{i} = \overline{\partial}_{h, x_{i}}v, \qquad \forall i \in \{1,\cdots, d\}.
\end{align}
\end{definition}
We would like to point out that the discrete gradient operator (\ref{discrete_grad}) 
has been introduced in \cite{LakkisPryer2011}. It has been used in \cite{BHW2021,DednerPryer2022,FLW2022} as well.

\subsection{Finite element method for linear elliptic PDE in non-divergence form and HJB equation}  
\label{sec_fem}

\subsubsection{FEM for linear elliptic PDE in non-divergence form} 

We define a function $\gamma$ on $\Omega$ to be 
\begin{align}
\label{def_gamma}
\gamma(\boldx) = \dfrac{\text{Tr}A(\boldx)}{\vert A(\boldx)\vert^{2}}
 = \dfrac{\Sigma_{1\leq i \leq d}a_{ii}(\boldx)}{\Sigma_{1\leq i,j\leq d}(a_{ij}(\boldx))^{2}}, 
\qquad \forall \boldx \in \Omega.
\end{align}
%Since $A$ is uniformly elliptic and uniformly bounded in $\Omega$,  
%$\gamma$ is uniformly bounded from below and above by positive constants.

Obviously, the linear elliptic PDE in non-divergence form (\ref{nondiv_pde_original}) is equivalent to 
\begin{align}
\label{nondiv_pde}
\tilde{A}:D^{2}u + \tilde{\boldsymbol{b}}\cdot \nabla u 
+ \tilde{c}u = \tilde{f}  \text{ in } \Omega, \qquad
u = 0 \text{ on } \partial\Omega,
\end{align}
where $\tilde{A} = \gamma A$, $\tilde{\boldsymbol{b}} = \gamma
\boldsymbol{b}$, $\tilde{c} = \gamma c$ and $\tilde{f} = \gamma f$.

The finite element method for linear elliptic PDE in non-divergence form (\ref{nondiv_pde_original}) is to find 
$u_{h} \in V_{h}$ satisfying 
\begin{align}
\label{nondiv_fem}
& \int_{\Omega}(\tilde{A}: \overline{\nabla}_{h}(\nabla u_{h}) + \tilde{\boldsymbol{b}}\cdot \nabla u_{h} 
+ \tilde{c}u_{h} ) v_{h} d\boldx \\
\nonumber 
= & \int_{\Omega} \gamma (A: \overline{\nabla}_{h}(\nabla u_{h}) + \boldsymbol{b}\cdot \nabla u_{h} 
+ cu_{h} ) v_{h} d\boldx 
= \int_{\Omega} \gamma fv_{h} d\boldx = \int_{\Omega} \tilde{f}v_{h} d\boldx, \qquad \forall v_{h} \in V_{h}.
\end{align}
%Here, $\tilde{A}$ and $\tilde{f}$ are defined in (\ref{nondiv_pde}).

\subsubsection{FEM for the Hamilton-Jacobi-Bellman equation}

For any $\alpha \in \Lambda$, we define a function $\gamma^{\alpha}$ 
on $\Omega$ to be
\begin{align}
\label{def_gamma_alpha}
\gamma^{\alpha}(\boldx) = \dfrac{\text{Tr}A^{\alpha}(\boldx)}{\vert A^{\alpha}(\boldx)\vert}
=\dfrac{\Sigma_{1 \leq i \leq d}  a_{ii}^{\alpha}(\boldx) }
{\Sigma_{1\leq i,j \leq d} (a_{ij}^{\alpha}(\boldx))^{2}},\qquad \forall \boldx \in \Omega.
\end{align} 

The finite element method for the Hamilton-Jacobi-Bellman equation (\ref{hjb_eqs_original}) is 
to find $u_{h}\in V_{h}$ satisfying 
\begin{align}
\label{hjb_fem}
(\sup_{\alpha\in \Lambda}\gamma^{\alpha}[A^{\alpha}:\overline{\nabla}_{h}
(\nabla u_{h}) + \boldsymbol{b}^{\alpha}\cdot\nabla u_{h} 
+ c^{\alpha}u_{h} - f^{\alpha}], v_{h})_{\Omega} = 0, 
\qquad \forall v_{h} \in V_{h}.
\end{align}
%Here $\gamma^{\alpha}$ is introduced in (\ref{def_gamma_alpha}).

\subsection{Preliminary results}
\label{sec_preliminary}

\begin{lemma}
\label{lemma_Poisson_regularity}
If the domain $\Omega$ is a convex polyhedra in $\mathbb{R}^{d}$ ($d=2,3$), 
let $p \in (1,2]$. If the domain is a Lipschitz polygon in $\mathbb{R}^{2}$, 
let $p \in (1,\frac{4}{3} + \epsilon_{2})$ where $\epsilon_{2}>0$ depends only on 
$\Omega$. Then for any $g \in L^{p}(\Omega)$, 
there is a unique solution $w \in W^{2,p}(\Omega) \cap W_{0}^{1,p}(\Omega)$ such that 
$\Delta w = g$ in $\Omega$ and 
\begin{align}
\label{Poisson_regularity_ineq1}
\Vert w\Vert_{W^{2,p}(\Omega)} \leq \underline{C}_{p} \Vert g\Vert_{L^{p}(\Omega)}.
\end{align}
Here the constant $\underline{C}_{p}$ may depend on $p$.

If the domain $\Omega$ is a convex polyhedra in $\mathbb{R}^{d}$ ($d=2,3$), 
let $p \in (1,+\infty)$. If the domain is a Lipschitz polygon in $\mathbb{R}^{2}$, 
let $p \in (\frac{4}{3} - \epsilon_{1}, 3 + \epsilon_{3})$ where $\epsilon_{1}, 
\epsilon_{3}>0$ depend only on $\Omega$. Then for any $g \in W^{-1,p}(\Omega)$, 
there is a unique solution $w \in W_{0}^{1,p}(\Omega)$ such that 
$\Delta w = g$ in $\Omega$ and 
\begin{align}
\label{Poisson_regularity_ineq2}
\Vert w\Vert_{W^{1,p}(\Omega)} \leq \underline{C}_{p}^{\prime} 
\Vert g\Vert_{W^{-1,p}(\Omega)}.
\end{align}
Here the constant $\underline{C}_{p}^{\prime}$ may depend on $p$.

In addition, if $\Omega$ is a convex polyhedra in $\mathbb{R}^{d}$ ($d=2,3$) 
and $g \in L^{2}(\Omega)$, we have 
\begin{align}
\label{Poisson_regularity_ineq3}
\Vert D^{2} w\Vert_{L^{2}(\Omega)} \leq \Vert g\Vert_{L^{2}(\Omega)}.
\end{align}
\end{lemma}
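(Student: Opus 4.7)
\smallskip

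\noindent\textbf{Proof proposal.} The plan is to obtain each of the three estimates by citing and assembling classical regularity results for the Dirichlet Laplacian on polyhedral and Lipschitz domains, since the lemma collects standard facts rather than new ones. I would organize the argument in the order $p=2$, then $W^{2,p}$ for $p\in(1,2]$, then $W^{1,p}$, and finally the non-convex planar case.

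First I would dispose of \eqref{Poisson_regularity_ineq3}: on any bounded convex domain the Miranda--Talenti inequality gives $\|D^{2}w\|_{L^{2}(\Omega)}\le\|\Delta w\|_{L^{2}(\Omega)}$ with constant exactly $1$. For smooth convex domains this is a direct integration-by-parts computation using that the second fundamental form of $\partial\Omega$ is nonnegative; for a convex polyhedron one exhausts $\Omega$ by smooth convex subdomains $\Omega_{n}\nearrow\Omega$, applies the inequality to $w_{n}:=$ Poisson extension of $g\chi_{\Omega_n}$ on $\Omega_{n}$, and passes to the limit using the $H^{2}$-regularity statement (valid on convex polyhedra; Grisvard, Dauge). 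This simultaneously establishes \eqref{Poisson_regularity_ineq1} at the endpoint $p=2$ with $\underline{C}_{2}$ bounded in terms of a Poincar\'e constant.

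Next, for \eqref{Poisson_regularity_ineq1} in the range $1<p<2$ on a convex polyhedron, I would cite the results of Maz'ya--Rossmann and Dauge on elliptic boundary value problems in polyhedral domains: the eigenvalues of the operator pencils at the vertices and edges of a convex polyhedron all satisfy the gap condition ensuring $W^{2,p}$-solvability for every $p\in(1,2]$ (in fact, an interval $(1,p_{\max})$ strictly larger than $(1,2]$ exists, but we need only $p\le 2$). On a convex polygon in $\mathbb{R}^{2}$ this is already contained in Grisvard's treatise. For a non-convex Lipschitz polygon in $\mathbb{R}^{2}$, the singular exponents are $\pi/\omega_{j}$ where $\omega_{j}$ are the interior angles; the worst exponent is $\pi/\omega_{\max}>1/2$, and a standard Mellin/Kondratiev analysis shows that $W^{2,p}$-solvability holds precisely for $p$ such that $2-2/p<\pi/\omega_{\max}$, which yields an interval of the form $(1,\tfrac{4}{3}+\epsilon_{2})$ with $\epsilon_{2}$ determined by $\omega_{\max}$. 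This is \eqref{Poisson_regularity_ineq1} in the non-convex planar case.

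For \eqref{Poisson_regularity_ineq2} I would appeal to the Jerison--Kenig theorem: on a bounded Lipschitz domain in $\mathbb{R}^{d}$ the Dirichlet Laplacian is an isomorphism $W_{0}^{1,p}(\Omega)\to W^{-1,p}(\Omega)$ for $p$ in an open interval about $2$, and on a convex polyhedron the interval extends to all of $(1,\infty)$ (using either the results of Maz'ya--Rossmann on convex polyhedra or, in the planar case, the bound $\omega_{\max}\le\pi$ which eliminates the singular exponents below $1$). On a non-convex Lipschitz polygon the Kondratiev singularity analysis again yields the sharp window $(\tfrac{4}{3}-\epsilon_{1},3+\epsilon_{3})$, with the lower endpoint coming from the leading singular exponent $\pi/\omega_{\max}$ and its dual exponent. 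The hardest part of the whole proof is keeping track of the correct range of admissible $p$ in the non-convex polygon: ensuring that the $W^{2,p}$ window and the $W^{1,p}$ window are both open, that the critical number $\tfrac{4}{3}$ lies strictly inside both, and that the constants $\underline{C}_{p},\underline{C}_{p}'$ can be chosen uniformly on compact sub-intervals; I would obtain the latter from the continuity of the Mellin symbol in $p$ together with the open-mapping theorem.
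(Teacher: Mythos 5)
Your proposal is correct and in substance follows the same route as the paper: the lemma is a collection of standard Dirichlet-Laplacian regularity facts, and the paper likewise proves it purely by citation (Chiba--Saito for the planar $W^{2,p}$ estimate, Fromm for the convex three-dimensional $W^{2,p}$ and convex $W^{1,p}$ cases, Jerison--Kenig for the non-convex $W^{1,p}$ window, and Smears--S\"uli for \eqref{Poisson_regularity_ineq3}). Your assembly of the same facts from Grisvard/Dauge/Maz'ya--Rossmann corner-exponent analysis, Jerison--Kenig, and the Miranda--Talenti inequality with a convex exhaustion is an equivalent sourcing of identical results (only note that the window $(\tfrac{4}{3}-\epsilon_{1},3+\epsilon_{3})$ in \eqref{Poisson_regularity_ineq2} is not sharp for polygons, which is harmless since only containment is claimed).
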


\begin{proof}
The proof of existence of a unique solution and the estimate 
(\ref{Poisson_regularity_ineq1}) are due to 
\cite[Proposition~$2.1$]{ChibaSaito2019} ($d=2$) and 
\cite[($1.6$)]{Fromm1992} ($d=3$). 
The proof of existence of a unique solution and the estimate 
(\ref{Poisson_regularity_ineq2}) are due to 
\cite[Theorem~$0.5$]{JerisonKenig1995} (non-convex and $d=2$) and 
\cite[($1.5$)]{Fromm1992} (convex). 
The estimate (\ref{Poisson_regularity_ineq3}) 
is due to \cite[Lemma~$1$]{SmearsSuli2013}.
\end{proof}

According to \cite[Theorem~$3$ and Theorem~$4$]{CrouzeixThomee1987} (see \cite[Lemma~$7.3$]{Wahlbin1991} as well), 
we have the following Lemma~\ref{lemma_L2_proj_props} since we assume the mesh $\mathcal{T}_{h}$ is quasi-uniform. 
We would like to point out that the proof of \cite[Theorem~$4$]{CrouzeixThomee1987} can be easily extended to obtain 
(\ref{L2_proj_prop2}) in three dimensional domains.

\begin{lemma}
\label{lemma_L2_proj_props}
We denote by $P_{h}$ the standard $L^{2}$-orthogonal projection onto $V_{h}$. Then 
there is a constant $\tilde{C}_{1}$ such that for any $q \in [1, +\infty)$,
\begin{subequations}
\label{L2_proj_props}
\begin{align}
\label{L2_proj_prop1}
& \Vert P_{h} v\Vert_{L^{q}(\Omega)} \leq \tilde{C}_{1} \Vert v\Vert_{L^{q}(\Omega)} 
\qquad \forall v \in L^{q}(\Omega), \\
\label{L2_proj_prop2}
& \Vert P_{h} v\Vert_{W^{1,q}(\Omega)} \leq \tilde{C}_{1} \Vert v\Vert_{W^{1,q}(\Omega)} 
\qquad \forall v \in W_{0}^{1,p}(\Omega).
\end{align}
\end{subequations}
\end{lemma}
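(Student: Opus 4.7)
The plan is to establish (\ref{L2_proj_prop1}) first, using a local/weighted-$L^2$ argument to handle the delicate endpoint $q=\infty$, and then to deduce (\ref{L2_proj_prop2}) by a standard triangle-inequality argument involving a Scott--Zhang quasi-interpolant, the inverse inequality, and (\ref{L2_proj_prop1}).

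For (\ref{L2_proj_prop1}), the endpoint $q=2$ is immediate since $P_h$ is an orthogonal projection. I would then establish the endpoint $q=\infty$, following the approach of Crouzeix--Thom\'ee: introduce a smooth weight $\sigma_{x_0}(x)=(|x-x_0|^2+\kappa h^2)^{-d/2-\delta}$ with $\delta>0$ and $\kappa$ sufficiently large, and prove the weighted stability estimate
\begin{equation*}
\|\sigma_{x_0}^{1/2}P_h v\|_{L^2(\Omega)} \leq C\|\sigma_{x_0}^{1/2} v\|_{L^2(\Omega)},
\end{equation*}
with $C$ independent of $x_0$ and $h$. This is proved by testing the defining identity of $P_h$ against $\sigma_{x_0} P_h v$ and handling the commutator $[\sigma_{x_0},P_h]$ via a superapproximation estimate for $V_h$ on quasi-uniform meshes together with an inverse inequality; this is precisely where the quasi-uniformity assumption enters. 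Localising at $x_0$ and using the quasi-uniformity of $\mathcal{T}_h$ to compare the weighted norm on an element containing $x_0$ with the pointwise value yields the $L^\infty$ bound $\|P_h v\|_{L^\infty(\Omega)}\leq C\|v\|_{L^\infty(\Omega)}$. The Riesz--Thorin interpolation theorem between $q=2$ and $q=\infty$ provides (\ref{L2_proj_prop1}) for $q\in[2,\infty)$, while the range $q\in[1,2)$ follows by duality from the self-adjointness identity $(P_h v,w)=(v,P_h w)$ together with the already established stability for the conjugate exponent $q'\in(2,\infty]$. Extending the three-dimensional statement of (\ref{L2_proj_prop1}) to (\ref{L2_proj_prop2}) uses that the superapproximation machinery and the inverse/quasi-uniformity tools used in \cite{CrouzeixThomee1987} carry over verbatim to $d=3$.

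For (\ref{L2_proj_prop2}), let $I_h\colon W_0^{1,q}(\Omega)\to V_h$ be a Scott--Zhang quasi-interpolant, for which the standard bounds
\begin{equation*}
\|I_h v\|_{W^{1,q}(\Omega)}\leq C\|v\|_{W^{1,q}(\Omega)},\qquad \|v-I_h v\|_{L^{q}(\Omega)}\leq Ch\|v\|_{W^{1,q}(\Omega)}
\end{equation*}
hold on quasi-uniform meshes. Since $I_h v\in V_h$, we have $P_h(I_h v)=I_h v$, so $P_h v-I_h v = P_h(v-I_h v)\in V_h$. The inverse inequality on the quasi-uniform mesh $\mathcal{T}_h$ and the just-established $L^q$-stability yield
\begin{equation*}
\|\nabla(P_h v-I_h v)\|_{L^q(\Omega)}\leq Ch^{-1}\|P_h(v-I_h v)\|_{L^q(\Omega)}\leq Ch^{-1}\|v-I_h v\|_{L^q(\Omega)}\leq C\|v\|_{W^{1,q}(\Omega)}.
\end{equation*}
Combining this with the $W^{1,q}$-stability of $I_h$ and with the $L^q$-bound on $P_h v$ itself furnished by (\ref{L2_proj_prop1}) yields (\ref{L2_proj_prop2}).

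The main obstacle is the weighted $L^2$ estimate underlying the $q=\infty$ endpoint of (\ref{L2_proj_prop1}): it rests on a careful superapproximation/localisation argument that is sensitive to the geometric regularity of the mesh and requires quasi-uniformity in a quantitative way. Once the $L^\infty$-stability of $P_h$ is available, interpolation, duality, and the quasi-interpolation/inverse-inequality step for (\ref{L2_proj_prop2}) are routine.
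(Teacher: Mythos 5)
Your proposal is correct and amounts to essentially the same approach as the paper, whose proof of this lemma is simply a citation of \cite[Theorems~3 and 4]{CrouzeixThomee1987} (and \cite[Lemma~7.3]{Wahlbin1991}) for quasi-uniform meshes: the weighted-$L^{2}$/superapproximation argument for the $L^{\infty}$ endpoint, Riesz--Thorin interpolation and duality for the remaining exponents, and the quasi-interpolant plus inverse-inequality step for the gradient bound are exactly the ingredients behind those cited results. Your Scott--Zhang argument for (\ref{L2_proj_prop2}), which needs only the already-proved $L^{q}$ stability, also furnishes in a dimension-independent way the three-dimensional extension that the paper merely asserts can be ``easily obtained''.
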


According to \cite[Lemma~$4.3$]{ErnGuermond2017}, we have the following Lemma~\ref{lemma_H1_interpolation}.
\begin{lemma}
\label{lemma_H1_interpolation} 
For any $\overline{v}_{h} \in \overline{V}_{h}$, we denote by $\chi_{h} \in H^{1}(\Omega)\cap 
\overline{V}_{h}$ the standard $H^{1}$-conforming averaging of $\overline{v}_{h}$. Then 
there is a constant $C$ such that for any $q \in [1,+\infty]$, we have  
\begin{align*}
\Vert \chi_{h} - \overline{v}_{h} \Vert_{L^{q}(\Omega)} 
\leq C h \left( \Sigma_{F \in \mathcal{F}_{h}^{I}} h_{F}^{1-q}\Vert [ \overline{v}_{h}] \Vert_{L^{q}(F)}^{q}
 \right)^{\frac{1}{q}}.
\end{align*}
Here the domain $\Omega$ can be any Lipschitz polyhedra in $\mathbb{R}^{d}$ ($d=2,3$).
\end{lemma}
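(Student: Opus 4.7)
The approach is to construct $\chi_h$ by the classical Oswald nodal-averaging procedure and then perform an element-by-element scaling argument, in the spirit of the cited reference.

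First, I would fix the Lagrange basis for $P_r$ on each element. For every Lagrange node $z$ shared by more than one element, define $\chi_h(z)$ as the arithmetic mean of $\overline{v}_h|_K(z)$ over all $K$ containing $z$; for nodes lying in the closure of a single element, set $\chi_h(z) := \overline{v}_h(z)$. The resulting $\chi_h$ is single-valued at every Lagrange node, and since interelement continuity within the Lagrange basis is determined by face-node values, we have $\chi_h \in H^1(\Omega)\cap \overline{V}_h$.

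Next, fix $K \in \mathcal{T}_h$ and set $\phi_K := (\chi_h - \overline{v}_h)|_K \in P_r(K)$. For any Lagrange node $z$ of $K$,
$$\phi_K(z) \;=\; \frac{1}{|\mathcal{T}_z|}\sum_{K'\in \mathcal{T}_z}\bigl(\overline{v}_h|_{K'}(z)-\overline{v}_h|_K(z)\bigr),$$
where $\mathcal{T}_z$ denotes the set of elements containing $z$. Chaining face-adjacent elements within $\mathcal{T}_z$, each inner difference becomes a uniformly bounded linear combination of face-jump values $[\overline{v}_h](z)$ over interior faces $F \in \mathcal{F}_h^I$ touching $z$. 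Quasi-uniformity of $\mathcal{T}_h$ (Section~\ref{sec_notation}) bounds both $|\mathcal{T}_z|$ and the chain lengths by constants depending only on the mesh-regularity parameters.

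Now scale to the reference simplex. Because all norms on $P_r$ are equivalent there, for each $K$ and each face $F \subset \partial K$ with $F \in \mathcal{F}_h^I$ and each node $z \in F$,
$$\|\phi_K\|_{L^q(K)}^q \;\lesssim\; h_K^{d}\sum_{z\in \mathcal{N}(K)}|\phi_K(z)|^q, \qquad |[\overline{v}_h](z)|^q \;\lesssim\; h_F^{1-d}\,\|[\overline{v}_h]\|_{L^q(F)}^q.$$
Inserting the face-jump expansion of $\phi_K(z)$, collapsing mesh-size powers via $h_K\simeq h_F\simeq h$ (quasi-uniformity), and summing over $K\in \mathcal{T}_h$ (with a mesh-regular bound on the number of elements adjacent to any given face) yields
$$\|\chi_h - \overline{v}_h\|_{L^q(\Omega)}^{q} \;\lesssim\; h^{q}\sum_{F\in \mathcal{F}_h^I} h_F^{1-q}\,\|[\overline{v}_h]\|_{L^q(F)}^{q},$$
which is the stated bound after taking the $q$-th root. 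The case $q=+\infty$ follows by the analogous argument with maxima in place of sums.

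The content is essentially bookkeeping, so the main obstacle is the combinatorial step of verifying that the telescoping expansion of $\phi_K(z)$ as a sum of face jumps is well-defined with a uniformly bounded number of terms; this is most delicate at vertex or edge nodes in $\mathbb{R}^{3}$ where many elements meet at a single node, and is where the quasi-uniformity assumption on $\mathcal{T}_h$ is crucial.
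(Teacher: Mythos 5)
Your argument is correct. The paper itself does not write out a proof of this lemma: it simply invokes \cite[Lemma~4.3]{ErnGuermond2017}, and your Oswald nodal-averaging construction together with the node-to-face scaling (norm equivalence on $P_r$, telescoping of nodal differences over face-connected chains in the node star, and the inverse estimate $|[\overline{v}_h](z)|^q\lesssim h_F^{1-d}\Vert[\overline{v}_h]\Vert_{L^q(F)}^q$) is precisely the standard proof of that cited averaging-operator estimate, so in substance you are reproducing the same approach in self-contained form. The only point worth making explicit is that pulling the single factor $h$ out of the sum (rather than keeping $h_F^{1/q}$ face by face) uses the quasi-uniformity assumed in Section~\ref{sec_notation}, which you do note.
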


\begin{lemma}
\label{lemma_discrete_norm_control}
There is a constant $C$ such that for any $q\in [1,+\infty]$, we have 
\begin{align*}
\Vert \nabla v_{h} \Vert_{L^{q}(\Omega)} \leq C \Vert v_{h}\Vert_{W_{h}^{2,q}(\Omega)}, 
\qquad \forall v_{h} \in V_{h}.
\end{align*}
Here the domain $\Omega$ can be any Lipschitz polyhedra in $\mathbb{R}^{d}$ ($d=2,3$).
\end{lemma}

\begin{proof}
We choose $v_{h}\in V_{h}$ and $q\in [1,+\infty]$ arbitrarily. We denote by $C$ a constant independent 
of $v_{h}$ and $q \in [1,+\infty]$. According to Lemma~\ref{lemma_H1_interpolation}, there is 
$\boldsymbol{\psi}_{h}\in [H^{1}(\Omega)\cap P_{r}(\mathcal{T}_{h})]^{d}$ such that 
\begin{align}
\label{H1_interpolation_ineq1}
\Vert \boldsymbol{\psi}_{h} - \nabla v_{h} \Vert_{L^{q}(\Omega)} 
\leq C h \left( \Sigma_{F \in \mathcal{F}_{h}^{I}} h_{F}^{1-q}\Vert [ \nabla v_{h}] 
\Vert_{L^{q}(F)}^{q} \right)^{\frac{1}{q}}.
\end{align}  

Since the proof for the case $d=2$ is similar but simpler, we assume $d=3$ in the following. 
Since $v_{h} = 0$ on $\partial\Omega$, $\boldsymbol{n}\times (\nabla v_{h}) = \boldsymbol{0}$ on $\partial\Omega$. 
Then by (\ref{H1_interpolation_ineq1}), we have 
\begin{align}
\label{H1_interpolation_ineq2}
& \Vert \boldsymbol{n} \times \boldsymbol{\psi}_{h} \Vert_{L^{q}(\partial\Omega)} 
= \Vert \boldsymbol{n} \times (\boldsymbol{\psi}_{h}-\nabla v_{h} )\Vert_{L^{q}(\partial\Omega)} \\
\nonumber
\leq & C h^{1-\frac{1}{q}}\left( \Sigma_{F \in \mathcal{F}_{h}^{I}} h_{F}^{1-q}\Vert [ \nabla v_{h}] 
\Vert_{L^{q}(F)}^{q} \right)^{\frac{1}{q}} \leq C \Vert v_{h}\Vert_{W_{h}^{2,q}(\Omega)}.
\end{align}

Let $\{ \psi_{h}^{(i)} \}_{i=1}^{3}$ be the collection of all components of $\boldsymbol{\psi}_{h}$.
Since the domain $\Omega$ is a polyhedra, we can assume that there is a face $\tilde{F}$ on $\partial\Omega$ 
which is parallel to both $x_{1}$-axis and $x_{2}$-axis. Then by (\ref{H1_interpolation_ineq2}), we have 
\begin{align}
\label{H1_interpolation_ineq3}
\Vert \psi_{h}^{(1)} \Vert_{L^{q}(\tilde{F})} + \Vert \psi_{h}^{(2)} \Vert_{L^{q}(\tilde{F})} 
\leq C \Vert v_{h}\Vert_{W_{h}^{2,q}(\Omega)}.
\end{align}

In the following, we will prove
\begin{align}
\label{H1_interpolation_ineq4}
\Vert \psi_{h}^{(i)} \Vert_{L^{q}(\Omega)} \leq C \left( \Vert \psi_{h}^{(i)} \Vert_{L^{q}(\tilde{F})} 
+ \Vert \nabla \psi_{h}^{(i)} \Vert_{L^{q}(\Omega)} \right), \qquad \forall i\in \{1,2\}.
\end{align}
In fact, if (\ref{H1_interpolation_ineq4}) holds, then (\ref{H1_interpolation_ineq1},\ref{H1_interpolation_ineq3}) imply 
$\Vert \dfrac{\partial v_{h}}{\partial x_{1}} \Vert_{L^{q}(\Omega)} 
+ \Vert \dfrac{\partial v_{h}}{\partial x_{2}} \Vert_{L^{q}(\Omega)} 
\leq C \Vert v_{h}\Vert_{W_{h}^{2,q}(\Omega)}$.
Then by applying the same argument to another face on $\partial\Omega$ which is not parallel to $\tilde{F}$, we will have 
$\Vert \nabla v_{h} \Vert_{L^{q}(\Omega)} \leq C \Vert v_{h}\Vert_{W_{h}^{2,q}(\Omega)}$.

Therefore, we only need to prove (\ref{H1_interpolation_ineq4}). For the sake of simplicity, we will only prove 
(\ref{H1_interpolation_ineq4}) for $i=1$. We decompose the domain $\Omega$ into finitely many non-overlapping simplexes 
$\{ T_{i} \}_{i=1}^{N}$, such that $\overline{\Omega} = \cup_{i=1}^{N} \overline{T}_{i}$. We assume this decomposition 
of $\Omega$ is conforming such that it is like a conforming mesh of $\Omega$.
We would like to point out that this decomposition of $\Omega$ is not relevant to the mesh $\mathcal{T}_{h}$, such that 
$N$ is independent of $h$. If we take an element $K\in\mathcal{T}_{h}$ arbitrarily, $K$ may have non-trivial overlapping 
with two or more simplexes among $\{ T_{i} \}_{i=1}^{N}$.

Without losing of generality, we assume that a triangular face $F_{1}$ of $T_{1}$ is contained in $\tilde{F}$.
Let $T$ be an arbitrary simplex among $\{ T_{i} \}_{i=2}^{N}$. By relabelling $\{ T_{i} \}_{i=2}^{N}$, there is a 
positive integer $1< \tilde{N} \leq N$ such that $\partial T_{1} \cap \partial T_{2} = F_{2}$, 
$\partial T_{2} \cap \partial T_{3} = F_{3}$,$\cdots$,$\partial T_{\tilde{N}-1}\cap \partial T_{\tilde{N}} = F_{\tilde{N}}$ 
where $F_{2},\cdots,F_{\tilde{N}}$ are triangular faces.
We claim that for any $1\leq i \leq N$,
\begin{align}
\label{H1_interpolation_ineq5} 
\Vert \psi_{h}^{(1)} \Vert_{L^{q}(T_{i})} + \Vert \psi_{h}^{(1)} \Vert_{L^{q}(F_{i}^{\prime\prime})} 
\leq C \left( \Vert \psi_{h}^{(1)} \Vert_{L^{q}(F_{i}^{\prime})} 
+ \Vert \nabla \psi_{h}^{(1)} \Vert_{L^{q}(T_{i})} \right),
\end{align}
where $F_{i}^{\prime}$ and $F_{i}^{\prime\prime}$ can be any two triangular faces of $T_{i}$. 
If (\ref{H1_interpolation_ineq5}) holds, then (\ref{H1_interpolation_ineq4}) is true for $i=1$ 
because (\ref{H1_interpolation_ineq3}) holds and $N$ is independent of $h$.

Therefore, we only need to prove (\ref{H1_interpolation_ineq5}). Since $\psi_{h}^{(1)} \in H^{1}(\Omega)\cap 
P_{r}(\mathcal{T}_{h})$, $\psi_{h}^{(1)}$ is globally Lipschitz on $\overline{\Omega}$. So 
$\psi_{h}^{(1)}$ is globally Lipschitz on $\overline{T_{i}}$ for any $1\leq i \leq N$. 
We notice that the Lipschitz constant of $\psi_{h}^{(i)}$ on $\overline{T}_{i}$ is bounded by 
(independent of $h$) $\Vert \nabla \psi_{h}^{(i)} \Vert_{L^{\infty}(T_{i})}$ for any $1\leq i \leq N$. 
So (\ref{H1_interpolation_ineq5}) holds for $q = + \infty$. 

Now we are going to prove (\ref{H1_interpolation_ineq5}) for $q \in [1,+\infty)$. Via affine transformation, 
we can assume $T_{i}$ is the simplex determined by the four vertexes $(0,0,0)$, $(1,0,0)$, $(0,1,0)$ and $(0,0,1)$.
We further assume $F_{i}^{\prime}$ is the triangular face through $(0,0,0)$, $(1,0,0)$ and $(0,1,0)$, 
and $F_{i}^{\prime\prime}$ is the triangular face through $(1,0,0)$, $(0,1,0)$ and $(0,0,1)$. 
Since $\psi_{h}^{(i)}$ is globally Lipschitz on $\overline{T}_{i}$, we have that for any $(x_{1},x_{2},x_{3})
\in \overline{T}_{i}$, 
\begin{align}
\label{H1_interpolation_ineq6}
\psi_{h}^{(1)}(x_{1},x_{2},x_{3}) = \psi_{h}^{(1)}(x_{1},x_{2},0) 
+ \int_{0}^{x_{3}} \dfrac{\partial \psi_{h}^{(1)}}{\partial x_{3}}(x_{1},x_{2},s) ds.
\end{align}
Since $\psi_{h}^{(1)} \in H^{1}(\Omega)\cap P_{r}(\mathcal{T}_{h})$, we have that for any 
$(x_{1},x_{2},0) \in \overline{F}_{i}^{\prime}$, $\dfrac{\partial \psi_{h}^{(1)}}{\partial x_{3}}(x_{1},x_{2},s)$ 
on the right hand side of (\ref{H1_interpolation_ineq6}) is the same as the weak derivative of 
$\psi_{h}^{(i)}$ with respect to $x_{3}$ on the line segment $l(x_{1},x_{2}):=\{ (x_{1},x_{2},s): 0< s < 1 - x_{1} - x_{2} \}$ 
except at most finitely many points. In fact, these points are intersections between the line segment $l(x_{1},x_{2})$ 
and interior mesh interfaces in $\mathcal{F}_{h}^{I}$ which are not parallel to $l(x_{1},x_{2})$.  
Therefore, (\ref{H1_interpolation_ineq6}) implies that for any $q \in [1,+\infty)$, 
\begin{align*}
& \Vert \psi_{h}^{(1)} \Vert_{L^{q}(F_{i}^{\prime\prime})} \leq  
\left(\sqrt{3} \int_{F_{i}^{\prime}} \vert \psi_{h}^{(1)}(x_{1},x_{2},1-x_{1}-x_{2}) \vert^{q}dx_{1}dx_{2} 
\right)^{\frac{1}{q}} \\
\leq & \sqrt{3} \left(\int_{F_{i}^{\prime}} \vert \psi_{h}^{(1)}(x_{1},x_{2},1-x_{1}-x_{2}) 
\vert^{q}dx_{1}dx_{2} \right)^{\frac{1}{q}}
\leq C \left( \Vert \psi_{h}^{(1)} \Vert_{L^{q}(F_{i}^{\prime})} + \Vert \nabla \psi_{h}^{(1)}\Vert_{L^{q}(T_{i})}\right), \\
& \Vert \psi_{h}^{(1)} \Vert_{L^{q}(T_{i})} \leq C \left( \Vert \psi_{h}^{(1)} \Vert_{L^{q}(F_{i}^{\prime})} 
+ \Vert \nabla \psi_{h}^{(1)}\Vert_{L^{q}(T_{i})}\right).
\end{align*}

Therefore, we have that (\ref{H1_interpolation_ineq5}) for $q \in [1,+\infty]$. 
We can conclude that the proof is complete.
\end{proof}

\section{Linear elliptic PDE in non-divergence form} 
\label{sec_nondiv}

For the linear elliptic PDE in non-divergence form (\ref{nondiv_pde_original}), 
the main theoretical results are Theorem~\ref{thm_A_continuous_complete}, 
Theorem~\ref{thm_pde_cordes} and Theorem~\ref{thm_global_estimate_cordes}. 

Theorem~\ref{thm_A_continuous_complete} 
provides the well-posedness of strong solution in $W^{2,p}(\Omega)$ and optimal convergence 
with respect to $W_{h}^{2,p}$-norm (see (\ref{discrete_second_order_norm})) of the numerical 
solution of the FEM (\ref{nondiv_fem}), when the coefficient matrix 
$A \in C^{0}(\overline{\Omega})$. 

Theorem~\ref{thm_pde_cordes} and Theorem~\ref{thm_global_estimate_cordes} provides 
well-posedness of strong solution in $W^{2,p}(\Omega)$ of (\ref{nondiv_pde_original}) and 
optimal convergence with respect to $W_{h}^{2,p}$-norm of the numerical 
solution of the FEM (\ref{nondiv_fem}), in the setting where the coefficient matrix $A$
is discontinuous but $\gamma A$ is dominated by $I_{d}$, where the positive weight function 
$\gamma$ is introduced in (\ref{def_gamma}).
Since they are special examples of Theorem~\ref{thm_hjb_pde_wellposedness} and Theorem~\ref{thm_hjb_conv} 
respectively, the proofs of Theorem~\ref{thm_pde_cordes} and Theorem~\ref{thm_global_estimate_cordes} are skipped.
Please refer to those of Theorem~\ref{thm_hjb_pde_wellposedness} and Theorem~\ref{thm_hjb_conv}.

\subsection{Global $W^{2,p}$ estimate of linear elliptic PDE in non-divergence form 
with uniformly continuous $A$}  

\begin{lemma}
\label{lemma_pde_unique1}
(Uniqueness of linear elliptic PDE in non-divergence form)
Let $A \in [C^{0}(\overline{\Omega})]^{3\times 3}$ uniformly elliptic, $\boldsymbol{b}\in 
[L^{\infty}(\Omega)]^{3}$, $c \in L^{\infty}(\Omega)$ with $c \leq 0$, 
and $\Omega$ is an open bounded convex domain in $\mathbb{R}^{3}$. 
Let $1 < p \leq \frac{3}{2}$. 
If $\phi \in W^{2,p}(\Omega) \cap W_{0}^{1,p}(\Omega)$ such that 
\begin{align*}
A:D^{2} \phi + \boldsymbol{b}\cdot \nabla \phi + c \phi = 0 \text{ in } \Omega,
\end{align*}
then $\phi = 0$ in $\Omega$.
\end{lemma}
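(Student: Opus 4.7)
The plan is to upgrade the local regularity of $\phi$ far enough that a strong maximum principle becomes applicable, and then to convert the distributional boundary condition $\phi \in W_{0}^{1,p}(\Omega)$ into pointwise information via quasi-continuity. The fundamental difficulty, as the paper stresses, is that $W^{2,p}(\Omega) \not\hookrightarrow C(\overline{\Omega})$ for $1 < p \leq \frac{3}{2}$ in three dimensions, so the Aleksandrov--Bakelman--Pucci estimate cannot be applied to $\phi$ directly.

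First I would upgrade the interior integrability. Since $A$ is uniformly continuous on $\overline{\Omega}$, each entry belongs to $\mathrm{VMO}(\Omega)$, so the interior $W^{2,q}$ estimates of Chiarenza--Frasca--Longo (or Krylov's version for continuous coefficients) yield $\phi \in W^{2,q}_{\mathrm{loc}}(\Omega)$ for every $q \in (1,\infty)$. In particular $\phi \in C(\Omega) \cap W^{2,d}_{\mathrm{loc}}(\Omega)$ with $d=3$, so it is a strong pointwise solution of $A:D^{2}\phi + \boldsymbol{b}\cdot\nabla\phi + c\phi = 0$ inside $\Omega$ and Bony's strong maximum principle (together with the ABP inequality) is available on any compactly contained subdomain.

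I would then argue by contradiction. Set $M := \sup_{\Omega}\phi$; applying the same argument to $-\phi$ it suffices to rule out $M > 0$. Take a sequence $x_{n} \in \Omega$ with $\phi(x_{n}) \to M$ and split on whether it clusters inside $\Omega$ or only on $\partial\Omega$. \emph{Case (i): interior cluster point $x_{\infty} \in \Omega$.} By continuity $\phi(x_{\infty}) = M > 0$, and since $c \leq 0$ the equation gives $A : D^{2}\phi + \boldsymbol{b}\cdot\nabla\phi = -c\phi \geq 0$ on $\{\phi > 0\}$. Bony's strong maximum principle then forces $\phi \equiv M$ on the connected set $\Omega$, contradicting the trace-zero condition $\phi \in W_{0}^{1,p}(\Omega)$. \emph{Case (ii): $\mathrm{dist}(x_{n},\partial\Omega)\to 0$.} Exhaust $\Omega$ by $\Omega_{\eta}:=\{x\in\Omega : \mathrm{dist}(x,\partial\Omega) > \eta\}$, on which Step~1 yields $\phi \in W^{2,q}(\Omega_{\eta})\cap C(\overline{\Omega_{\eta}})$ for every $q < \infty$; the classical ABP estimate applied on $\Omega_{\eta}$ gives $\sup_{\Omega_{\eta}} \phi \leq \sup_{\partial\Omega_{\eta}}\phi^{+}$. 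Extending $\phi$ by zero to a function in $W^{1,p}(\mathbb{R}^{3})$ and invoking the $p$-quasi-continuity of Sobolev functions, a precise representative of $\phi$ tends to zero as $x \to \partial\Omega$ outside a subset of $\partial\Omega$ of zero $p$-capacity, so $\sup_{\partial\Omega_{\eta}}\phi^{+} \to 0$ as $\eta \to 0$. Sending $\eta \to 0$ gives $M \leq 0$.

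The main obstacle is Case (ii): one has to translate the purely distributional boundary condition $\phi \in W_{0}^{1,p}(\Omega)$ into pointwise decay as $x \to \partial\Omega$, on a merely convex (and possibly polyhedral) domain where no $C^{1,1}$ boundary regularity is available. The key ingredients are the interior $W^{2,q}$ estimates from Step~1 (guaranteeing continuity of $\phi$ on each $\Omega_{\eta}$), the fine properties and quasi-continuity of Sobolev functions, and the Lipschitz regularity of $\partial\Omega$ (enough for the exhaustion and for the $p$-capacity argument). The interior portion of the argument (Bony's strong maximum principle) is standard; it is the packaging of the boundary behaviour that is delicate.
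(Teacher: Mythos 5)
Your Step~1 (interior $W^{2,q}_{\mathrm{loc}}$ regularity from continuity/VMO of $A$, hence $\phi\in C(\Omega)\cap W^{2,3}_{\mathrm{loc}}(\Omega)$ and availability of the interior maximum principle/ABP on compactly contained subdomains) is fine and is also present, in a different guise, in the paper. The genuine gap is in Case~(ii): the passage from $\phi\in W_{0}^{1,p}(\Omega)$ to $\sup_{\partial\Omega_{\eta}}\phi^{+}\to 0$ as $\eta\to 0$ does not follow from quasi-continuity. Quasi-continuity of the zero extension only gives continuity, and vanishing on $\Omega^{c}$, outside open sets of arbitrarily small $p$-capacity; but the surfaces $\partial\Omega_{\eta}$ meet those exceptional sets, and a supremum over the whole surface is exactly the quantity that an exceptional set of small (even zero) capacity can ruin. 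For $d=3$ and $1<p\leq\frac{3}{2}$ this is not a technicality: $W^{2,p}(\Omega)\cap W_{0}^{1,p}(\Omega)$ contains functions that are unbounded near a single boundary point (e.g.\ in a half-space model $x_{3}\vert x\vert^{-\alpha-1}$, suitably cut off, with $0<\alpha<\frac{3}{p}-2$), for which $\sup_{\partial\Omega_{\eta}}\phi^{+}$ does not tend to $0$ even though the $W^{1,p}$-trace vanishes. So the ABP bound on $\Omega_{\eta}$ cannot be closed this way, and the conclusion $M\leq 0$ is not reached; the distributional boundary condition simply carries no uniform pointwise decay at this integrability level.

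This is precisely the obstacle the paper's proof is built to remove: instead of working only with interior regularity, it improves the regularity of $\phi$ up to the boundary. Localizing with a cut-off around an arbitrary $\boldx_{0}\in\overline{\Omega}$, it writes the frozen-coefficient equation $A_{0}:D^{2}v=(A_{0}-A):D^{2}v+A:D^{2}v$ with $A_{0}=A(\boldx_{0})$, uses the global $W^{2,\tilde q}$ estimates for constant-coefficient problems on convex domains (Lemma~\ref{lemma_Poisson_regularity}, i.e.\ Fromm's convexity result, which needs no boundary flattening) and a contraction argument in $W^{2,\tilde q}(B_{2R_{0}}(\boldx_{0}))$ based on the uniform continuity of $A$, to conclude $\phi\in W^{2,q}(\Omega)$ with $q=\min(\frac{3p}{3-p},2)>\frac{3}{2}$. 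Only then does $\phi\in C^{0}(\overline{\Omega})$ with genuinely zero boundary values, and the ABP estimate (after the interior upgrade to $W^{2,3}_{\mathrm{loc}}$) finishes the proof. To repair your argument you would need some substitute for this up-to-the-boundary bootstrap; the capacity argument as stated cannot serve that role.
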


\begin{proof}
We choose $\boldx_{0} \in \overline{\Omega}$ arbitrarily. For any $R>0$, we denote by $\eta$ the cut-off function to be 
$1$ inside of $\mathcal{B}_{R}(\boldx_{0}):= \{ \boldx \in \mathbb{R}^{3}: \vert \boldx - \boldx_{0}\vert < R \}$ and 
to be $0$ outside of $\mathcal{B}_{2R}(\boldx_{0}):= \{ \boldx \in \mathbb{R}^{3}: \vert \boldx - \boldx_{0}\vert < 2R \}$.

We define $v = \eta \phi$. We notice that in $\Omega$, 
\begin{align*}
& A:D^{2}v = A:D^{2}(\eta \phi) \\ 
= & \eta A:D^{2}\phi + 2 A:(\nabla \eta \otimes \nabla \phi) + \phi A:D^{2} \eta \\ 
= & - \eta \left( \boldsymbol{b}\cdot \nabla \phi + c \phi \right)
+ 2 A:(\nabla \eta \otimes \nabla \phi) + \phi A:D^{2} \eta,
\end{align*}
since $A:D^{2}\phi + \boldsymbol{b}\cdot \nabla \phi + c \phi = 0$ in $\Omega$. 
Therefore, by Sobolev's embedding inequality, we have that 
\begin{align}
\label{improved_regularity1}
A:D^{2}v \in L^{q}(\Omega) \text{ with } q := \min (\dfrac{3p}{3-p}, 2) > \frac{3}{2}.
\end{align}
Obviously, $1 < p < q$.

We define $A_{0} = A(\boldx_{0}) \in \mathbb{R}^{3 \times 3}$. 
We define the linear mapping $M: L^{\tilde{q}}(\Omega) \rightarrow W^{2,\tilde{q}}(\Omega) 
\cap W_{0}^{1,\tilde{q}}(\Omega)$ by 
\begin{align*}
\nabla \cdot \left( A_{0} \nabla M(f) \right) = f,\qquad \forall f \in L^{\tilde{q}}(\Omega), 
\forall 1 < \tilde{q} \leq q.
\end{align*} 
By Lemma~\ref{lemma_Poisson_regularity}, 
the mapping $M$ is not only well-defined but also continuous 
for any $1 < \tilde{q} \leq q$.

It is easy to see that in $\Omega$
\begin{align*}
A_{0}:D^{2} v = (A_{0} - A):D^{2}v + A:D^{2}v.
\end{align*}
Since $1 < p < q$, we can 
apply the mapping $M$ on both sides of the above equation, to obtain that  
\begin{align*}
v = M \left( (A_{0} - A):D^{2}v \right) + M\left( A:D^{2} v \right) \text{ in } \Omega.
\end{align*}

We define $B_{2R}(\boldx_{0}):=\{ \boldx\in \Omega: \vert \boldx - \boldx_{0} \vert < 2R\} 
= \mathcal{B}_{2R}(\boldx_{0}) \cap \Omega$. We define a linear mapping 
$T : W^{2,\tilde{q}}(B_{2R}(\boldx_{0})) \rightarrow W^{2,\tilde{q}}(B_{2R}(\boldx_{0}))$ by 
\begin{align}
\label{map_T}
Tw = \left( M \tilde{w}\right)|_{B_{2R}(\boldx_{0})}
\end{align}
where $w \in W^{2,\tilde{q}}(B_{2R}(\boldx_{0}))$ and 
\begin{align}
\label{tilde_w_T}
\tilde{w} = (A_{0} - A):D^{2} w \text{ in } B_{2R}(\boldx_{0}),\qquad \tilde{w} = 0 \text{ in } 
\Omega \setminus \overline{B_{2R}(\boldx_{0})}.
\end{align}
Here $1 < \tilde{q} \leq q$. 
Therefore, for any $ 1 < \tilde{q} \leq q$, we have that 
\begin{align*}
& \Vert Tw \Vert_{W^{2,\tilde{q}}(B_{2R}(\boldx_{0}))} 
\leq C_{\tilde{q}} \Vert \tilde{w}\Vert_{L^{\tilde{q}}(\Omega)} \\
= & C_{\tilde{q}} \Vert (A_{0} - A):D^{2} w \Vert_{L^{\tilde{q}}(B_{2R}(\boldx_{0}))} \\
\leq & C_{\tilde{q}} \Vert A_{0} - A \Vert_{L^{\infty}(B_{2R}(\boldx_{0}))} 
\Vert w\Vert_{W^{2,\tilde{q}}(B_{2R}(\boldx_{0}))}, \qquad \forall w \in W^{2,\tilde{q}}(B_{2R}(\boldx_{0})).
\end{align*}
The constant $C_{\tilde{q}}$ is due to Lemma~\ref{lemma_Poisson_regularity}.

Since $A \in [C^{0}(\overline{\Omega})]^{3 \times 3}$, 
there is a $R_{0} > 0$ such that $\Vert T\Vert \leq \frac{1}{2}$ from $W^{2,\tilde{q}}(B_{2R_{0}}(\boldx_{0}))$ to 
$W^{2,\tilde{q}}(B_{2R_{0}}(\boldx_{0}))$ for all $p \leq \tilde{q} \leq q$.
By (\ref{improved_regularity1}) and Lemma~\ref{lemma_Poisson_regularity}, 
$M\left( A:D^{2} v \right) \in L^{\tilde{q}}(\Omega)$ 
for any $1 < \tilde{q} \leq q$. 
Therefore for any $ p \leq \tilde{q} \leq q$, there is a unique $w \in W^{2,\tilde{q}}(B_{2R_{0}}(\boldx_{0}))$ such that 
\begin{align}
\label{w_T_unique}
w = T w + M\left( A:D^{2} v \right)|_{B_{2R_{0}(\boldx_{0})}} \quad \text{ in }B_{2R_{0}(\boldx_{0})}.
\end{align}

On the other hand, since $v = 0$ in $\Omega \setminus \overline{B_{2R}(\boldx_{0})}$ 
and $v \in W^{2,p}(\Omega)$ ($1< p < q$), (\ref{map_T}) and (\ref{tilde_w_T}) imply that 
\begin{align*}
T (v|_{B_{2R_{0}}(\boldx_{0})}) = M\left( (A_{0} - A):D^{2} v \right)|_{B_{2R_{0}}(\boldx_{0})}.
\end{align*}
Then we have 
\begin{align}
\label{map_T_contraction}
& v|_{B_{2R_{0}}(\boldx_{0})} = M(A_{0}:D^{2}v)|_{B_{2R_{0}}(\boldx_{0})} \\ 
\nonumber 
= & T (v|_{B_{2R_{0}}(\boldx_{0})}) + M\left( A:D^{2} v \right)|_{B_{2R_{0}}(\boldx_{0})} 
\quad \text{ in } B_{2R_{0}}(\boldx_{0}).
\end{align} 
(\ref{w_T_unique}) and (\ref{map_T_contraction}) imply that $\phi |_{B_{R_{0}}(\boldx_{0})} 
\in W^{2,q}(B_{R_{0}}(\boldx_{0}))$. 

Since $\boldx_{0} \in \overline{\Omega}$ is chosen arbitrarily, 
we can conclude that $\phi \in W^{2,q}(\Omega)$ 
where $q = \min (\dfrac{3p}{3 - p} ,2) > \frac{3}{2}$. So $\phi \in C^{0}(\overline{\Omega})$. Furthermore, by the interior estimate in 
\cite[Lemma~$9.16$]{GT01}, we have $\phi \in W_{\text{loc}}^{2,3}(\Omega)$ since 
$A:D^{2} \phi + \boldsymbol{b}\cdot \nabla \phi + c \phi = 0$ in $\Omega$.
Therefore, by the ABP estimate \cite[Theorem~$9.1$]{GT01}, we can conclude 
that $\phi = 0$ \text{ in } $\Omega$.
\end{proof}

\begin{theorem}
\label{thm_global_A_continuous}
(Global $W^{2,p}$ estimate of linear elliptic PDE in non-divergence form)
Let $A \in [C^{0}(\overline{\Omega})]^{d\times d}$, $\boldsymbol{b}\in 
[L^{\infty}(\Omega)]^{d}$, $c \in L^{\infty}(\Omega)$ with $c \leq 0$, 
 and $\Omega$ is a bounded open Lipschitz polyhedral domain 
in $\mathbb{R}^{d}$ ($d=2,3$).  We further assume $A$ is uniformly elliptic in $\Omega$.
 There is a constant $C_{p}>0$ such that 
\begin{align}
\label{global_w2p_estimate_A_continuous}
\Vert w\Vert_{W^{2,p}(\Omega)} \leq C_{p} \Vert A:D^{2}w  + \boldsymbol{b}\cdot 
\nabla w + c w\Vert_{L^{p}(\Omega)}, 
\qquad \forall w \in W^{2,p}(\Omega) \cap W_{0}^{1,p}(\Omega).
\end{align}
Here $p$ is valid in the range described in (\ref{p_range1}).
%If $\Omega$ is convex ($d=2,3$), the estimate (\ref{global_w2p_estimate_A_continuous}) holds 
%for all $1< p \leq 2$. If $d=2$, the estimate (\ref{global_w2p_estimate_A_continuous}) holds 
%for all $1<p < \frac{4}{3} + \epsilon_{2}$ where $\epsilon_{2}$ is the same as 
%that in Lemma~\ref{lemma_Poisson_regularity}. 
\end{theorem}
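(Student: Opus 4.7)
The plan is a classical coefficient-freezing plus partition-of-unity argument in the spirit of \cite[Lemma~$9.17$]{GT01}, but with the boundary-flattening step (which would demand $C^{1,1}$ regularity of $\partial\Omega$) replaced by the global Poisson $W^{2,p}$ estimate on Lipschitz polyhedra supplied by Lemma~\ref{lemma_Poisson_regularity}, and with the lower-order term eliminated by Lemma~\ref{lemma_pde_unique1}.

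First I would establish a frozen-coefficient bound. Fix $\boldx_{0} \in \overline{\Omega}$, set $A_{0} := A(\boldx_{0})$, and observe that the symmetric positive-definite matrix $A_{0}$ can be diagonalised by an affine change of variables, so that $A_{0}:D^{2}(\cdot)$ is conjugate to $\Delta$ on the Lipschitz polyhedral image of $\Omega$. Lemma~\ref{lemma_Poisson_regularity} therefore supplies a continuous inverse from $L^{p}$ into $W^{2,p}\cap W_{0}^{1,p}$ for every $p$ in the stated range. Rewriting the equation as
\begin{align*}
A_{0}:D^{2}w = g - (A-A_{0}):D^{2}w - \boldsymbol{b}\cdot\nabla w - cw, \qquad g := A:D^{2}w + \boldsymbol{b}\cdot\nabla w + cw,
\end{align*}
and applying this inverse yields the local frozen estimate for functions supported in a small neighbourhood of $\boldx_{0}$.

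Next I would globalise. The uniform continuity of $A$ on $\overline{\Omega}$ lets me pick $r_{0}>0$ so small that $\underline{C}_{p}\Vert A - A(\boldx_{0})\Vert_{L^{\infty}(B_{r_{0}}(\boldx_{0}))}\le \tfrac12$ uniformly in $\boldx_{0}$. Cover $\overline{\Omega}$ with finitely many balls $B_{r_{0}}(\boldx_{j})$ and introduce a subordinate partition of unity $\{\eta_{j}\}$. Applying the frozen bound to $\eta_{j}w$ on each ball, absorbing the perturbation $(A-A_{0}):D^{2}(\eta_{j}w)$ into the left-hand side, and summing over $j$ (while controlling the commutator terms $[D^{2},\eta_{j}]w$ by lower-order derivatives of $w$) produces the intermediate inequality
\begin{align*}
\Vert w\Vert_{W^{2,p}(\Omega)} \le C\bigl(\Vert g\Vert_{L^{p}(\Omega)} + \Vert w\Vert_{W^{1,p}(\Omega)}\bigr).
\end{align*}
To erase the $W^{1,p}$ term I would argue by contradiction: if no universal $C_{p}$ works, take $w_{n} \in W^{2,p}(\Omega)\cap W_{0}^{1,p}(\Omega)$ with $\Vert w_{n}\Vert_{W^{2,p}}=1$ and $g_{n}\to 0$ in $L^{p}$. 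The intermediate bound forces $\Vert w_{n}\Vert_{W^{1,p}}$ to stay bounded below, and Rellich compactness of $W^{2,p}\Subset W^{1,p}$ gives a $W^{1,p}$-convergent subsequence; applying the same inequality to differences $w_{n}-w_{m}$ upgrades this to a Cauchy sequence in $W^{2,p}$, with limit $w_{\infty}\in W^{2,p}(\Omega)\cap W_{0}^{1,p}(\Omega)$ of unit $W^{2,p}$ norm solving $A:D^{2}w_{\infty}+\boldsymbol{b}\cdot\nabla w_{\infty}+cw_{\infty}=0$.

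The main obstacle is deducing $w_{\infty}\equiv 0$ to reach a contradiction. When $d=2$, or when $d=3$ and $p>\tfrac32$, Sobolev embedding places $w_{\infty}\in C^{0}(\overline{\Omega})$, so the ABP estimate \cite[Theorem~$9.1$]{GT01} (using $c\le 0$) applies directly. The remaining regime $d=3$ and $1<p\le\tfrac32$ is where $W^{2,p}(\Omega)\not\hookrightarrow C^{0}(\overline{\Omega})$ and ABP is unavailable a priori; this is exactly what Lemma~\ref{lemma_pde_unique1} was designed to overcome, via an internal bootstrap that raises the regularity of the homogeneous solution to $W^{2,q}$ with $q=\min(3p/(3-p),2)>\tfrac32$, after which continuity and ABP close the loop. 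In every admissible case one concludes $w_{\infty}=0$, contradicting $\Vert w_{\infty}\Vert_{W^{2,p}}=1$ and hence proving (\ref{global_w2p_estimate_A_continuous}).
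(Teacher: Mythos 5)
Your proposal is correct and follows essentially the same route as the paper: a frozen-coefficient/perturbation argument built on Lemma~\ref{lemma_Poisson_regularity} (which the paper outsources to the proof of \cite[Theorem~$1.1$]{QT2023}) to obtain the estimate with a lower-order term, a compactness--contradiction argument to remove that term, and uniqueness of the homogeneous problem via the ABP estimate when $d=2$ or $p>\frac{3}{2}$ and via Lemma~\ref{lemma_pde_unique1} when $d=3$, $1<p\le\frac{3}{2}$. The only point you gloss is that \cite[Theorem~$9.1$]{GT01} also requires $w_{\infty}\in W^{2,d}_{\mathrm{loc}}(\Omega)$, not just continuity, which must first be obtained from the interior estimates for continuous coefficients (\cite[Lemma~$9.16$]{GT01}), exactly as the paper does before invoking ABP.
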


\begin{proof}
When $d=2$, $W^{2,p}(\Omega) \Subset C^{0}(\overline{\Omega})$ due to 
Sobolev's embedding theorem. Furthermore,  by the interior estimate in 
\cite[Lemma~$9.16$]{GT01}, 
we have $u \in W_{\text{loc}}^{2,2}(\Omega)$. Therefore, 
Theorem~\ref{thm_global_A_continuous} can be proven by mimicking the proofs 
of \cite[Theorem~$1.1$]{QT2023} and \cite[Corollary~$1.2$]{QT2023} with the global 
$W^{2,p}$ estimate of Poisson equation (see \cite[Proposition~$2.1$]{ChibaSaito2019}) 
and the Alexandroff–Bakelman–Pucci estimate estimate \cite[Theorem~$9.1$]{GT01}.

Now we focus on the case $d=3$. Due to \cite[Corollary~$1.2$]{QT2023}, Theorem~\ref{thm_global_A_continuous} 
holds if we further assume $\Omega$ is a convex polyhedra in $\mathbb{R}^{3}$ and $\frac{3}{2} < p \leq 2$. 
In fact, if we replace \cite[Corollary~$3.12$]{Dauge1992} by
Lemma~\ref{lemma_Poisson_regularity} 
in the proof of \cite[Theorem~$1.1$]{QT2023}, it is easy to have that  
for any $1 < p \leq 2$ and for any $w \in W^{2,p}(\Omega) 
\cap W_{0}^{1,p}(\Omega)$,
\begin{align}
\label{global_estimate1_A_continuous}
\Vert w\Vert_{W^{2,p}(\Omega)} \leq C_{p}^{\prime} \left( \Vert A:D^{2}w  
+ \boldsymbol{b}\cdot\nabla w + cw\Vert_{L^{p}(\Omega)} 
+ \Vert w \Vert_{L^{p}(\Omega)} \right).
\end{align}  
Here $C_{p}^{\prime}$ depends on $1 < p \leq 2$ and $\Omega$ is an open bounded convex domain in $\mathbb{R}^{3}$.

As the proof of \cite[Corollary~$1.2$]{QT2023}, Theorem~\ref{thm_global_A_continuous} will be proven 
via (\ref{global_estimate1_A_continuous}) and 
the proof of contraction with the uniqueness of solution in $W^{2,p}(\Omega) \cap W_{0}^{1,p}(\Omega)$: 
\begin{align*}
A:D^{2}w + \boldsymbol{b}\cdot \nabla w + cw = 0 \text{ in } \Omega 
\text{ with } w \in W^{2,p}(\Omega) \cap W_{0}^{1,p}(\Omega)   
\Longrightarrow u = 0 \text{ in } \Omega.
\end{align*}

When $\frac{3}{2} < p \leq 2$, it is straightforward by the Alexandroff–Bakelman–Pucci estimate estimate with the fact 
$W^{2,p}(\Omega) \Subset C^{0}(\overline{\Omega})$ (see the proof of \cite[Corollary~$1.2$]{QT2023}). 
Therefore, in order to complete the proof of Theorem~\ref{thm_global_A_continuous}, we only need 
the uniqueness for $1<p \leq \frac{3}{2}$, which is provided by Lemma~\ref{lemma_pde_unique1}.
Therefore, we can conclude that the proof of Theorem~\ref{thm_global_A_continuous} is complete.
\end{proof}

\subsection{Stability analysis of FEM for linear elliptic PDE in non-divergence form with uniformly continuous $A$}

\subsubsection{Stability estimates for the linear elliptic PDE in non-divergence form 
with constant $A$ and $\boldsymbol{b}=\boldsymbol{0}, c = 0$}
\label{sec_stability_constant_A} 

All results in Section~\ref{sec_stability_constant_A} can be found in \cite{FHM2017}. 
But the results in \cite{FHM2017} are only valid on domains with $C^{1,1}$ boundary. 

We denote by $\tilde{A}_{0} \in \mathbb{R}^{d\times d}$ a symmetric and positive definite matrix.

According to Definition~\ref{def_discrete_deri} and the fact $V_{h} \subset \overline{V}_{h}$, we have
\begin{align}
\label{nondiv_fem_constant_A}
\int_{\Omega}(\tilde{A}_{0}: \overline{\nabla}_{h}(\nabla w_{h}))v_{h} d\boldx 
= -(\tilde{A}_{0}\nabla w_{h}, \nabla v_{h})_{\Omega}, \qquad \forall w_{h},v_{h}\in V_{h}.
\end{align}

We define a linear operator $\mathcal{L}_{\tilde{A}_{0},h}: W_{h}^{1} \rightarrow V_{h}$ as 
\begin{align}
\label{def_L_A_constant_numerical}
(\mathcal{L}_{\tilde{A}_{0},h}w, v_{h})_{\Omega} = \int_{\Omega}(\tilde{A}_{0}: \overline{\nabla}_{h}(\nabla w))v_{h} d\boldx,  
\qquad \forall w \in W_{h}^{1}, v_{h} \in V_{h}.
\end{align}
Here, we recall that the piecewise Sobolev space $W_{h}^{1} =W^{2,1}(\mathcal{T}_{h}) \cap W_{0}^{1,1}(\Omega)$.

\begin{lemma}
\label{lem_w1p_estimate}
There is a positive constant $C$ such that for any $h \in (0, h_{0})$,
\begin{align}
\label{gloabl_w1p_estiamte_constant_A}
\Vert w_{h} - w\Vert_{W^{1,p}(\Omega)}\leq C 
\inf_{\chi_{h} \in V_{h}}\Vert \chi_{h} - w\Vert_{W^{1,p}(\Omega)}, 
\qquad \forall w \in W_{0}^{1,p}(\Omega).
\end{align}
Here $w_{h} \in V_{h}$ satisfying 
\begin{align*}
(\tilde{A}_{0} \nabla w_{h}, v_{h})_{\Omega} = 
(\tilde{A}_{0} \nabla w, v_{h})_{\Omega}, 
\qquad \forall v_{h} \in V_{h}.
\end{align*}
If the domain $\Omega$ is a convex polyhedra in $\mathbb{R}^{d}$ ($d=2,3$), the estimate (\ref{gloabl_w1p_estiamte_constant_A}) 
holds for any $p \in (1,+\infty]$. If the domain is a Lipschitz polygon in 
$\mathbb{R}^{2}$, the estimate 
(\ref{gloabl_w1p_estiamte_constant_A}) holds for any $p \in (\frac{4}{3} - \epsilon_{1}, +\infty]$ where $\epsilon_{1}$ 
is the same as that in Lemma~\ref{lemma_Poisson_regularity}. 
The constants $C$ and $h_{0}$ may depend on $\tilde{A}_{0}$.
\end{lemma}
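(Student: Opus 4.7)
The plan is to reduce the quasi-optimality bound in Lemma~\ref{lem_w1p_estimate} to the $W^{1,p}$-stability of the Ritz projection $R_h: W_0^{1,p}(\Omega)\to V_h$ defined by $(\tilde A_0 \nabla R_h \eta, \nabla v_h)_\Omega = (\tilde A_0 \nabla \eta, \nabla v_h)_\Omega$ for all $v_h \in V_h$ (reading the missing gradient on the test function in the statement of $w_h$ as a typographical omission). The defining identity for $w_h$ then says $w_h = R_h w$, and since every $\chi_h \in V_h$ is its own Ritz projection, $w_h - \chi_h = R_h(w - \chi_h)$. A triangle inequality gives
\begin{align*}
\|w - w_h\|_{W^{1,p}(\Omega)} \le \|w - \chi_h\|_{W^{1,p}(\Omega)} + \|R_h(w - \chi_h)\|_{W^{1,p}(\Omega)},
\end{align*}
so (\ref{gloabl_w1p_estiamte_constant_A}), with the supremum read as an infimum (the only meaningful form), reduces to proving the stability bound
\begin{align*}
\|R_h \eta\|_{W^{1,p}(\Omega)} \le C \|\eta\|_{W^{1,p}(\Omega)}, \qquad \forall \eta \in W_0^{1,p}(\Omega).
\end{align*}

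Next I would reduce this stability to the Laplace case. Because $\tilde A_0$ is constant and symmetric positive definite, the affine change of coordinates $\boldsymbol{x} = \tilde A_0^{1/2}\tilde{\boldsymbol{x}}$ converts $-\nabla\cdot(\tilde A_0\nabla\cdot)$ into $-\Delta$, takes $\Omega$ onto another Lipschitz polyhedron preserving convexity in $\mathbb{R}^d$ and the polygonal (possibly non-convex) structure in $\mathbb{R}^2$, and preserves mesh quasi-uniformity up to constants depending only on $\tilde A_0$; hence Lemma~\ref{lemma_Poisson_regularity} transfers directly to $-\nabla\cdot(\tilde A_0\nabla\cdot)$. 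Having arranged this, I would attack the stability by duality: write
\begin{align*}
\|\nabla R_h\eta\|_{L^p(\Omega)} = \sup_{\|\boldsymbol{g}\|_{L^{p'}(\Omega)}\le 1}(\nabla R_h\eta, \boldsymbol{g})_\Omega,
\end{align*}
and for each such $\boldsymbol{g}$, Lemma~\ref{lemma_Poisson_regularity} (second part) furnishes a unique $z \in W_0^{1,p'}(\Omega)$ with $-\nabla\cdot(\tilde A_0\nabla z) = -\nabla\cdot\boldsymbol{g}$ and $\|z\|_{W^{1,p'}(\Omega)} \le C$; the admissible range of $p'$ (equivalently, of $p$) is precisely that appearing in Lemma~\ref{lem_w1p_estimate}. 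Integration by parts followed by Galerkin orthogonality against $P_h z \in V_h$ yields
\begin{align*}
(\nabla R_h\eta,\boldsymbol{g})_\Omega = (\tilde A_0\nabla R_h\eta,\nabla(z - P_h z))_\Omega + (\tilde A_0\nabla\eta,\nabla P_h z)_\Omega,
\end{align*}
and the last term is bounded by $C\|\eta\|_{W^{1,p}(\Omega)}$ using H\"older's inequality together with the $W^{1,p'}$-stability of $P_h$ from Lemma~\ref{lemma_L2_proj_props}.

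The hard part is handling the defect term $(\tilde A_0\nabla R_h\eta,\nabla(z - P_h z))_\Omega$: for $p$ away from $2$, this cannot be controlled by the $L^{p'}$-stability of $P_h$ alone, because $z$ is only in $W^{1,p'}$ and no smallness of $\nabla(z - P_h z)$ is available. To close the argument I would invoke the classical Rannacher--Scott/Nitsche--Schatz--Wahlbin bootstrap: combine the trivial $W^{1,2}$-stability of $R_h$ (which follows from coercivity), weighted $L^2$ estimates on the quasi-uniform mesh, and inverse inequalities, so that the defect term can eventually be absorbed into the left-hand side of the stability inequality after a finite iteration. The obstacle in adapting this bootstrap from the $C^{1,1}$ setting of \cite{FHM2017} to Lipschitz polyhedra is that the smooth-boundary $W^{2,p'}$ Poisson regularity used there is unavailable; instead one must rely on the polyhedral $W^{1,p'}$-regularity of Lemma~\ref{lemma_Poisson_regularity}, which is why the range of $p$ in (\ref{gloabl_w1p_estiamte_constant_A}) coincides precisely with the range in the second part of that lemma and why the threshold $h_0$ must be allowed to depend on $\tilde A_0$.
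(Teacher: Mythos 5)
Your reduction to Ritz-projection stability and the duality setup are sensible, but the proposal has a genuine gap precisely where you flag ``the hard part.'' With $z$ only in $W^{1,p'}(\Omega)$, the defect term $(\tilde A_0\nabla R_h\eta,\nabla(z-P_hz))_\Omega$ is bounded only by $C\Vert\nabla R_h\eta\Vert_{L^p(\Omega)}\Vert z\Vert_{W^{1,p'}(\Omega)}$ with no small factor, so it cannot be absorbed into the left-hand side, and the Rannacher--Scott/Nitsche--Schatz--Wahlbin ``bootstrap'' you invoke to rescue it is never carried out: on polygonal/polyhedral domains that weighted-norm machinery \emph{is} the $W^{1,\infty}$/$W^{1,p}$ stability theory of the Ritz projection, i.e.\ essentially the statement to be proven. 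The paper does not reprove it; it quotes the max-norm gradient quasi-optimality (\ref{W1infinity_estimate}) from \cite{Schatz1980} ($d=2$, including non-convex polygons) and \cite{GLRS2009} (convex polyhedra, $d=3$), interpolates with the trivial energy case to get (\ref{W1p_estimate_1}) for all $p\in[2,+\infty]$, and then for $p\in(1,2)$ runs the duality with the \emph{Ritz} projection $\tilde v_h$ of the dual function rather than the $L^2$ projection $P_hz$: by Galerkin orthogonality and the symmetry of $\tilde A_0$ the defect term then vanishes identically, and only the already-established $W^{1,p'}$ stability with $p'\ge 2$ is needed. Your scheme, by choosing $P_hz$, reintroduces exactly the term whose control requires the result you are trying to prove.

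A second, independent problem is the range of $p$. Your duality places the elliptic regularity requirement on the dual exponent $p'$, so your claim that ``the admissible range of $p'$ (equivalently, of $p$) is precisely that appearing in Lemma~\ref{lem_w1p_estimate}'' is not correct: for $p=+\infty$ you would need $p'=1$, which is outside the second part of Lemma~\ref{lemma_Poisson_regularity}, and on a non-convex polygon $p$ near $\frac43$ forces $p'$ near $4$, which is not guaranteed to lie in $(\frac43-\epsilon_1,\,3+\epsilon_3)$; large finite $p$ is likewise excluded. The paper's $p\in(1,2)$ argument instead applies Lemma~\ref{lemma_Poisson_regularity} at the \emph{primal} exponent $p$ (this is what produces the threshold $\frac43-\epsilon_1$) and takes the $p\in[2,+\infty]$ cases, including $p=+\infty$ asserted in (\ref{gloabl_w1p_estiamte_constant_A}), from the cited $L^\infty$-gradient estimates, so no dual regularity is ever needed. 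Until you either import those max-norm results (as the paper does) or genuinely execute a weighted-norm argument on Lipschitz polyhedra, the proof is incomplete both at the defect term and at the endpoints of the stated range.
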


\begin{proof}
Obviously, we have 
\begin{align}
\label{H1_estimate}
\Vert \nabla (w_{h} - w)\Vert_{L^{2}(\Omega)} \leq C \Vert \nabla(\chi_{h} - w) 
\Vert_{L^{2}(\Omega)}, \qquad \forall \chi_{h} \in V_{h}.
\end{align}
By \cite[Theorem~$2$]{GLRS2009} ($d=3$), \cite[Theorem~$2$]{Schatz1980} 
($d=2$), 
\begin{align}
\label{W1infinity_estimate}
\Vert \nabla (w_{h} - w)\Vert_{L^{\infty}(\Omega)} \leq C \Vert \nabla(\chi_{h} - w) 
\Vert_{L^{\infty}(\Omega)}, \qquad \forall \chi_{h} \in V_{h},
\end{align}
if $h <h_{0}$. Here the constant $C$ in (\ref{H1_estimate},\ref{W1infinity_estimate}) may depend on $\tilde{A}_{0}$.
Therefore, according to (\ref{H1_estimate},\ref{W1infinity_estimate}), 
there is a constant $C$ such that for any $2 \leq p \leq +\infty$, if $h<h_{0}$,
\begin{align}
\label{W1p_estimate_1}
\Vert w_{h} - w \Vert_{W^{1,p}(\Omega)} \leq C \Vert \chi_{h} - w \Vert_{W^{1,p}(\Omega)}, 
\qquad \forall \chi_{h} \in V_{h}.
\end{align}

Now, for any $p \in (1,2)$, we denote by $p^{\prime} \geq 2$ such that $\frac{1}{p} + \frac{1}{p^{\prime}}$. We choose $\chi_{h} \in V_{h}$ arbitrarily. 
Then by Lemma~\ref{lemma_Poisson_regularity}, 
we have for any $p \in (1,2)$ if $\Omega$ is a convex polyhedra in $\mathbb{R}^{d}$ 
($d=2,3$) and for any $p \in (\frac{4}{3}-\epsilon_{1},2]$ if $\Omega$ is a 
Lipschitz polygon in $\mathbb{R}^{2}$,
\begin{align*}
& \Vert w_{h} - \chi_{h}\Vert_{W^{1,p}(\Omega)} 
\leq C \Vert \nabla \cdot ( \tilde{A}_{0}\nabla (w_{h} 
- \chi_{h}) ) \Vert_{W^{-1,p}(\Omega)} \\
= & C \sup_{\tilde{v} \in W_{0}^{1,p^{\prime}}(\Omega)} \dfrac{-( \nabla \cdot ( \tilde{A}_{0}\nabla (w_{h} - \chi_{h}), \tilde{v}))_{\Omega}}
{\Vert \tilde{v}\Vert_{W^{1,p^{\prime}}(\Omega)}}
= C \sup_{\tilde{v} \in W_{0}^{1,p^{\prime}}(\Omega)} \dfrac{( \tilde{A}_{0}\nabla (w_{h} - \chi_{h}), \nabla \tilde{v})_{\Omega}}
{\Vert \tilde{v}\Vert_{W^{1,p^{\prime}}(\Omega)}} \\
= & C \sup_{\tilde{v} \in W_{0}^{1,p^{\prime}}(\Omega)} \dfrac{( \tilde{A}_{0}\nabla (w_{h} - \chi_{h}), \nabla \tilde{v}_{h})_{\Omega}}
{\Vert \tilde{v}\Vert_{W^{1,p^{\prime}}(\Omega)}} 
= C \sup_{\tilde{v} \in W_{0}^{1,p^{\prime}}(\Omega)} \dfrac{( \tilde{A}_{0}\nabla 
(w - \chi_{h}), \nabla \tilde{v}_{h})_{\Omega}}
{\Vert \tilde{v}\Vert_{W^{1,p^{\prime}}(\Omega)}},
\end{align*}
where $\tilde{v}_{h} \in V_{h}$ satisfying 
\begin{align*}
(\tilde{A}_{0}\nabla \tilde{v}_{h}, \nabla \tilde{w}_{h})_{\Omega} = (\tilde{A}_{0}\nabla \tilde{v}, \nabla \tilde{w}_{h}), 
\qquad \forall \tilde{w}_{h} \in V_{h}.
\end{align*}
By (\ref{W1p_estimate_1}), 
\begin{align*}
\Vert \tilde{v}_{h}\Vert_{W^{1,p^{\prime}}(\Omega)} \leq C \Vert \tilde{v}\Vert_{W^{1,p^{\prime}}(\Omega)}.
\end{align*}
So we have 
\begin{align*}
\Vert w_{h}  - \chi_{h}\Vert_{W^{1,p}(\Omega)} \leq C 
\Vert w - \chi_{h}\Vert_{W^{1,p}(\Omega)}.
\end{align*}
Therefore, for any $p \in (1,2)$, we also have 
that if $h<h_{0}$,
\begin{align}
\label{W1p_estimate_2}
\Vert w_{h} - w \Vert_{W^{1,p}(\Omega)} \leq C \Vert \chi_{h} - w \Vert_{W^{1,p}(\Omega)}, 
\qquad \forall \chi_{h} \in V_{h}.
\end{align}

(\ref{W1p_estimate_1}, \ref{W1p_estimate_2}) imply that for any $p\in (1,+\infty]$, 
\begin{align*}
\Vert w_{h} - w \Vert_{W^{1,p}(\Omega)} \leq C \inf_{\chi_{h}\in V_{h}} 
\Vert \chi_{h} - w \Vert_{W^{1,p}(\Omega)}.
\end{align*}
Therefore, the proof of the estimates (\ref{gloabl_w1p_estiamte_constant_A}) is complete.
\end{proof}

\begin{lemma}
\label{lemma_gloabl_w2p_estiamte_constant_A}
Let $\mathcal{L}_{\tilde{A}_{0},h}$ be the linear operator defined 
in (\ref{def_L_A_constant_numerical}). 
There is a positive constant $C$ such that for all $h \in (0, h_{0})$,
\begin{align}
\label{gloabl_w2p_estiamte_constant_A}
\Vert w_{h}\Vert_{W_{h}^{2,p}(\Omega)} \leq C \Vert \mathcal{L}_{\tilde{A}_{0},h} w_{h}\Vert_{L^{p}(\Omega)}, 
\qquad \forall w_{h} \in V_{h}.
\end{align}
%If the domain $\Omega$ is a convex polyhedra in $\mathbb{R}^{d}$ ($d=2,3$), the estimate 
%(\ref{gloabl_w2p_estiamte_constant_A}) holds for any $p \in (1,2]$. If the domain is a Lipschitz 
%polygon in $\mathbb{R}^{2}$, the estimate 
%(\ref{gloabl_w2p_estiamte_constant_A}) holds for any $p \in (\frac{4}{3} - \epsilon_{1}, \frac{4}{3} + \epsilon_{2})$ 
%where $\epsilon_{1}$ and $\epsilon_{2}$ 
%are the same as those in Lemma~\ref{lemma_Poisson_regularity}. 
Here $p$ is valid in the range described in (\ref{p_range2}).
The constants $C$ and $h_{0}$ may depend $\tilde{A}_{0}$.
\end{lemma}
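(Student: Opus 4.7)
The plan is to view the discrete operator $\mathcal{L}_{\tilde A_0,h}$ as the Galerkin discretisation of the constant-coefficient elliptic operator $-\nabla\cdot(\tilde A_0\nabla\cdot)$ and to lift regularity from the continuous problem to the discrete $W^{2,p}$-type norm. Set $g:=\mathcal{L}_{\tilde A_0,h}w_h\in V_h$; by (\ref{nondiv_fem_constant_A}) the definition (\ref{def_L_A_constant_numerical}) is equivalent to $(\tilde A_0\nabla w_h,\nabla v_h)_\Omega=-(g,v_h)_\Omega$ for every $v_h\in V_h$. Thus $w_h$ is precisely the Galerkin approximation in $V_h$ of the solution $w\in W^{2,p}(\Omega)\cap W_0^{1,p}(\Omega)$ of $-\nabla\cdot(\tilde A_0\nabla w)=g$ with homogeneous Dirichlet data. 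An orthogonal change of variables diagonalising $\tilde A_0$ converts this into a Poisson problem on a domain that is again a Lipschitz polyhedron and still convex if $\Omega$ was, so Lemma~\ref{lemma_Poisson_regularity} supplies $w$ together with $\|w\|_{W^{2,p}(\Omega)}\leq C\|g\|_{L^p(\Omega)}$ over the $p$-range (\ref{p_range1}), where $C$ may depend on $\tilde A_0$.

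Next I apply Lemma~\ref{lem_w1p_estimate} to the pair $(w,w_h)$; this requires $p\in(1,+\infty]$ in the convex case or $p\in(4/3-\epsilon_1,+\infty]$ in the non-convex polygonal case, and gives
\begin{equation*}
\|w_h-w\|_{W^{1,p}(\Omega)}\leq C\|\pi_h w-w\|_{W^{1,p}(\Omega)}\leq Ch\|D^2w\|_{L^p(\Omega)},
\end{equation*}
where $\pi_h w\in V_h$ is any Scott--Zhang type quasi-interpolant of $w$. The intersection of the admissible $p$-ranges in Lemma~\ref{lemma_Poisson_regularity} and Lemma~\ref{lem_w1p_estimate} is exactly (\ref{p_range2}), which accounts for the restriction to a neighbourhood of $4/3$ in the non-convex planar setting.

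To convert this $W^{1,p}$-control into the discrete second-order norm (\ref{discrete_second_order_norm}), I split $D_h^2w_h=D_h^2(w_h-\pi_h w)+D_h^2(\pi_h w-w)+D^2w$ and note that $[\nabla w_h]=[\nabla(w_h-w)]$ on every interior face because $\nabla w$ has no jumps. A standard elementwise inverse inequality on $V_h$ yields $\|D^2(w_h-\pi_h w)\|_{L^p(K)}\leq Ch_K^{-1}\|\nabla(w_h-\pi_h w)\|_{L^p(K)}$; combining with the approximation properties of $\pi_h w$ and the $W^{1,p}$ error bound above produces $\|D_h^2(w_h-w)\|_{L^p(\Omega)}\leq C\|D^2w\|_{L^p(\Omega)}$, and hence $\|D_h^2w_h\|_{L^p(\Omega)}\leq C\|D^2w\|_{L^p(\Omega)}$. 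For the jump contribution, the scaled trace inequality applied elementwise to $\nabla(w_h-w)$ gives
\begin{equation*}
\Bigl(\sum_{F\in\mathcal{F}_h^I}h_F^{1-p}\|[\nabla(w_h-w)]\|_{L^p(F)}^p\Bigr)^{1/p}\leq C\bigl(h^{-1}\|\nabla(w_h-w)\|_{L^p(\Omega)}+\|D_h^2(w_h-w)\|_{L^p(\Omega)}\bigr),
\end{equation*}
whose right-hand side is $\leq C\|D^2w\|_{L^p(\Omega)}$. Assembling the two pieces and using $\|D^2w\|_{L^p(\Omega)}\leq C\|g\|_{L^p(\Omega)}$ delivers (\ref{gloabl_w2p_estiamte_constant_A}).

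The main technical obstacle is the coordination of the $p$-ranges: one must ensure that the continuous regularity lift (Lemma~\ref{lemma_Poisson_regularity}) and the discrete $W^{1,p}$-stability of the Galerkin projection (Lemma~\ref{lem_w1p_estimate}) are simultaneously available, which in the non-convex planar case forces $p$ into the narrow window $(4/3-\epsilon_1,4/3+\epsilon_2)$ appearing in (\ref{p_range2}). Beyond this, the argument is a clean bootstrap from $W^{1,p}$-stability and interpolation error to discrete $W^{2,p}$-control via inverse and trace inequalities.
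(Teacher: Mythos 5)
Your proof follows essentially the same route as the paper's: you identify $w_h$ as the Ritz projection of the continuous solution $w$ of the constant-coefficient problem with datum $\mathcal{L}_{\tilde{A}_{0},h}w_{h}$, combine Lemma~\ref{lem_w1p_estimate} with the elliptic regularity of Lemma~\ref{lemma_Poisson_regularity} to get $\Vert w_{h}-w\Vert_{W^{1,p}(\Omega)}\leq Ch\Vert \mathcal{L}_{\tilde{A}_{0},h}w_{h}\Vert_{L^{p}(\Omega)}$, and then pass to the $W_{h}^{2,p}$-norm by inverse and trace inequalities, the only differences being cosmetic (you compare $\nabla w_h$ with $\nabla w$ via a Scott--Zhang interpolant of $w$ where the paper uses the piecewise-constant $L^{2}$-projection of $\nabla w$, and there is a harmless sign slip in the identification of $w$). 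Note only that reducing $\nabla\cdot(\tilde{A}_{0}\nabla\cdot)$ to the Laplacian requires an anisotropic scaling in addition to the orthogonal rotation, which is admissible here since the lemma allows the constants to depend on $\tilde{A}_{0}$.
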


\begin{proof}
We choose $w_{h} \in V_{h}$ arbitrarily. We notice that $\mathcal{L}_{\tilde{A}_{0},h}w_{h} \in V_{h}$.
We denote by $w\in H_{0}^{1}(\Omega)$ satisfying 
\begin{align*}
\tilde{A}_{0}: D^{2} w = \nabla \cdot \left( \tilde{A}_{0} \nabla w\right) = \mathcal{L}_{\tilde{A}_{0},h}w_{h} 
\quad \text{ in } \Omega.
\end{align*}

According to (\ref{nondiv_fem_constant_A}), 
\begin{align}
\label{fem_projection}
(\tilde{A}_{0}\nabla w_{h}, \nabla v_{h})_{\Omega} 
= (\tilde{A}_{0}\nabla w, \nabla v_{h})_{\Omega}, 
\qquad \forall v_{h} \in V_{h}.
\end{align}

Then by (\ref{fem_projection}),  the construction of $w$ 
and Lemma~\ref{lem_w1p_estimate}, we have that
for any $p\in (1,+\infty]$ and $0< h < h_{0}$, 
\begin{align}
\label{W1p_estimate_constant_A_complete}
\Vert w_{h} - w \Vert_{W^{1,p}(\Omega)} \leq C \inf_{\chi_{h}\in V_{h}} 
\Vert \chi_{h} - w \Vert_{W^{1,p}(\Omega)}\leq C h \Vert w\Vert_{W^{2,p}(\Omega)}.
\end{align}
(\ref{W1p_estimate_constant_A_complete}) and Lemma~\ref{lemma_Poisson_regularity} 
imply that for any $p \in (1, 2]$ if $\Omega$ is a convex polyhedra in 
$\mathbb{R}^{d}$ ($d=2,3$) and for any $p \in (\frac{4}{3}-\epsilon_{1}, 
\frac{4}{3}) + \epsilon_{2}$, if $h < h_{0}$,
\begin{align}
\label{W1p_estimate_constant_A} 
\Vert w_{h} - w \Vert_{W^{1,p}(\Omega)} 
\leq C h \Vert w\Vert_{W^{2,p}(\Omega)}
\leq CC^{\prime} h \Vert \mathcal{L}_{\tilde{A}_{0},h}w_{h}\Vert_{L^{p}(\Omega)}.
\end{align}
Here the constant $C^{\prime}$ may also depend on $\tilde{A}_{0}$.

We denote by $\boldsymbol{\phi}_{h}$ the standard $L^{2}$-orthogonal projection 
of $\nabla w$ into $[P_{0}(\mathcal{T}_{h})]^{d}$. 
Then by the discrete trace inequality, we have 
\begin{align}
\label{w1p_estiamte_constant_A_projection}
& \Vert \boldsymbol{\phi}_{h} - \nabla w \Vert_{L^{p}(\Omega)} 
\leq C h \Vert w\Vert_{W^{2,p}(\Omega)} 
\leq C C^{\prime} h \Vert \mathcal{L}_{\tilde{A}_{0},h}w_{h}\Vert_{L^{p}(\Omega)},\\
\nonumber
& \left( \Sigma_{F \in \mathcal{F}_{h}^{I}} h_{F}^{1-p}\Vert [\boldsymbol{\phi}_{h}] \Vert_{L^{p}(F\cap 
\overline{D})}^{p} \right)^{\frac{1}{p}}
\leq C \Vert w\Vert_{W^{2,p}(\Omega)} 
\leq C C^{\prime} \Vert \mathcal{L}_{\tilde{A}_{0},h}w_{h}\Vert_{L^{p}(\Omega)}. 
\end{align}

Then by (\ref{W1p_estimate_constant_A},\ref{w1p_estiamte_constant_A_projection}) and 
the discrete inverse inequality, we can conclude that the proof is complete.
\end{proof}

\begin{lemma}
\label{lemma_local_estimate_constant_A}
%%Let the domain $\Omega$ be a convex polyhedra in $\mathbb{R}^{3}$ and $p \in (1,2]$.
For $\boldx_{0} \in \overline{\Omega}$ and $R > 0$, define 
\begin{align*}
B_{R}(\boldx_{0}) := \{ \boldx \in \Omega: \vert \boldx - \boldx_{0} \vert < R \} \subset \Omega.
\end{align*}
Let $R^{\prime} = R + d$ with $d \geq 2h$. 
Then there is a positive constant $C$ such that 
\begin{align}
\label{local_estimate_constant_A}
\Vert w_{h}\Vert_{W_{h}^{2,p}(B_{R}(\boldx_{0}))} \leq C \Vert \mathcal{L}_{\tilde{A}_{0},h}w_{h}
\Vert_{L_{h}^{p}(B_{R^{\prime}}(\boldx_{0}))},\qquad \forall w_{h} \in V_{h}(B_{R}(\boldx_{0})),
\end{align}
for any $h \in (0, h_{0})$ where $h_{0}$ is introduced in Lemma~\ref{lemma_gloabl_w2p_estiamte_constant_A}.

%If the domain $\Omega$ is a convex polyhedra in $\mathbb{R}^{d}$ ($d=2,3$), the estimate (\ref{local_estimate_constant_A}) 
%holds for any $p \in (1,2]$. If the domain is a Lipschitz polygon in $\mathbb{R}^{2}$, the estimate 
%(\ref{local_estimate_constant_A}) holds for any $p \in (\frac{4}{3} - \epsilon_{1}, \frac{4}{3} + \epsilon_{2})$ 
%where $\epsilon_{1}$ and $\epsilon_{2}$ 
%are the same as those in Lemma~\ref{lemma_Poisson_regularity}. 
Here $p$ is valid in the range described in (\ref{p_range2}).
The constant $C$ is independent of the choice of $R$ and $R^{\prime}$. 
But the constant $C$ may depend on $\tilde{A}_{0}$.
\end{lemma}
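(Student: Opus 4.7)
The plan is to reduce the desired local stability estimate to the global estimate (\ref{gloabl_w2p_estiamte_constant_A}) by a duality argument combined with a finite element cutoff construction, using the hypothesis $d \geq 2h$ to build a mesh-layer-thick transition for the cutoff.

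First I would observe that because $w_h \in V_h(B_R(\boldx_0))$, the function $w_h$ vanishes on every simplex meeting $\Omega \setminus B_R(\boldx_0)$. In particular $\nabla w_h$ and all jump terms $[\nabla w_h]$ are supported on simplices contained in $B_R(\boldx_0)$, so $\|w_h\|_{W_h^{2,p}(B_R(\boldx_0))} = \|w_h\|_{W_h^{2,p}(\Omega)}$. Combining this with Lemma~\ref{lemma_gloabl_w2p_estiamte_constant_A} yields
\begin{align*}
\|w_h\|_{W_h^{2,p}(B_R(\boldx_0))} \leq C \|\mathcal{L}_{\tilde{A}_0,h} w_h\|_{L^p(\Omega)}.
\end{align*}
Second, I would localize the right hand side by duality. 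Because $\mathcal{L}_{\tilde{A}_0,h} w_h \in V_h$, for any $v \in L^{p'}(\Omega)$ one has $(\mathcal{L}_{\tilde{A}_0,h} w_h, v)_\Omega = (\mathcal{L}_{\tilde{A}_0,h} w_h, P_h v)_\Omega$, and (\ref{L2_proj_prop1}) gives $\|P_h v\|_{L^{p'}(\Omega)} \leq C\|v\|_{L^{p'}(\Omega)}$. Hence the dual supremum can be restricted to $v_h \in V_h$:
\begin{align*}
\|\mathcal{L}_{\tilde{A}_0,h} w_h\|_{L^p(\Omega)} \leq C \sup_{v_h \in V_h} \dfrac{(\mathcal{L}_{\tilde{A}_0,h} w_h, v_h)_\Omega}{\|v_h\|_{L^{p'}(\Omega)}}.
\end{align*}

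Third, for each $v_h \in V_h$ I would construct $\tilde v_h \in V_h(B_{R'}(\boldx_0))$ that coincides with $v_h$ on the support of $\nabla w_h$. Choose a Lipschitz cutoff $\chi$ with $\chi \equiv 1$ on a neighbourhood of $\overline{B_R(\boldx_0)}$, $\chi \equiv 0$ outside $B_{R+d-h}(\boldx_0)$, and $0 \leq \chi \leq 1$, and set $\tilde v_h := I_h(\chi v_h)$, where $I_h$ is the Lagrange interpolation operator into $V_h$. Since $d \geq 2h$: (a) on each element $K \subset B_R(\boldx_0)$ all nodes of $K$ satisfy $\chi = 1$, so $\tilde v_h|_K = v_h|_K$; (b) every element carrying a nonzero value of $\tilde v_h$ contains at least one node inside $B_{R+d-h}(\boldx_0)$, and its diameter being at most $h$ places the whole element inside $B_{R+d}(\boldx_0) = B_{R'}(\boldx_0)$, whence $\tilde v_h \in V_h(B_{R'}(\boldx_0))$; (c) using $|\chi| \leq 1$ together with the $L^{p'}$ inverse inequality elementwise gives $\|\tilde v_h\|_{L^{p'}(B_{R'}(\boldx_0))} \leq C\|v_h\|_{L^{p'}(\Omega)}$ with $C$ independent of $R$, $R'$, and $v_h$.

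Fourth, since $\nabla w_h$ is supported on elements $K \subset B_R(\boldx_0)$ where $\tilde v_h = v_h$, identity (\ref{nondiv_fem_constant_A}) yields
\begin{align*}
(\mathcal{L}_{\tilde{A}_0,h} w_h, v_h)_\Omega = -(\tilde{A}_0 \nabla w_h, \nabla v_h)_\Omega = -(\tilde{A}_0 \nabla w_h, \nabla \tilde v_h)_\Omega = (\mathcal{L}_{\tilde{A}_0,h} w_h, \tilde v_h)_{B_{R'}(\boldx_0)}.
\end{align*}
Inserting into the supremum, the definition (\ref{discrete_zero_order_norm}) of $\|\cdot\|_{L_h^p(B_{R'}(\boldx_0))}$ together with the $L^{p'}$-bound on $\tilde v_h$ closes the estimate. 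The main obstacle will be the third step: the cut-off test function must simultaneously lie in $V_h(B_{R'}(\boldx_0))$, coincide with $v_h$ on every element supporting $\nabla w_h$ so that the integral identity is preserved exactly, and obey a uniform $L^{p'}$ bound independent of $v_h$ and $R$. The assumption $d \geq 2h$ is precisely what affords the mesh-layer-thick transition zone needed for $\chi$.
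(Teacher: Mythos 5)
Your proposal is correct and follows essentially the same route as the paper: the paper's proof simply invokes the argument of \cite[Lemma~$2.7$]{FHM2017} with its global estimate replaced by Lemma~\ref{lemma_gloabl_w2p_estiamte_constant_A}, and that argument is precisely your combination of the global bound (\ref{gloabl_w2p_estiamte_constant_A}) with a duality localization in which each test function $v_h$ is replaced by a nodal interpolant $I_h(\chi v_h)\in V_h(B_{R'}(\boldx_{0}))$ agreeing with $v_h$ on the support of $\nabla w_h$, using (\ref{nondiv_fem_constant_A}), the $L^{p'}$-stability of $P_h$ from Lemma~\ref{lemma_L2_proj_props}, and the layer $d\geq 2h$ for the cut-off. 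Your write-up just makes explicit the cited proof, with constants visibly independent of $R$ and $R'$ as required.
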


\begin{proof}
By (\ref{nondiv_fem_constant_A}), $\mathcal{L}_{\tilde{A}_{0},h}w_{h}$ is the same as 
$\mathcal{L}_{0,h}$ in \cite[Lemma~$2.7$]{FHM2017}. Therefore, by replacing \cite[Lemma~$2.6$]{FHM2017}
by Lemma~\ref{lemma_gloabl_w2p_estiamte_constant_A}, the proof in \cite[Lemma~$2.7$]{FHM2017} 
is also valid for this Lemma. It is easy to verify that the proof of \cite[Lemma~$2.7$]{FHM2017} 
is independent of the choice of $\boldx_{0}$, $R$ and $R^{\prime}$.
\end{proof}

\subsubsection{Stability estimates for linear elliptic PDE in non-divergence form with uniformly continuous $A$}

Analogues of Lemma~\ref{lemma_diff_A_variable_constant}, Lemma~\ref{lemma_local_estimate_variable_A} 
and Lemma~\ref{lemma_gloabl_estimate_with_reaction} can be found in \cite{FHM2017}.

We define $\mathcal{L}_{\tilde{A},h}: W_{h}^{1} \rightarrow V_{h}$ as 
\begin{align}
\label{L_h_operator}
(\mathcal{L}_{\tilde{A},h}w, v_{h})_{\Omega} = \int_{\Omega}(\tilde{A}: \overline{\nabla}_{h}(\nabla u_{h}))v_{h} d\boldx, 
\qquad \forall v_{h} \in V_{h}.
\end{align}
Here we recall that $W_{h}^{1} = W_{0}^{1,1}(\Omega)\cap W^{2,1}(\mathcal{T}_{h})$.

For $\boldx_{0} \in \overline{\Omega}$ and $R > 0$, we define 
\begin{align*}
B_{R}(\boldx_{0}) := \{ \boldx \in \Omega: \vert \boldx - \boldx_{0} \vert < R \} \subset \Omega.
\end{align*}

\begin{lemma}
\label{lemma_diff_A_variable_constant}
We assume $\tilde{A} \in [C^{0}(\overline{\Omega})]^{d \times d}$.
Let $p \in (1,+\infty)$. For any $\delta > 0$, there exists $R_{\delta}>0$ and $h_{\delta} > 0$ such that for 
any $\boldx_{0}\in \overline{\Omega}$ with $\tilde{A}_{0} = \tilde{A}(\boldx_{0})$, 
\begin{align*}
\Vert (\mathcal{L}_{\tilde{A},h} - \mathcal{L}_{\tilde{A}_{0},h})w\Vert_{L_{h}^{p}(B_{R_{\delta}}(\boldx_{0}))} 
\leq \delta \Vert w\Vert_{W_{h}^{2,p}(B_{R_{\delta}}(\boldx_{0}))}, \qquad \forall 
w \in W_{h}^{(p)}, h\leq h_{\delta}. 
\end{align*}
%Here $B_{R_{\delta}}(\boldx_{0}) := \{ \boldx \in \Omega: \vert \boldx - \boldx_{0} 
%\vert < R_{\delta} \} \subset \Omega$.
\end{lemma}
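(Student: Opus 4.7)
The plan is to combine the uniform continuity of $\tilde{A}$ with a localized $L^p$-stability estimate for the discrete gradient operator $\overline{\nabla}_h$. First, fix $\delta > 0$. Since $\tilde{A}\in [C^0(\overline{\Omega})]^{d\times d}$ and $\overline{\Omega}$ is compact, $\tilde{A}$ is uniformly continuous on $\overline{\Omega}$; hence for any $\epsilon>0$ there exists $R_\epsilon>0$ such that $|\tilde{A}(\boldx) - \tilde{A}(\boldy)|<\epsilon$ whenever $\boldx,\boldy\in\overline{\Omega}$ and $|\boldx-\boldy|<R_\epsilon$. The value of $\epsilon$, and hence of $R_\delta:=R_\epsilon$, will be fixed at the end.

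By the definition (\ref{discrete_zero_order_norm}), any $v_h\in V_h(B_{R_\delta}(\boldx_0))$ is supported in $B_{R_\delta}(\boldx_0)$, so
\begin{align*}
((\mathcal{L}_{\tilde{A},h} - \mathcal{L}_{\tilde{A}_0,h})w, v_h)_\Omega = \int_{B_{R_\delta}(\boldx_0)} ((\tilde{A} - \tilde{A}_0):\overline{\nabla}_h(\nabla w))\, v_h\, d\boldx,
\end{align*}
and H\"older's inequality together with the choice of $R_\delta$ gives
\begin{align*}
|((\mathcal{L}_{\tilde{A},h} - \mathcal{L}_{\tilde{A}_0,h})w, v_h)_\Omega| \leq \epsilon\, \|\overline{\nabla}_h(\nabla w)\|_{L^p(B_{R_\delta}(\boldx_0))}\, \|v_h\|_{L^{p'}(B_{R_\delta}(\boldx_0))}.
\end{align*}

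The central step is then the localized $L^p$-stability bound
\begin{align*}
\|\overline{\nabla}_h(\nabla w)\|_{L^p(B_{R_\delta}(\boldx_0))} \leq C_\star\, \|w\|_{W^{2,p}_h(B_{R_\delta}(\boldx_0))}
\end{align*}
with $C_\star$ depending only on $p$, $r$, and the shape regularity of $\mathcal{T}_h$. Since $\overline{\nabla}_h(\nabla w)\in [\overline{V}_h]^{d\times d}$ is piecewise polynomial, by $L^p$--$L^{p'}$ duality restricted (via norm equivalence on $P_r$) to piecewise polynomial test functions, it suffices to control $(\overline{\partial}_{h,x_i}(\partial_{x_j}w), \varphi_h)_{\mathcal{T}_h}$ for $\varphi_h\in\overline{V}_h$ supported near $B_{R_\delta}(\boldx_0)$. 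Substituting (\ref{discrete_deri_equivalent}) splits this quantity into the piecewise Hessian contribution, bounded directly by $\|D_h^2 w\|_{L^p}\|\varphi_h\|_{L^{p'}}$, and the jump contribution $\langle[\partial_{x_j}w]n^{(i)}, \{\varphi_h\}\rangle_{\mathcal{F}_h^I}$. The latter is estimated by facewise H\"older, writing each product as $(h_F^{(1-p)/p}\|[\partial_{x_j}w]\|_{L^p(F)})(h_F^{(p-1)/p}\|\{\varphi_h\}\|_{L^{p'}(F)})$ and then applying H\"older across faces with exponents $(p,p')$; the discrete trace inequality $\|\{\varphi_h\}\|_{L^{p'}(F)}\leq C h_F^{-1/p'}\|\varphi_h\|_{L^{p'}(K^+\cup K^-)}$ reduces the test-side factor to $\|\varphi_h\|_{L^{p'}}$, while the $w$-side factor is precisely the weighted jump seminorm $\bigl(\sum_F h_F^{1-p}\|[\nabla w]\|^p_{L^p(F)}\bigr)^{1/p}$ appearing in $\|w\|_{W^{2,p}_h}$. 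Taking $h\leq h_\delta$ small enough ensures that every face touching the support of $\varphi_h$ lies inside $B_{R_\delta}(\boldx_0)$, preventing an unwanted enlargement of the domain.

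Combining the two ingredients and taking the supremum over $v_h\in V_h(B_{R_\delta}(\boldx_0))$ gives
\begin{align*}
\|(\mathcal{L}_{\tilde{A},h} - \mathcal{L}_{\tilde{A}_0,h})w\|_{L^p_h(B_{R_\delta}(\boldx_0))} \leq \epsilon\, C_\star\, \|w\|_{W^{2,p}_h(B_{R_\delta}(\boldx_0))}.
\end{align*}
Setting $\epsilon:=\delta/C_\star$ then fixes $R_\delta$ via the uniform continuity of $\tilde{A}$, and the resulting choice is uniform in $\boldx_0\in\overline{\Omega}$. The main obstacle is the localized $L^p$-stability of $\overline{\nabla}_h$: one must pair the jump term with exactly the right $h_F$-weights so that the $W^{2,p}_h$-seminorm emerges with a constant independent of $\boldx_0$ and $h$, and simultaneously control the one-element spill-over of test function supports near $\partial B_{R_\delta}(\boldx_0)$, which is the role of the threshold $h_\delta$.
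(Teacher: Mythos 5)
Your proposal follows essentially the same route as the paper: H\"older's inequality together with the uniform continuity of $\tilde{A}$ on a small ball, combined with a local stability bound for $\overline{\nabla}_{h}(\nabla w)$ in terms of $\Vert w\Vert_{W_{h}^{2,p}}$ (proved by duality, splitting via (\ref{discrete_deri_equivalent}) into the piecewise Hessian and the $h_{F}$-weighted jump terms), with the radius fixed at the end by the choice $\epsilon=\delta/C_{\star}$. The only inessential difference is the localization mechanism: since any $v_{h}\in V_{h}(B_{R_{\delta}}(\boldx_{0}))$ vanishes identically on every element not entirely contained in the ball, the paper restricts the stability estimate to those fully contained elements, whose faces lie in $\overline{B_{R_{\delta}}(\boldx_{0})}$ regardless of $h$, so the boundary ``spill-over'' you propose to control by shrinking $h$ never arises, and $h_{\delta}$ is needed only to ensure $V_{h}(B_{R_{\delta}}(\boldx_{0}))$ is nonempty.
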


\begin{proof}
We denote by $B_{t}:= \{ \boldx \in \Omega: \vert \boldx - \boldx_{0} \vert < t \} \subset \Omega$ 
for any $t>0$. For any $v_{h} \in V_{h}(B_{t})$, we have 
\begin{align*}
((\mathcal{L}_{\tilde{A},h} - \mathcal{L}_{\tilde{A}_{0},h})w, v_{h})_{\Omega} 
= \int_{\Omega}\left((\tilde{A}_{0} - \tilde{A}):\overline{\nabla}_{h}(\nabla w)\right)v_{h} d\boldx.
\end{align*}
We recall that $V_{h}(B_{t})=\{ v \in V_{h}: v|_{\Omega \setminus B_{t}} = 0\}$. 
We define $\mathcal{T}_{t,h} = \{ K \in \mathcal{T}_{h}: \mathcal{H}^{d}(K \setminus B_{t}) = 0 \}$, 
where $\mathcal{H}^{d}(\cdot)$ is the $d$-dimensional Hausdorff measure.

We define $p^{\prime}\in (1, + \infty)$ such that $\frac{1}{p} + \frac{1}{p^{\prime}} = 1$.  So we have 
\begin{align*}
& ((\mathcal{L}_{\tilde{A},h} - \mathcal{L}_{\tilde{A}_{0},h})w, v_{h})_{\Omega} 
= \Sigma_{K \in \mathcal{T}_{t,h}}\int_{K}\left((\tilde{A}_{0} - \tilde{A}):\overline{\nabla}_{h}(\nabla w)\right)
v_{h} d\boldx \\
\leq & \Vert \tilde{A}_{0} - \tilde{A} \Vert_{L^{\infty}(B_{t})} 
\left( \Sigma_{K \in \mathcal{T}_{t,h}} \Vert \overline{\nabla}_{h}(\nabla w)\Vert_{L^{p}(K)}^{p} \right)^{\frac{1}{p}} 
\Vert v_{h}\Vert_{L^{p^{\prime}}(\Omega)}.
\end{align*}

We denote by $\mathcal{F}_{t,h}$ the collection of all mesh interface of $\mathcal{T}_{t,h}$.  
By the construction of $\mathcal{T}_{t,h}$, we have  
\begin{align*}
\mathcal{H}^{d-1}(F \cap \overline{B_{t}}) = \mathcal{H}^{d-1}(F), 
\qquad \forall F \in \mathcal{F}_{t,h}.
\end{align*}
Then with Definition~\ref{def_discrete_deri} and (\ref{discrete_second_order_norm}), we have 
\begin{align*}
& \Sigma_{K \in \mathcal{T}_{t,h}} \Vert \overline{\nabla}_{h}(\nabla w)\Vert_{L^{p}(K)}^{p} \\ 
\leq & C_{*} \left(\Sigma_{K \in \mathcal{T}_{t,h}} \Vert D_{h}^{2} w\Vert_{L^{p}(K)}^{p}
+ \Sigma_{F \in (\mathcal{F}_{t,h} \cap \mathcal{F}_{h}^{I}) }h_{F}^{1-p}\Vert [\nabla w]\Vert_{L^{p}(F)}^{p} \right) 
\leq C_{*} \Vert w\Vert_{W_{h}^{2,p}(B_{t})}^{p},
\end{align*} 
where $\mathcal{F}_{h}^{I}$ is the collection of all interior mesh faces of $\mathcal{T}_{h}$.
The constant $C_{*}$ is independent of $\boldx_{0}$ and $t$. 

We choose $t_{0}>0$ small enough, which is independent of $h$, such that 
\begin{align*}
\Vert \tilde{A}_{0} - \tilde{A} \Vert_{L^{\infty}(B_{t_{0}})} \leq \dfrac{\delta}{C_{*} + 1}.
\end{align*}
Since $\tilde{A}\in [C(\overline{\Omega})]^{d \times d}$, $t_{0}$ can be chosen to be independent 
of $\boldx_{0} \in \overline{\Omega}$.

Then we have 
\begin{align*}
\vert ((\mathcal{L}_{\tilde{A},h} - \mathcal{L}_{\tilde{A}_{0},h})w, v_{h})_{\Omega} \vert
\leq \delta \Vert w\Vert_{W_{h}^{2,p}(B_{t_{0}})}^{p} \Vert v_{h}\Vert_{L^{p^{\prime}}(\Omega)}, 
\qquad \forall v_{h} \in V_{h}(B_{t_{0}}).
\end{align*}
We choose $h_{\delta} = \frac{t_{0}}{4}$ such that for any $0<h\leq h_{\delta}$, $V_{h}(B_{t_{0}})$ is not empty. 
Then we can conclude that the proof is complete.
\end{proof}

\begin{lemma}
\label{lemma_local_estimate_variable_A}
We assume $\tilde{A} \in [C^{0}(\overline{\Omega})]^{d \times d}$ is uniformly elliptic.
There exists $R_{1} > 0$ and $h_{1} > 0$ such that for any $\boldx_{0} \in \overline{\Omega}$, 
\begin{align}
\label{local_estimate_variable_A}
\lambda_{0}\Vert w_{h}\Vert_{W_{h}^{2,p}(B_{R_{1}}(\boldx_{0}))} 
\leq \Vert \mathcal{L}_{\tilde{A},h}w_{h} \Vert_{L_{h}^{p}(B_{2R_{1}}(\boldx_{0}))}, 
\qquad \forall w_{h} \in V_{h}(B_{R_{1}}(\boldx_{0})), h < h_{1}.
\end{align}

%If the domain $\Omega$ is a convex polyhedra in $\mathbb{R}^{d}$ ($d=2,3$), the estimate (\ref{local_estimate_variable_A}) 
%holds for any $p \in (1,2]$. If the domain is a Lipschitz polygon in $\mathbb{R}^{2}$, the estimate 
%(\ref{local_estimate_variable_A}) holds for any $p \in (\frac{4}{3} - \epsilon_{1}, 
% \frac{4}{3} + \epsilon_{2})$ where $\epsilon_{1}$ and $\epsilon_{2}$ are the same 
% as those in Lemma~\ref{lemma_Poisson_regularity}. 
Here $p$ is valid in the range described in (\ref{p_range2}).
The constant $\lambda_{0}$ may depend on $\tilde{A}(\boldx_{0})$.
\end{lemma}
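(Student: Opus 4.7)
The plan is to use a standard freezing-coefficients argument: localize at a point $\boldx_0 \in \overline{\Omega}$, replace $\tilde A$ by its frozen value $\tilde A_0 = \tilde A(\boldx_0)$, invoke Lemma~\ref{lemma_local_estimate_constant_A} for the constant-coefficient operator $\mathcal{L}_{\tilde A_0,h}$, and absorb the perturbation $\mathcal{L}_{\tilde A,h} - \mathcal{L}_{\tilde A_0,h}$ using Lemma~\ref{lemma_diff_A_variable_constant}. Since $\tilde A$ is uniformly continuous on $\overline{\Omega}$ and the radii supplied by both lemmas are independent of $\boldx_0$, the resulting $R_1$ and $h_1$ will be uniform in $\boldx_0 \in \overline{\Omega}$.

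More concretely, first I will fix $\delta>0$ so that $C\,\delta \le \tfrac{1}{2}$, where $C$ is the constant in the estimate~(\ref{local_estimate_constant_A}) of Lemma~\ref{lemma_local_estimate_constant_A} applied with the matrix $\tilde A_0 = \tilde A(\boldx_0)$ (whose ellipticity constants are controlled uniformly in $\boldx_0$ because $\tilde A$ is continuous on the compact set $\overline{\Omega}$ and uniformly elliptic). This fixes $R_\delta$ and $h_\delta$ through Lemma~\ref{lemma_diff_A_variable_constant}. Then I will set $R_1 := R_\delta/2$ and $h_1 := \min\{h_\delta,h_0,R_1/2\}$, with $h_0$ from Lemma~\ref{lemma_local_estimate_constant_A}. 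For any $w_h\in V_h(B_{R_1}(\boldx_0))$, applying Lemma~\ref{lemma_local_estimate_constant_A} with $R=R_1$ and $R'=2R_1$ (so $d=R_1 \ge 2h_1$) gives
\begin{equation*}
\|w_h\|_{W_h^{2,p}(B_{R_1}(\boldx_0))} \;\le\; C\,\|\mathcal{L}_{\tilde A_0,h}w_h\|_{L_h^p(B_{2R_1}(\boldx_0))}.
\end{equation*}
Splitting $\mathcal{L}_{\tilde A_0,h}w_h = \mathcal{L}_{\tilde A,h}w_h + (\mathcal{L}_{\tilde A_0,h}-\mathcal{L}_{\tilde A,h})w_h$ and applying Lemma~\ref{lemma_diff_A_variable_constant} on $B_{2R_1}(\boldx_0)=B_{R_\delta}(\boldx_0)$ produces
\begin{equation*}
\|w_h\|_{W_h^{2,p}(B_{R_1}(\boldx_0))}\le C\,\|\mathcal{L}_{\tilde A,h}w_h\|_{L_h^p(B_{2R_1}(\boldx_0))}+C\delta\,\|w_h\|_{W_h^{2,p}(B_{2R_1}(\boldx_0))}.
\end{equation*}

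The key observation that closes the argument is that the semi-norm on the right, restricted to the larger ball $B_{2R_1}(\boldx_0)$, equals the semi-norm on $B_{R_1}(\boldx_0)$ whenever $w_h\in V_h(B_{R_1}(\boldx_0))$. Indeed, the elementwise Hessian $D_h^2 w_h$ vanishes on any element outside $B_{R_1}(\boldx_0)$, and on any interior face $F$ with $[\nabla w_h]|_F\not\equiv 0$ at least one adjoining element is contained in $B_{R_1}(\boldx_0)$, so that $F$ meets $\overline{B_{R_1}(\boldx_0)}$ and contributes identically to the face term of both semi-norms in~(\ref{discrete_second_order_norm}). Using this identity and $C\delta\le\tfrac12$, the perturbation term is absorbed into the left-hand side, yielding the desired inequality with $\lambda_0 = 1/(2C)$.

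The main obstacle is making sure the absorption step is legitimate, i.e. that the perturbation bound from Lemma~\ref{lemma_diff_A_variable_constant} is given in terms of a $W_h^{2,p}$ semi-norm on a ball that is no larger than the one on the left-hand side once one accounts for the support condition $w_h\in V_h(B_{R_1}(\boldx_0))$. The support-locality observation in the previous paragraph handles this; without it one would have to iterate the localization (via a chain of nested balls) to recover the same bound, which would still work but would lengthen the argument. All other ingredients — the uniformity of $R_\delta,h_\delta$ in $\boldx_0$, and the admissible range of $p$ given in~(\ref{p_range2}) — are inherited directly from Lemmas~\ref{lemma_local_estimate_constant_A} and~\ref{lemma_diff_A_variable_constant}.
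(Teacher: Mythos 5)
Your proposal is correct and follows essentially the same route the paper intends: the paper simply declares the lemma an immediate consequence of Lemma~\ref{lemma_local_estimate_constant_A} (frozen coefficients, with $R'=2R_1$) and Lemma~\ref{lemma_diff_A_variable_constant} (smallness of the perturbation on $B_{R_\delta}$), which is exactly your freezing-and-absorption argument, with the support observation $\Vert w_h\Vert_{W_h^{2,p}(B_{2R_1})}=\Vert w_h\Vert_{W_h^{2,p}(B_{R_1})}$ for $w_h\in V_h(B_{R_1}(\boldx_0))$ justifying the absorption. Your compactness remark that the constants (and $h_0$) of the constant-coefficient lemma can be taken uniformly over $\tilde A(\boldx_0)$, $\boldx_0\in\overline\Omega$, is the same tacit uniformity the paper relies on to get $R_1,h_1$ independent of $\boldx_0$.
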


\begin{proof}
The proof is an immediate consequence of Lemma~\ref{lemma_local_estimate_constant_A} 
and Lemma~\ref{lemma_diff_A_variable_constant}.
\end{proof}

\begin{lemma}
\label{lemma_gloabl_estimate_with_reaction}
If $\tilde{A} \in [C^{0}(\overline{\Omega})]^{d\times d}$ is uniformly elliptic, then 
there is $h_{2} > 0$ which may depend on $p$, such that for all $h \in (0, h_{2})$,
\begin{align}
\label{gloabl_estimate_with_reaction}
\Vert w_{h} \Vert_{W_{h}^{2,p}(\Omega)} \leq C \left( \Vert \mathcal{L}_{\tilde{A},h}w_{h}\Vert_{L_{h}^{p}(\Omega)} 
+ \Vert w_{h}\Vert_{W^{1,p}(\Omega)} \right), \qquad \forall w_{h} \in V_{h}.
\end{align}
Here $p$ is valid in the range described in (\ref{p_range2}).
%If the domain $\Omega$ is a convex polyhedra in $\mathbb{R}^{d}$ ($d=2,3$), the estimate 
%(\ref{gloabl_estimate_with_reaction}) holds for any $p \in (1,2]$. If the domain is a Lipschitz 
%polygon in $\mathbb{R}^{2}$, the estimate 
%(\ref{gloabl_estimate_with_reaction}) holds for any 
%$p \in (\frac{4}{3} - \epsilon_{1}, \frac{4}{3} + \epsilon_{2})$ 
%where $\epsilon_{1}$ and $\epsilon_{2}$ are the same 
% as those in Lemma~\ref{lemma_Poisson_regularity}.
\end{lemma}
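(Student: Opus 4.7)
The plan is to imitate the localization strategy of \cite[Lemma~$3.4$]{FHM2017}, patching the local stability estimate in Lemma~\ref{lemma_local_estimate_variable_A} via a finite cover of $\overline{\Omega}$ and a smooth partition of unity, with commutator terms absorbed into $\Vert w_h\Vert_{W^{1,p}(\Omega)}$.

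First, Lemma~\ref{lemma_local_estimate_variable_A} provides $R_1>0$ and $h_1>0$ so that the inequality~(\ref{local_estimate_variable_A}) holds at every point of $\overline{\Omega}$. Compactness of $\overline{\Omega}$ supplies a finite cover $\{B_{R_1}(\boldx_i)\}_{i=1}^N$ and a smooth partition of unity $\{\eta_i\}_{i=1}^N$ subordinate to $\{B_{2R_1}(\boldx_i)\}$ with $\Vert\eta_i\Vert_{W^{2,\infty}(\Omega)}$ uniformly bounded in $i$.

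Since $\eta_i w_h\notin V_h$, I next define $\phi_{h,i}\in V_h(B_{R_1}(\boldx_i))$ as a discrete localization of $w_h$, for example a Scott--Zhang type $H^1$-conforming interpolant of $\eta_i w_h$ truncated at nodes outside $B_{R_1}(\boldx_i)$. For $h\leq h_2\leq h_1$ sufficiently small, $\phi_{h,i}$ meets the support requirement of Lemma~\ref{lemma_local_estimate_variable_A} and satisfies the interpolation bound $\Vert\eta_i w_h-\phi_{h,i}\Vert_{W_h^{2,p}(\Omega)}\leq C\Vert w_h\Vert_{W^{1,p}(\Omega)}$. Applying Lemma~\ref{lemma_local_estimate_variable_A} to $\phi_{h,i}$ and writing $\mathcal{L}_{\tilde{A},h}\phi_{h,i}=\eta_i\mathcal{L}_{\tilde{A},h}w_h+\mathcal{R}_{h,i}$, the task reduces to bounding the commutator remainder by $\Vert\mathcal{R}_{h,i}\Vert_{L_h^p(B_{2R_1}(\boldx_i))}\leq C\Vert w_h\Vert_{W^{1,p}(\Omega)}$. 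Expanding $\mathcal{R}_{h,i}$ through the definition~(\ref{discrete_deri}) of $\overline{\nabla}_h$ and using that $\eta_i\in C^2(\overline{\Omega})$ carries no interface jumps, the volume contributions reduce to bounded multiples of $\nabla w_h$ and $w_h$ against $\nabla\eta_i$ and $D^2\eta_i$; the remaining interface contributions are rewritten via~(\ref{discrete_deri_equivalent}) and controlled by a discrete trace inequality together with Lemma~\ref{lemma_H1_interpolation}.

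Finally, summing the decomposition $w_h=\sum_i \eta_i w_h$ over $i$ and collecting the estimates gives
\begin{align*}
\Vert w_h\Vert_{W_h^{2,p}(\Omega)} & \leq \sum_{i=1}^N\left(\Vert\phi_{h,i}\Vert_{W_h^{2,p}(B_{R_1}(\boldx_i))}+\Vert\eta_i w_h-\phi_{h,i}\Vert_{W_h^{2,p}(\Omega)}\right)\\
& \leq C\left(\Vert\mathcal{L}_{\tilde{A},h}w_h\Vert_{L_h^p(\Omega)}+\Vert w_h\Vert_{W^{1,p}(\Omega)}\right),
\end{align*}
which is~(\ref{gloabl_estimate_with_reaction}). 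The main obstacle is the commutator bound in the preceding step: because $\overline{\nabla}_h$ is defined weakly through the interface formulas~(\ref{discrete_deri}) and~(\ref{discrete_deri_equivalent}), no literal product rule is available for $\mathcal{L}_{\tilde{A},h}(\eta_i w_h)$. One must test against $V_h$, unfold $\eta_i$ into element and face integrals, and reassemble the result so that each boundary piece either cancels (because $[\eta_i]=0$) or is weighted by a bounded derivative of $\eta_i$. This extra bookkeeping, absent in the $C^0$-DG setting of \cite{FHM2017} where the second-order operator is explicit, is what makes the present argument substantially more technical.
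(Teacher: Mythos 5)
Your overall route is the same as the paper's (freeze-and-localize: cut off around a point, apply the local estimate of Lemma~\ref{lemma_local_estimate_variable_A}, control the commutator with the discrete gradient, then sum over a finite cover of $\overline{\Omega}$), but two steps as you describe them do not hold up.

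First, the localization construction is inconsistent with the support requirement of Lemma~\ref{lemma_local_estimate_variable_A}. Your partition of unity is subordinate to $\{B_{2R_1}(\boldx_i)\}$, so $\eta_i w_h$ is genuinely nonzero on $B_{2R_1}(\boldx_i)\setminus B_{R_1}(\boldx_i)$; truncating a Scott--Zhang interpolant of $\eta_i w_h$ at nodes outside $B_{R_1}(\boldx_i)$ therefore discards an $O(1)$ piece of $\eta_i w_h$ on that annulus, and the claimed bound $\Vert \eta_i w_h-\phi_{h,i}\Vert_{W_h^{2,p}(\Omega)}\leq C\Vert w_h\Vert_{W^{1,p}(\Omega)}$ fails: the discarded piece has $W_h^{2,p}$-size comparable to $\Vert w_h\Vert_{W_h^{2,p}}$ on the annulus, which by inverse estimates is only controlled by $h^{-1}\Vert w_h\Vert_{W^{1,p}}$, not by $\Vert w_h\Vert_{W^{1,p}}$. (Without truncation the interpolant is supported in a neighbourhood of $B_{2R_1}$, violating the hypothesis of the local lemma.) The paper resolves this by arranging the radii the other way around: the cutoff $\eta$ equals $1$ on $B_{4R/3}$ and vanishes outside $B_{5R/3}$, so that the \emph{nodal} interpolant $I_h(\eta w_h)$ automatically lies in $V_h(B_{11R/6})$ for $h<R/24$, the dual norm is taken on $B_{2R}$, superapproximation \cite[Lemma~2.4]{FHM2017} gives $\Vert I_h(\eta w_h)-\eta w_h\Vert_{W_h^{2,p}(\Omega)}\leq CR^{-2}\Vert w_h\Vert_{W^{1,p}(B_{2R})}$, and $\overline{\Omega}$ is covered by the inner balls; no partition of unity or truncation is needed (one simply sums $\Vert w_h\Vert^p_{W_h^{2,p}(S_j)}$ over the finite cover).

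Second, the commutator bound $\Vert\mathcal{R}_{h,i}\Vert_{L_h^p(B_{2R_1})}\leq C\Vert w_h\Vert_{W^{1,p}(\Omega)}$ is the technical heart of the lemma and your sketch omits the two mechanisms that actually close it. Because $\Vert\mathcal{L}_{\tilde A,h}w_h\Vert_{L_h^p(B_{2R})}$ only tests against $V_h(B_{2R})$, the cutoff has to be moved onto the test function: the paper compares $\eta v_h$ with the element-wise constant $\overline{\eta}_h v_h$ and with $I_h(\eta v_h)\in V_h(B_{11R/6})$, and only then invokes the localized dual norm of $\mathcal{L}_{\tilde A,h}w_h$ (your alternative, writing $(\eta_i\mathcal{L}_{\tilde A,h}w_h,v_h)$ directly, needs the equivalence $\Vert g\Vert_{L^p}\leq C\Vert g\Vert_{L_h^p(\Omega)}$ for $g\in V_h$ via the $L^{p'}$-stability of $P_h$, which you do not address). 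Moreover, several face terms produced by Definition~\ref{def_discrete_deri}, e.g.\ $\langle\{\eta-\overline{\eta}_h\}[\nabla w_h]\otimes\boldsymbol{n},\{\underline{\varphi}_h\}\rangle_{\mathcal{F}_h^I}$, are naturally of size $hR^{-1}\Vert w_h\Vert_{W_h^{2,p}(B_{2R})}$ -- they reintroduce the norm being estimated -- and are only converted into $R^{-1}\Vert w_h\Vert_{W^{1,p}(B_{2R})}$ through the inverse estimate $\Vert w_h\Vert_{W_h^{2,p}(B)}\leq Ch^{-1}\Vert w_h\Vert_{W^{1,p}(B')}$ of \cite[Lemma~2.2]{FHM2017}; a discrete trace inequality and Lemma~\ref{lemma_H1_interpolation} alone do not achieve this. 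With the cutoff/interpolation arrangement of the paper and these two ingredients added, your argument becomes essentially the paper's proof.
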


\begin{proof}
We divided the proof into two steps.

Step $1$. For any $\boldx_{0} \in \overline{\Omega}$, we denote by $B_{R}:=\{ \boldx \in \Omega: 
\vert \boldx - \boldx_{0} \vert < R \}$ for any $R > 0$. Let $\eta \in C^{3}(\mathbb{R}^{d})$ be 
a cut-off function satisfying 
\begin{align}
\label{cut_off_func}
0\leq \eta \leq 1, \quad \eta|_{B_{\frac{4}{3}R}} = 1, \quad \eta|_{\mathbb{R}^{d} \setminus B_{\frac{5}{3}R}} = 0, 
\quad \Vert \eta \Vert_{W^{m,\infty}(\mathbb{R}^{d})} \leq C R^{-m} (m=0,1,2).
\end{align} 

From now on, we assume $h < \frac{R}{24}$. We define $\tilde{A}_{0} = \tilde{A}(\boldx_{0})\in \mathbb{R}^{d \times d}$. 

We choose $w_{h} \in V_{h}$ arbitrarily. We notice that $\eta w_{h} \in W_{h}^{(p)}(B_{\frac{5}{3}R})$ 
and $I_{h}(\eta w_{h}) \in V_{h}(B_{\frac{11}{6}R})=\{ v_{h} \in V_{h} : v_{h}|_{\Omega \setminus B_{\frac{11}{6} R}} = 0 \}$. 
Here $I_{h}: C^{0}(\overline{\Omega})\rightarrow V_{h}$ is the standard nodal interpolant onto $V_{h}$.

We notice that $I_{h}(\eta v_{h}) = \eta v_{h} = 0$ in $\Omega \setminus B_{\frac{11}{6}R}$ for any $v_{h} \in V_{h}$. 
Thus \cite[Lemma~$2.4$]{FHM2017} implies that for any $q \in (1, +\infty)$, there is a constant $C_{q}$ such that 
\begin{align}
\label{nodal_interpolation_error}
& \Vert I_{h}(\eta v_{h}) - \eta v_{h} \Vert_{L^{q}(\Omega)} \leq C_{q} h R^{-1} \Vert v_{h}\Vert_{L^{q}(B_{2R})},\\
\nonumber
& \Vert \nabla (I_{h}(\eta v_{h}) -\eta v_{h} ) \Vert_{L^{q}(\Omega)} 
\leq C_{q}R^{-1} \Vert v_{h} \Vert_{L^{q}( B_{2R} )}, \\
\nonumber
& \Vert I_{h}(\eta v_{h}) - \eta v_{h}\Vert_{W_{h}^{2,q}(\Omega)} 
\leq C_{q} R^{-2}\Vert v_{h} \Vert_{W^{1,q}(B_{2R})},
\quad \forall v_{h} \in V_{h}.
\end{align}

By Lemma~\ref{lemma_local_estimate_variable_A}, there are two constants $\lambda_{0}>0$ and $R > 0$ 
(we choose $R$ to be $R_{1}$ in Lemma~\ref{lemma_local_estimate_variable_A}) such that 
\begin{align}
\label{gloabl_estimate_with_reaction_ineq1}
\lambda_{0}\Vert w_{h} \Vert_{W_{h}^{p}(B_{R})}  
= \lambda_{0}\Vert I_{h}(\eta w_{h}) \Vert_{W_{h}^{p}(B_{R})} 
\leq \Vert \mathcal{L}_{\tilde{A},h}(I_{h}(\eta w_{h})) \Vert_{L_{h}^{p}(B_{2R})}. 
\end{align}
Here the constant $\lambda_{0}$ may depend on $\tilde{A}(\boldx_{0})$. 

By (\ref{gloabl_estimate_with_reaction_ineq1},\ref{discrete_zero_order_norm}) 
and Definition~\ref{def_discrete_deri}, we have 
\begin{align*}
& \lambda_{0}\Vert w_{h} \Vert_{W_{h}^{p}(B_{R})}  
= \lambda_{0}\Vert I_{h}(\eta w_{h}) \Vert_{W_{h}^{p}(B_{R})} \\
\leq & \Vert \mathcal{L}_{\tilde{A},h}(\eta w_{h}) \Vert_{L_{h}^{p}(B_{2R})} 
+ \Vert \mathcal{L}_{\tilde{A},h}(I_{h}(\eta w_{h})-\eta w_{h}) \Vert_{L_{h}^{p}(B_{2R})} \\
\leq & \Vert \mathcal{L}_{\tilde{A},h}(\eta w_{h}) \Vert_{L_{h}^{p}(B_{2R})} 
+ \Vert \mathcal{L}_{\tilde{A},h}(I_{h}(\eta w_{h})-\eta w_{h}) \Vert_{L^{p}(\Omega)} \\
\leq & \Vert \mathcal{L}_{\tilde{A},h}(\eta w_{h}) \Vert_{L_{h}^{p}(B_{2R})} 
+ \Vert \tilde{A} \Vert_{L^{\infty}(\Omega)}\Vert \overline{\nabla}_{h}\nabla 
(I_{h}(\eta w_{h})-\eta w_{h})\Vert_{L^{p}(\Omega)} \\
\leq & \Vert \mathcal{L}_{\tilde{A},h}(\eta w_{h}) \Vert_{L_{h}^{p}(B_{2R})} 
+ C\Vert \tilde{A} \Vert_{L^{\infty}(\Omega)}\Vert I_{h}(\eta w_{h})-\eta w_{h}\Vert_{W_{h}^{2,p}(\Omega)}
\end{align*}
Then by (\ref{nodal_interpolation_error}), we have 
\begin{align}
\label{gloabl_estimate_with_reaction_ineq2}
\lambda_{0}\Vert w_{h} \Vert_{W_{h}^{p}(B_{R})} 
\leq \Vert \mathcal{L}_{\tilde{A},h}(\eta w_{h}) \Vert_{L_{h}^{p}(B_{2R})} 
+ C R^{-2}\Vert w_{h}\Vert_{W^{1,p}(B_{2R})}.
\end{align}

We define $p^{\prime} \in (1, + \infty)$ such that $\frac{1}{p} + \frac{1}{p^{\prime}}$. 
By (\ref{discrete_zero_order_norm}), 
\begin{align*}
\Vert \mathcal{L}_{\tilde{A},h}(\eta w_{h}) \Vert_{L_{h}^{p}(B_{2R})} 
= \sup_{v_{h} \in V_{h}(B_{2R})} \dfrac{(\mathcal{L}_{\tilde{A},h}(\eta w_{h}), v_{h})_{\Omega}}
{\Vert v_{h}\Vert_{L^{p^{\prime}}(B_{2R})}},
\end{align*}
where $V_{h}(B_{2R}) = \{ v_{h} \in V_{h} : v_{h}|_{\Omega \setminus B_{2R}} = 0 \}$. 
We choose $v_{h} \in V_{h}(B_{2R})$ such that 
\begin{align}
\label{local_norm_variable_A}
\Vert \mathcal{L}_{\tilde{A},h}(\eta w_{h}) \Vert_{L_{h}^{p}(B_{2R})} 
= (\mathcal{L}_{\tilde{A},h}(\eta w_{h}), v_{h})_{\Omega}, \quad 
\Vert v_{h}\Vert_{L^{p^{\prime}}(B_{2R})} = \Vert v_{h}\Vert_{L^{p^{\prime}}(\Omega)} = 1.
\end{align}

By (\ref{local_norm_variable_A},\ref{L_h_operator}) and Definition~\ref{def_discrete_deri}, we have 
\begin{align}
\label{gloabl_estimate_with_reaction_ineq3}
& \Vert \mathcal{L}_{\tilde{A},h}(\eta w_{h}) \Vert_{L_{h}^{p}(B_{2R})} 
= (\tilde{A}:\overline{\nabla}_{h}(\nabla (\eta w_{h})), v_{h})_{\Omega} \\ 
\nonumber 
= & (\tilde{A}:\overline{\nabla}_{h}(\eta \nabla  w_{h}), v_{h})_{\Omega} 
+ (\tilde{A}:\overline{\nabla}_{h}(w_{h}\nabla \eta ), v_{h})_{\Omega}\\ 
\nonumber 
\leq & (\tilde{A}:\overline{\nabla}_{h}(\eta \nabla  w_{h}), v_{h})_{\Omega} 
+ \Vert \tilde{A}\Vert_{L^{\infty}(\Omega)} 
\Vert \overline{\nabla}_{h}(w_{h}\nabla \eta )\Vert_{L^{p}(\Omega)} \\ 
\nonumber
\leq & (\tilde{A}:\overline{\nabla}_{h}(\eta \nabla  w_{h}), v_{h})_{\Omega} 
+ C \Vert w_{h}\nabla \eta\Vert_{W_{h}^{2,p}(\Omega)} \\ 
\nonumber 
\leq & (\tilde{A}:\overline{\nabla}_{h}(\eta \nabla  w_{h}), v_{h})_{B_{2R}} 
+ C R^{-2}\Vert w_{h}\Vert_{W^{1,p}(\Omega)}.
\end{align}

Therefore, (\ref{gloabl_estimate_with_reaction_ineq2},\ref{gloabl_estimate_with_reaction_ineq3}) imply that 
\begin{align}
\label{gloabl_estimate_with_reaction_ineq4}
\lambda_{0}\Vert w_{h} \Vert_{W_{h}^{p}(B_{R})} 
\leq & (\tilde{A}:\overline{\nabla}_{h}(\eta \nabla  w_{h}), v_{h})_{\Omega} 
+ C R^{-2}\Vert w_{h}\Vert_{W^{1,p}(B_{2R})}.
\end{align}

We denote by $\overline{\eta}_{h} \in P_{0}(\mathcal{T}_{h})$ the element-wise average of $\eta$.
Then we have 
\begin{align}
\label{gloabl_estimate_with_reaction_ineq5}
& (\tilde{A}:\overline{\nabla}_{h}(\eta \nabla  w_{h}), v_{h})_{\Omega} \\
\nonumber
= & (\tilde{A}:\overline{\nabla}_{h}( (\eta - \overline{\eta}_{h} ) \nabla  w_{h}), v_{h})_{\Omega} 
+ (\tilde{A}:\left(\overline{\nabla}_{h}( \overline{\eta}_{h}  \nabla  w_{h}) 
- \overline{\eta}_{h}\overline{\nabla}_{h}( \nabla  w_{h})\right), v_{h})_{\Omega} \\ 
\nonumber
& \qquad + (\tilde{A}:\overline{\nabla}_{h}( \nabla  w_{h}), \overline{\eta}_{h} v_{h} - I_{h}(\eta v_{h}))_{\Omega}
+ (\tilde{A}:\overline{\nabla}_{h}( \nabla  w_{h}), I_{h}(\eta v_{h}))_{\Omega}
\end{align} 
 
We give estimates of all four terms on the right hand side of (\ref{gloabl_estimate_with_reaction_ineq5}) in the following. 

By (\ref{local_norm_variable_A}), we have 
\begin{align*}
(\tilde{A}:\overline{\nabla}_{h}( (\eta - \overline{\eta}_{h} ) \nabla  w_{h}), v_{h})_{\Omega} 
\leq \Vert \tilde{A} \Vert_{L^{\infty}(\Omega)} \Vert \overline{\nabla}_{h}( (\eta - \overline{\eta}_{h} ) 
\nabla  w_{h}) \Vert_{L^{p}(\Omega)}
\end{align*}
Since $\overline{\nabla}_{h}( (\eta - \overline{\eta}_{h} ) \nabla  w_{h}) \in [\overline{V}_{h}]^{d \times d}$, 
we denote by $\underline{\varphi}_{h} \in [\overline{V}_{h}]^{d \times d}$ such that 
\begin{align*}
\Vert \overline{\nabla}_{h}( (\eta - \overline{\eta}_{h} ) \nabla  w_{h}) \Vert_{L^{p}(\Omega)} 
= \left( \overline{\nabla}_{h}( (\eta - \overline{\eta}_{h} ) \nabla  w_{h}),  \underline{\varphi}_{h}\right)_{\Omega}, 
\qquad \Vert \underline{\varphi}_{h} \Vert_{L^{p^{\prime}}(\Omega)} = 1.
\end{align*} 
By Definition~\ref{def_discrete_deri} and the fact
$\eta = \overline{\eta}_{h} = 0$ in $\Omega \setminus B_{\frac{11}{6}R}$, we have 
\begin{align*}
&\left( \overline{\nabla}_{h}( (\eta - \overline{\eta}_{h} ) \nabla  w_{h}),  \underline{\varphi}_{h}\right)_{\Omega} 
= - \langle [(\eta - \overline{\eta}_{h} ) \nabla  w_{h}]\otimes \boldsymbol{n}, \{ \underline{\varphi}_{h} \} 
\rangle_{\mathcal{F}_{h}^{I}} 
+ (\nabla ((\eta - \overline{\eta}_{h} ) \nabla  w_{h}), \underline{\varphi}_{h})_{\mathcal{T}_{h}} \\
= & - \langle [\eta - \overline{\eta}_{h}]\{ \nabla  w_{h}\} \otimes \boldsymbol{n}, \{ \underline{\varphi}_{h} \} 
\rangle_{\mathcal{F}_{h}^{I}} 
- \langle \{\eta - \overline{\eta}_{h}\}  [\nabla  w_{h}]\otimes \boldsymbol{n}, \{ \underline{\varphi}_{h} \} 
\rangle_{\mathcal{F}_{h}^{I}}
+ (\nabla ((\eta - \overline{\eta}_{h} ) \nabla  w_{h}), \underline{\varphi}_{h})_{\mathcal{T}_{h}} \\
\leq & C \left( R^{-1} \Vert w_{h} \Vert_{W^{1,p}(B_{\frac{11}{6}R})} 
+ hR^{-1}\Vert w_{h}\Vert_{W_{h}^{2,p}(B_{\frac{11}{6} R})}\right) \Vert \underline{\varphi}_{h} 
\Vert_{L^{p^{\prime}}(\Omega)}.
\end{align*}
By \cite[Lemma~$2.2$]{FHM2017}, we have 
\begin{align*}
\Vert w_{h}\Vert_{W_{h}^{2,p}(B_{\frac{11}{6} R})} \leq Ch^{-1} \Vert w_{h}\Vert_{W^{1,p}(B_{2R})}.
\end{align*} 
Therefore, we have 
\begin{align}
\label{gloabl_estimate_with_reaction_ineq6}
(\tilde{A}:\overline{\nabla}_{h}( (\eta - \overline{\eta}_{h} ) \nabla  w_{h}), v_{h})_{\Omega}
\leq C R^{-1} \Vert w_{h} \Vert_{W^{1,p}(B_{2R})}.
\end{align}

By (\ref{local_norm_variable_A}), we have 
\begin{align*}
(\tilde{A}:\left(\overline{\nabla}_{h}( \overline{\eta}_{h}  \nabla  w_{h}) 
- \overline{\eta}_{h}\overline{\nabla}_{h}( \nabla  w_{h})\right), v_{h})_{\Omega} 
\leq \Vert \tilde{A}\Vert_{L^{\infty}(\Omega)} 
\Vert \overline{\nabla}_{h}( \overline{\eta}_{h}  \nabla  w_{h}) 
- \overline{\eta}_{h}\overline{\nabla}_{h}( \nabla  w_{h})\Vert_{L^{p}(\Omega)}.
\end{align*}
We notice $\overline{\nabla}_{h}( \overline{\eta}_{h}  \nabla  w_{h}) 
- \overline{\eta}_{h}\overline{\nabla}_{h}( \nabla  w_{h}) \in [\overline{V}_{h}]^{d\times d}$. 
we denote by $\underline{\varphi}_{h} \in [\overline{V}_{h}]^{d \times d}$ such that 
\begin{align*}
\Vert \overline{\nabla}_{h}( \overline{\eta}_{h}  \nabla  w_{h}) 
- \overline{\eta}_{h}\overline{\nabla}_{h}( \nabla  w_{h})\Vert_{L^{p}(\Omega)} 
= \left(\overline{\nabla}_{h}( \overline{\eta}_{h}  \nabla  w_{h}) 
- \overline{\eta}_{h}\overline{\nabla}_{h}( \nabla  w_{h}), \underline{\varphi}_{h} \right)_{\Omega}, 
\qquad \Vert \underline{\varphi}_{h} \Vert_{L^{p^{\prime}}(\Omega)} = 1.
\end{align*}
Since $\overline{\eta}_{h}\underline{\varphi}_{h} \in [\overline{V}_{h}]^{d \times d}$, 
Definition~\ref{def_discrete_deri} implies that  
\begin{align*}
& \left(\overline{\nabla}_{h}( \overline{\eta}_{h}  \nabla  w_{h}) 
- \overline{\eta}_{h}\overline{\nabla}_{h}( \nabla  w_{h}), \underline{\varphi}_{h} \right)_{\Omega} \\
= & - \langle [\overline{\eta}_{h}  \nabla  w_{h}] \otimes \boldsymbol{n}, 
\{ \underline{\varphi}_{h} \} \rangle_{\mathcal{F}_{h}^{I}} 
+ \langle [\nabla  w_{h}] \otimes \boldsymbol{n}, 
\{ \overline{\eta}_{h} \underline{\varphi}_{h} \} \rangle_{\mathcal{F}_{h}^{I}} \\
= & - \langle [\overline{\eta}_{h}] \{ \nabla  w_{h}\} \otimes \boldsymbol{n}, 
\{ \underline{\varphi}_{h} \} \rangle_{\mathcal{F}_{h}^{I}} 
+ \frac{1}{4}\langle [\nabla  w_{h}] \otimes \boldsymbol{n}, 
[ \overline{\eta}_{h}] [\underline{\varphi}_{h} ] \rangle_{\mathcal{F}_{h}^{I}}.
\end{align*}
Then by discrete trace inequality and the fact
$\eta = \overline{\eta}_{h} = 0$ in $\Omega \setminus B_{\frac{11}{6}R}$, we have 
\begin{align}
\label{gloabl_estimate_with_reaction_ineq7}
(\tilde{A}:\left(\overline{\nabla}_{h}( \overline{\eta}_{h}  \nabla  w_{h}) 
- \overline{\eta}_{h}\overline{\nabla}_{h}( \nabla  w_{h})\right), v_{h})_{\Omega} 
\leq C R^{-1} \Vert w_{h}\Vert_{W^{1,p}(B_{2R})}.
\end{align}

By the fact $\eta = \overline{\eta}_{h} = 0$ in $\Omega \setminus B_{\frac{11}{6}R}$ and Definition~\ref{def_discrete_deri},
\begin{align*}
& (\tilde{A}:\overline{\nabla}_{h}( \nabla  w_{h}), \overline{\eta}_{h} v_{h} - I_{h}(\eta v_{h}))_{\Omega} \\
\leq & \Vert \tilde{A} \Vert_{L^{\infty}(\Omega)} \Vert \overline{\nabla}_{h}( \nabla  w_{h}) \Vert_{L^{p}(B_{\frac{11}{6}R})} 
\Vert \overline{\eta}_{h} v_{h} - I_{h}(\eta v_{h}) \Vert_{L^{p^{\prime}}(\Omega)} \\
\leq & C \Vert w_{h}\Vert_{W_{h}^{2,p}(B_{\frac{23}{12}}R)}
\Vert \overline{\eta}_{h} v_{h} - I_{h}(\eta v_{h}) \Vert_{L^{p^{\prime}}(\Omega)}.
\end{align*}
By \cite[Lemma~$2.2$]{FHM2017}, we have 
\begin{align*}
\Vert w_{h} \Vert_{W_{h}^{2,p}(B_{\frac{23}{12} R})} \leq Ch^{-1} \Vert w_{h}\Vert_{W^{1,p}(B_{2R})}.
\end{align*}
So we have 
\begin{align*}
(\tilde{A}:\overline{\nabla}_{h}( \nabla  w_{h}), \overline{\eta}_{h} v_{h} - I_{h}(\eta v_{h}))_{\Omega} 
\leq C h^{-1}\Vert w_{h}\Vert_{W^{1,p}(B_{2R})} 
\Vert \overline{\eta}_{h} v_{h} - I_{h}(\eta v_{h}) \Vert_{L^{p^{\prime}}(\Omega)}.
\end{align*}
By (\ref{nodal_interpolation_error},\ref{local_norm_variable_A}), we have 
\begin{align}
\label{gloabl_estimate_with_reaction_ineq8} 
& (\tilde{A}:\overline{\nabla}_{h}( \nabla  w_{h}), \overline{\eta}_{h} v_{h} - I_{h}(\eta v_{h}))_{\Omega} \\
\nonumber
\leq & C R^{-1} \Vert w_{h}\Vert_{W^{1,p}(B_{2R})} \Vert v_{h} \Vert_{L^{p^{\prime}}(\Omega)} 
= C R^{-1} \Vert w_{h}\Vert_{W^{1,p}(B_{2R})}.
\end{align}

We notice $I_{h}(\eta v_{h}) \in V_{h}(B_{\frac{11}{6}R})$. Then by 
(\ref{L_h_operator},\ref{discrete_zero_order_norm},\ref{nodal_interpolation_error},\ref{local_norm_variable_A}), 
we have 
\begin{align}
\label{gloabl_estimate_with_reaction_ineq9}
& (\tilde{A}:\overline{\nabla}_{h}( \nabla  w_{h}), I_{h}(\eta v_{h}))_{\Omega} 
= (\mathcal{L}_{\tilde{A},h}w_{h}, I_{h}(\eta v_{h}))_{\Omega} \\
\nonumber
\leq & \Vert \mathcal{L}_{\tilde{A},h}w_{h}\Vert_{L_{h}^{p}(B_{2R})} 
\Vert I_{h}(\eta v_{h})\Vert_{L^{p^{\prime}}(\Omega)}
\leq C \Vert \mathcal{L}_{\tilde{A},h}w_{h}\Vert_{L_{h}^{p}(B_{2R})} \Vert v_{h} \Vert_{L^{p^{\prime}}(B_{2R})}
= C \Vert \mathcal{L}_{\tilde{A},h}w_{h}\Vert_{L_{h}^{p}(B_{2R})}.
\end{align}

According to (\ref{gloabl_estimate_with_reaction_ineq4},\ref{gloabl_estimate_with_reaction_ineq5},
\ref{gloabl_estimate_with_reaction_ineq6},\ref{gloabl_estimate_with_reaction_ineq7},\ref{gloabl_estimate_with_reaction_ineq8},
\ref{gloabl_estimate_with_reaction_ineq9}), we have 
\begin{align}
\label{gloabl_estimate_with_reaction_ineq10} 
\lambda_{0}\Vert w_{h}\Vert_{W_{h}^{2,p}(B_{R})} 
\leq C \left( \Vert \mathcal{L}_{\tilde{A},h}w_{h}\Vert_{L_{h}^{p}(B_{2R})} 
+ R^{-2} \Vert w_{h} \Vert_{W^{1,p}(B_{2R})} \right), \qquad \forall w_{h} \in V_{h}.
\end{align}
We recall that $R$ is chosen to be $R_{1}$ in Lemma~\ref{lemma_local_estimate_variable_A}. 
The constant $\lambda_{0}$ may depend on $\tilde{A}(\boldx_{0})$.  
 
Step $2$. Let $\{\boldx_{j}\}_{j=1}^{N} \subset \overline{\Omega}$ such that 
$\overline{\Omega} = \cup_{j=1}^{N}\overline{B}_{R_{j}^{\prime}}(\boldx_{j})$ 
where $R_{j}^{\prime}$ is the constant $R$ for $\boldx_{j}$ in (\ref{gloabl_estimate_with_reaction_ineq10}), 
and $B_{R_{j}^{\prime}}:= \{ \boldx \in \Omega : \vert \boldx - \boldx_{j} \vert 
< R_{j}^{\prime}\}$. Since $\overline{\Omega}$ is compact, $N$ is finite and independent of $h$.

We denote by $S_{j} = B_{R_{j}^{\prime}}$ and $\tilde{S}_{j} = B_{2R_{j}^{\prime}}
:= \{ \boldx \in \Omega : \vert \boldx - \boldx_{j} \vert < 2R_{j}^{\prime}\}$ for any $1\leq j \leq N$.

Since $R_{j}$ is independent of $h$ for any $1 \leq j \leq N$, (\ref{gloabl_estimate_with_reaction_ineq10}) 
implies that 
\begin{align*}
\Vert w_{h}\Vert_{W_{h}^{2,p}(\Omega)}^{p}  \leq \Sigma_{j=1}^{N} \Vert w_{h} \Vert_{W_{h}^{2,p}(S_{j})}^{p} 
\leq C \left( \Sigma_{j=1}^{N} \Vert \mathcal{L}_{\tilde{A},h} w_{h} \Vert_{L_{h}^{p}(\tilde{S}_{j})}^{p} 
+ \Vert w_{h}\Vert_{W^{1,p}(\Omega)}^{p} \right).
\end{align*} 

By (\ref{discrete_zero_order_norm}), 
\begin{align*}
& \Sigma_{j=1}^{N} \Vert \mathcal{L}_{\tilde{A},h} w_{h} \Vert_{L_{h}^{p}(\tilde{S}_{j})}^{p}
= \Sigma_{j=1}^{N} \left| \sup_{v_{h} \in V_{h}(\tilde{S}_{j})}\dfrac{(\mathcal{L}_{\tilde{A},h} 
w_{h}, v_{h})_{\Omega}}{\Vert v_{h}\Vert_{L^{p^{\prime}}(\tilde{S}_{j})}} \right|^{p}\\
\leq & N \left| \sup_{v_{h} \in V_{h}}\dfrac{(\mathcal{L}_{\tilde{A},h} 
w_{h}, v_{h})_{\Omega}}{\Vert v_{h}\Vert_{L^{p^{\prime}}(\Omega)}} \right|^{p} 
= N \Vert \mathcal{L}_{\tilde{A}, h}w_{h} \Vert_{L_{h}^{p}(\Omega)}.
\end{align*}
 
%Then by \cite[Lemma~$2.5$]{FHM2017}, we can conclude that the proof is complete. 
Therefore we can conclude that the proof is complete.
\end{proof}

\begin{lemma}
\label{lemma_conv_compactness} 
(Discrete compactness)
Let $1 < p < + \infty$ and $\Omega$ be a Lipschitz polyhedra in $\mathbb{R}^{d}$ 
($d=2,3$). We assume that there are $0 < M_{1} < +\infty$ and 
$\{w_{h} \in V_{h}\}_{h > 0}$ satisfying 
\begin{align*}
\Vert  w_{h} \Vert_{W_{h}^{2,p}(\Omega)} 
\leq M_{1}, \qquad \forall h > 0.
\end{align*} 
Then there is $w \in W^{2} \cap W_{0}^{1,p}(\Omega)$ and $ 0<M_{2}< + \infty$ 
such that 
\begin{align*}
\Vert w_{h} - w\Vert_{W^{1,p}(\Omega)} \rightarrow 0, \qquad 
\overline{\nabla}_{h}(\nabla w_{h}) \rightharpoonup D^{2}w \text{ in } 
[L^{p}(\Omega)]^{d \times d},
\end{align*}
for a subsequence of $h \rightarrow 0$; and 
\begin{align*}
\quad \Vert \overline{\nabla}_{h}(\nabla w) \Vert_{L^{p}(\Omega)} 
+ \Vert \overline{\nabla}_{h} (\nabla w_{h})\Vert_{L^{p}(\Omega)} 
\leq M_{2}, \qquad \forall h> 0.
\end{align*}
\end{lemma}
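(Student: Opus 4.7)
The lemma packages three separate assertions: a uniform $L^{p}$ bound on $\overline{\nabla}_{h}(\nabla w_{h})$, strong $W^{1,p}$-precompactness of the sequence $\{w_{h}\}$, and identification of the weak limit of $\overline{\nabla}_{h}(\nabla w_{h})$ with the distributional Hessian of the limit $w$. My plan is to prove them in this order: bound the discrete second-derivative reconstruction, then extract a strongly $W^{1,p}$-convergent subsequence via an $H^{1}$-conforming smoothing of $\nabla w_{h}$ and Rellich--Kondrachov, and finally pin down the weak limit using a carefully chosen conforming test function. Along the way, the regularity of $w$ is upgraded from $W^{1,p}$ to $W^{2,p}$ as a byproduct of the identification step.

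For the $L^{p}$ bound, I would test identity~(\ref{discrete_deri_equivalent}) componentwise with $v = \partial_{j} w_{h}$ against a dual-norm maximizer $\varphi_{h} \in \overline{V}_{h}$ satisfying $\|\varphi_{h}\|_{L^{p^{\prime}}(\Omega)} = 1$, and control the jump contribution via the discrete trace inequality $\|\varphi_{h}\|_{L^{p^{\prime}}(F)} \leq C h_{F}^{-1/p^{\prime}} \|\varphi_{h}\|_{L^{p^{\prime}}(\omega_{F})}$ to get
\begin{equation*}
\|\overline{\nabla}_{h}(\nabla w_{h}) - D_{h}^{2} w_{h}\|_{L^{p}(\Omega)} \leq C \Big(\Sigma_{F \in \mathcal{F}_{h}^{I}} h_{F}^{1-p}\|[\nabla w_{h}]\|_{L^{p}(F)}^{p}\Big)^{1/p}.
\end{equation*}
Combining with $\|D_{h}^{2} w_{h}\|_{L^{p}} \leq M_{1}$ yields $\|\overline{\nabla}_{h}(\nabla w_{h})\|_{L^{p}} \leq C M_{1}$. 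Applied to the eventual limit $w \in W^{2,p}$, the jump term vanishes since $[\partial_{j} w] = 0$ on every interior face, so $\overline{\nabla}_{h}(\nabla w)$ reduces to the element-wise $L^{2}$ projection of $D^{2} w$ onto $\overline{V}_{h}^{d \times d}$, with $L^{p}$ norm $\leq C\|D^{2} w\|_{L^{p}}$, delivering the combined bound $M_{2}$.

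For $W^{1,p}$-precompactness, let $\boldsymbol{\chi}_{h} \in H^{1}(\Omega) \cap \overline{V}_{h}^{d}$ denote the componentwise $H^{1}$-conforming interpolant of $\nabla w_{h}$ furnished by Lemma~\ref{lemma_H1_interpolation}, so that $\|\boldsymbol{\chi}_{h} - \nabla w_{h}\|_{L^{p}(\Omega)} \leq C h M_{1}$, and a piecewise inverse inequality gives $\|\nabla \boldsymbol{\chi}_{h} - D_{h}^{2} w_{h}\|_{L^{p}(\Omega)} \leq C M_{1}$, hence $\|\nabla \boldsymbol{\chi}_{h}\|_{L^{p}(\Omega)} \leq C M_{1}$. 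A standard discrete Poincaré--Friedrichs inequality (available since $w_{h} \in H_{0}^{1}(\Omega)$ and $\|w_{h}\|_{W_{h}^{2,p}}$ is controlled) bounds $\|\nabla w_{h}\|_{L^{p}}$, so $\{\boldsymbol{\chi}_{h}\}$ is uniformly bounded in $W^{1,p}(\Omega)$. Rellich--Kondrachov then extracts a subsequence with $\boldsymbol{\chi}_{h}$ converging strongly in $L^{p}$, whence $\nabla w_{h}$ also converges strongly in $L^{p}$; Poincaré's inequality promotes this to $w_{h} \to w$ in $W^{1,p}(\Omega)$ with $w \in W_{0}^{1,p}(\Omega)$.

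For identification, weak $L^{p}$-compactness supplies a further subsequence with $\overline{\nabla}_{h}(\nabla w_{h}) \rightharpoonup G$ in $[L^{p}(\Omega)]^{d \times d}$. Fix $\phi \in C_{c}^{\infty}(\Omega)$ and test definition~(\ref{discrete_deri}) with $v = \partial_{j} w_{h}$ and $\varphi_{h} = P_{h} \phi \in V_{h} \subset \overline{V}_{h}$. Since $V_{h} \subset H_{0}^{1}(\Omega)$, all jumps $[P_{h} \phi]$ vanish on both interior and boundary faces, leaving
\begin{equation*}
(\overline{\partial}_{h,x_{i}} \partial_{j} w_{h}, P_{h} \phi)_{\mathcal{T}_{h}} = -(\partial_{j} w_{h}, \partial_{x_{i}} P_{h} \phi)_{\Omega}.
\end{equation*}
Passing to the limit, the left-hand side tends to $(G_{ij}, \phi)_{\Omega}$ by weak-strong pairing together with $P_{h} \phi \to \phi$ in $L^{p^{\prime}}$, while the right-hand side tends to $-(\partial_{j} w, \partial_{x_{i}} \phi)_{\Omega}$ by strong $L^{p}$-convergence of $\nabla w_{h}$ combined with the $W^{1,p^{\prime}}$-stability of $P_{h}$ from Lemma~\ref{lemma_L2_proj_props}. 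Comparing yields $G_{ij} = \partial_{i} \partial_{j} w$ in the distributional sense, so $w \in W^{2,p}(\Omega)$ and $\overline{\nabla}_{h}(\nabla w_{h}) \rightharpoonup D^{2} w$ as asserted. The main obstacle is precisely this identification: naively testing with $\varphi_{h} \in \overline{V}_{h}$ leaves uncontrolled jump contributions in~(\ref{discrete_deri}), and it is essential to use the conforming subspace $V_{h} \subset \overline{V}_{h}$ so that interior face jumps and boundary traces vanish simultaneously, producing the clean integration-by-parts identity that drives the limit passage.
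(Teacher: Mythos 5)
Your proposal is correct and follows essentially the same route as the paper: a uniform $L^{p}$ bound on $\overline{\nabla}_{h}(\nabla w_{h})$ extracted from Definition~\ref{def_discrete_deri}, strong $W^{1,p}$ compactness via the $H^{1}$-conforming interpolant of $\nabla w_{h}$ from Lemma~\ref{lemma_H1_interpolation} together with Rellich--Kondrachov, and identification of the weak limit with $D^{2}w$ by testing against $P_{h}\phi \in V_{h}$ (so all face terms in (\ref{discrete_deri}) vanish) and using the stability of $P_{h}$ from Lemma~\ref{lemma_L2_proj_props}. The only differences are presentational: you make explicit the dual-maximizer/trace-inequality bound and the discrete Poincar\'e step that the paper leaves implicit.
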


\begin{proof}
Since $\Vert  w_{h} \Vert_{W_{h}^{2,p}(\Omega)} \leq M_{1}$ for any $h>0$, 
Lemma~\ref{lemma_discrete_norm_control} implies 
\begin{align*}
\Vert w_{h}\Vert_{W^{1,p}(\Omega)} \leq C M_{1}, \qquad \forall h>0.
\end{align*}
Thus there is $w \in W_{0}^{1,p}(\Omega)$ such that 
\begin{align}
\label{conv_w}
\Vert w_{h} - w\Vert_{L^{p}(\Omega)} \rightarrow 0, \qquad 
w_{h} \rightharpoonup w \text{ in } W_{0}^{1,p}(\Omega),
\end{align}
for a subsequence of $h \rightarrow 0$.
We denote by $\boldsymbol{\phi}_{h} \in [H^{1}(\Omega)\cap \overline{V}_{h}]^{d}$ the standard $H^{1}$-conforming 
averaging of $\nabla w_{h}$. By Lemma~\ref{lemma_H1_interpolation}, we have  
\begin{align}
\label{interpolation_gradient_w}
\Vert \boldsymbol{\phi}_{h} - \nabla w_{h} \Vert_{L^{p}(\Omega)} 
\leq C h \left( \Sigma_{F \in \mathcal{F}_{h}^{I}} h_{F}^{1-p}\Vert [\nabla w_{h}] \Vert_{L^{p}(F)}^{p} \right)^{\frac{1}{p}}.
\end{align}
(\ref{interpolation_gradient_w}) and the fact $ \Vert w_{h}\Vert_{W^{1,p}(\Omega)} + \Vert w_{h} \Vert_{W_{h}^{2,p}(\Omega)} 
\leq (1+C)M_{1}$ imply that there is $\boldsymbol{\phi} \in [W^{1,p}(\Omega)]^{d}$ such that 
\begin{align}
\label{conv_phi}
\Vert \boldsymbol{\phi}_{h} - \boldsymbol{\phi}\Vert_{L^{p}(\Omega)} 
+ \Vert \nabla w_{h} - \boldsymbol{\phi} \Vert_{L^{p}(\Omega)} \rightarrow 0, 
\quad \text{ and } \quad \boldsymbol{\phi}_{h} \rightharpoonup \boldsymbol{\phi} \text{ in } [W^{1,p}(\Omega)]^{d},
\end{align}
for a subsequence of $h \rightarrow 0$.
(\ref{conv_w},\ref{conv_phi}) imply that $\nabla w = \boldsymbol{\phi}$ almost everywhere in $\Omega$. Therefore, we have that 
$w \in W^{2,p}(\Omega) \cap W_{0}^{1,p}(\Omega)$ and 
\begin{align}
\label{conv_w_second_order}
\Vert w_{h} - w \Vert_{W^{1,p}(\Omega)} + \Vert \boldsymbol{\phi}_{h} - \nabla w \Vert_{L^{p}(\Omega)} \rightarrow 0, 
\quad \text{ and } \boldsymbol{\phi}_{h} \rightharpoonup \nabla w \text{ in } [W^{1,p}(\Omega)]^{d},
\end{align}
for a subsequence of $h \rightarrow 0$.

According to Definition~\ref{def_discrete_deri} with the fact $\Vert w_{h} \Vert_{W_{h}^{2,p}(\Omega)} \leq M_{1}
< + \infty$ and $w\in W^{2,p}(\Omega)$, there is a constant $M_{2}$ such that 
\begin{align}
\label{discrete_second_deri_bound}
\Vert \overline{\nabla}_{h}(\nabla w) \Vert_{L^{p}(\Omega)} 
+ \Vert \overline{\nabla}_{h} (\nabla w_{h})\Vert_{L^{p}(\Omega)} \leq M_{2} < + \infty.
\end{align}
We denote by $P_{h}$ the standard $L^{2}$-orthogonal projection onto $V_{h}$.
By Lemma~\ref{lemma_L2_proj_props}, there is a constant $\tilde{C}_{1}$ independent of $h$ 
such that for any $v \in W_{0}^{1,p^{\prime}}(\Omega)$, 
\begin{align}
\label{L2_projection_bound1}
\Vert P_{h} v\Vert_{L^{p^{\prime}}(\Omega)} \leq \tilde{C}_{1} \Vert v\Vert_{L^{p^{\prime}}(\Omega)}, 
\qquad \Vert P_{h} v\Vert_{W^{1,p^{\prime}}(\Omega)} \leq \tilde{C}_{1} \Vert v\Vert_{W^{1,p^{\prime}}(\Omega)}.
\end{align}

By (\ref{discrete_second_deri_bound}), there is $\underline{\psi} \in [L^{p}(\Omega)]^{d \times d}$ such that 
\begin{align*}
\overline{\nabla}_{h}(\nabla w_{h}) \rightharpoonup \underline{\psi} \in [L^{p}(\Omega)]^{d \times d},
\end{align*} 
for a subsequence of $h \rightarrow 0$.
On the other hand, we have that for any $\underline{\zeta} \in [C_{0}^{\infty}(\Omega)]^{d\times d}$,
\begin{align*}
(\overline{\nabla}_{h}(\nabla w_{h}), \underline{\zeta})_{\Omega} 
= (\overline{\nabla}_{h}(\nabla w_{h}), P_{h}(\underline{\zeta}))_{\Omega} 
+ (\overline{\nabla}_{h}(\nabla w_{h}), P_{h}(\underline{\zeta}) - \underline{\zeta})_{\Omega}.
\end{align*}
Since $P_{h}(\underline{\zeta}) \in [V_{h}]^{d \times d}$, then by Definition~\ref{def_discrete_deri}  
\begin{align*}
& (\overline{\nabla}_{h}(\nabla w_{h}), \underline{\zeta})_{\Omega} 
= -(\nabla w_{h}, \nabla \cdot P_{h}(\underline{\zeta}))_{\Omega} 
+ (\overline{\nabla}_{h}(\nabla w_{h}), P_{h}(\underline{\zeta}) - \underline{\zeta})_{\Omega} \\
= &  -(\nabla w_{h}, \nabla \cdot \underline{\zeta})_{\Omega} 
-  (\nabla w_{h}, \nabla \cdot (P_{h}(\underline{\zeta}) -  \underline{\zeta}))_{\Omega} 
+ (\overline{\nabla}_{h}(\nabla w_{h}), P_{h}(\underline{\zeta}) - \underline{\zeta})_{\Omega}
\end{align*} 
In the following of the proof, $\lim_{h\rightarrow 0}$ means the limit 
when a subsequence mentioned above of $h \rightarrow 0$.
By (\ref{conv_w_second_order}), we have 
\begin{align*}
\lim_{h \rightarrow 0} -(\nabla w_{h}, \nabla \cdot \underline{\zeta})_{\Omega} 
= -(\nabla w, \nabla\cdot \underline{\zeta})_{\Omega} 
= (D^{2}w, \underline{\zeta})_{\Omega}.
\end{align*}
By (\ref{discrete_second_deri_bound}) and the fact $\Vert w_{h} \Vert_{W_{h}^{2,p}(\Omega)} \leq M_{1}
< + \infty$, (\ref{L2_projection_bound1}) implies that 
\begin{align*}
\lim_{h\rightarrow 0} (\nabla w_{h}, \nabla \cdot (P_{h}(\underline{\zeta}) -  \underline{\zeta}))_{\Omega}  
= \lim_{h \rightarrow 0} (\overline{\nabla}_{h}(\nabla w_{h}), P_{h}(\underline{\zeta}) - \underline{\zeta})_{\Omega} = 0.
\end{align*}
Therefore, we have 
\begin{align*}
\lim_{h \rightarrow 0} (\overline{\nabla}_{h}(\nabla w_{h}), \underline{\zeta})_{\Omega} = 
(D^{2}w, \underline{\zeta})_{\Omega}, \qquad \forall \underline{\zeta} \in [C_{0}^{\infty}(\Omega)]^{d \times d}. 
\end{align*}
Therefore, we have $\underline{\psi} = D^{2}w$ almost everywhere in $\Omega$. So we have 
\begin{align}
\label{conv_w_second_order_weak} 
\overline{\nabla}_{h}(\nabla w_{h}) \rightharpoonup D^{2}w \in [L^{p}(\Omega)]^{d \times d},
\end{align}
for a subsequence of $h \rightarrow 0$.

According to (\ref{conv_w_second_order}, \ref{discrete_second_deri_bound}, 
\ref{conv_w_second_order_weak}), We can conclude the proof is complete.
\end{proof}

\begin{theorem}
\label{thm_global_estimate}
(Stability of the FEM (\ref{nondiv_fem})) 
Let $A \in [C^{0}(\overline{\Omega})]^{d\times d}$ uniformly elliptic, $\boldsymbol{b}\in 
[L^{\infty}(\Omega)]^{d}$, $c \in L^{\infty}(\Omega)$ with $c \leq 0$, 
 and $\Omega$ is a bounded open Lipschitz polyhedral domain in $\mathbb{R}^{d}$ ($d=2,3$).  
There is $h_{2} > 0$ which may depend on $p$, such that for all $h \in (0, h_{2})$,
\begin{align}
\label{global_estimate}
\Vert w_{h}\Vert_{W^{1,p}(\Omega)} + \Vert w_{h} \Vert_{W_{h}^{2,p}(\Omega)} 
\leq C \Vert \tilde{A}: \overline{\nabla}_{h}(\nabla w_{h}) +\tilde{\boldsymbol{b}}\cdot\nabla w_{h} 
+ \tilde{c} w_{h}\Vert_{L_{h}^{p}(\Omega)}, \quad \forall w_{h} \in V_{h}.
\end{align}
Here $p$ is valid in the range described in (\ref{p_range2}). 
$\tilde{A}$, $\tilde{\boldsymbol{b}}$ and $\tilde{c}$ are defined in (\ref{nondiv_pde}). 
The norm $\Vert \cdot \Vert_{L_{h}^{p}(\Omega)}$ is defined in (\ref{discrete_zero_order_norm}).
\end{theorem}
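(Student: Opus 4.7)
The plan is to combine Lemma~\ref{lemma_gloabl_estimate_with_reaction} with a contradiction argument driven by the discrete compactness of Lemma~\ref{lemma_conv_compactness}, using the global $W^{2,p}$ well-posedness of the continuous operator (Theorem~\ref{thm_global_A_continuous}) as the injectivity input. Note first that it suffices to show
\begin{align*}
\Vert w_h\Vert_{W_h^{2,p}(\Omega)} \leq C \Vert \mathcal{L}_{\tilde{A},h} w_h\Vert_{L_h^p(\Omega)} + C\Vert \tilde{\boldsymbol{b}}\cdot \nabla w_h + \tilde{c}w_h\Vert_{L^p(\Omega)},
\end{align*}
since the right-hand side of the target estimate differs from $\Vert \mathcal{L}_{\tilde{A},h} w_h\Vert_{L_h^p(\Omega)}$ by a controlled lower-order contribution, and the $W^{1,p}$-norm bound on $w_h$ (via the reaction term in Lemma~\ref{lemma_gloabl_estimate_with_reaction}) will be removed by the contradiction argument below.

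Suppose the conclusion fails. Then there exist sequences $h_n \to 0$ and $w_{h_n}\in V_{h_n}$ with
\begin{align*}
\Vert w_{h_n}\Vert_{W_{h_n}^{2,p}(\Omega)} = 1, \qquad \Vert \tilde{A}:\overline{\nabla}_{h_n}(\nabla w_{h_n}) + \tilde{\boldsymbol{b}}\cdot\nabla w_{h_n} + \tilde{c}w_{h_n}\Vert_{L_{h_n}^p(\Omega)} \to 0.
\end{align*}
Apply Lemma~\ref{lemma_conv_compactness} to obtain a subsequence (not relabelled) and a limit $w\in W^{2,p}(\Omega)\cap W_0^{1,p}(\Omega)$ with $w_{h_n}\to w$ in $W^{1,p}(\Omega)$ and $\overline{\nabla}_{h_n}(\nabla w_{h_n})\rightharpoonup D^2 w$ in $[L^p(\Omega)]^{d\times d}$. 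For any $\phi\in C_0^\infty(\Omega)$, test the defect with $v_{h_n}:=P_{h_n}\phi\in V_{h_n}$; by Lemma~\ref{lemma_L2_proj_props}, $v_{h_n}\to \phi$ in $L^{p'}(\Omega)$ and $\Vert v_{h_n}\Vert_{L^{p'}(\Omega)}$ is uniformly bounded. Then the defect assumption gives
\begin{align*}
\int_\Omega \bigl(\tilde{A}:\overline{\nabla}_{h_n}(\nabla w_{h_n}) + \tilde{\boldsymbol{b}}\cdot\nabla w_{h_n} + \tilde{c}w_{h_n}\bigr) v_{h_n}\,d\boldx \to 0,
\end{align*}
and pairing the weak convergence of $\overline{\nabla}_{h_n}(\nabla w_{h_n})$ against the strong convergence of $\tilde{A}v_{h_n}$ in $L^{p'}$, together with the strong convergence $w_{h_n}\to w$ in $W^{1,p}$, yields $\int_\Omega (\tilde{A}:D^2 w + \tilde{\boldsymbol{b}}\cdot\nabla w + \tilde{c}w)\phi\,d\boldx = 0$ for every $\phi\in C_0^\infty(\Omega)$. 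Since $\gamma$ is bounded away from $0$ and $\infty$, this is equivalent to $A:D^2 w + \boldsymbol{b}\cdot\nabla w + cw = 0$ almost everywhere in $\Omega$, and Theorem~\ref{thm_global_A_continuous} forces $w=0$.

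Hence $w_{h_n}\to 0$ strongly in $W^{1,p}(\Omega)$, and consequently $\Vert \tilde{\boldsymbol{b}}\cdot\nabla w_{h_n} + \tilde{c}w_{h_n}\Vert_{L^p(\Omega)}\to 0$. Combining this with the defect assumption and the definition~(\ref{discrete_zero_order_norm}) of the discrete $L^p$-norm gives $\Vert \mathcal{L}_{\tilde{A},h_n} w_{h_n}\Vert_{L_{h_n}^p(\Omega)}\to 0$ as well. Lemma~\ref{lemma_gloabl_estimate_with_reaction} then yields $\Vert w_{h_n}\Vert_{W_{h_n}^{2,p}(\Omega)}\to 0$, contradicting the normalization $\Vert w_{h_n}\Vert_{W_{h_n}^{2,p}(\Omega)}=1$. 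The expected main obstacle is the passage to the limit in Step 2 — specifically, showing that the bilinear pairing $(\tilde{A}:\overline{\nabla}_{h_n}(\nabla w_{h_n}),v_{h_n})_\Omega$ converges to $\int \tilde{A}:D^2 w\,\phi\,d\boldx$, which is precisely what the weak compactness in Lemma~\ref{lemma_conv_compactness} combined with the $L^{p'}$-stability of $P_{h_n}$ in Lemma~\ref{lemma_L2_proj_props} is designed to deliver.
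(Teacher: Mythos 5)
Your proposal is correct and follows essentially the same route as the paper: Lemma~\ref{lemma_gloabl_estimate_with_reaction} combined with a contradiction argument that uses the discrete compactness of Lemma~\ref{lemma_conv_compactness}, the $L^{p^{\prime}}$-stability of $P_{h}$ from Lemma~\ref{lemma_L2_proj_props} to pass to the limit in the defect pairing, and Theorem~\ref{thm_global_A_continuous} to force $w=0$. The only differences are cosmetic: you normalize $\Vert w_{h_n}\Vert_{W_{h_n}^{2,p}(\Omega)}=1$ and close the contradiction with one more application of Lemma~\ref{lemma_gloabl_estimate_with_reaction}, whereas the paper normalizes $\Vert w_{h}\Vert_{W^{1,p}(\Omega)}=1$ and contradicts $\Vert w\Vert_{W^{1,p}(\Omega)}\geq \frac{1}{2}$; also, your opening ``it suffices'' display is imprecise as stated (it does not by itself yield the target, since the lower-order term is not controlled by the combined defect), but it is never used, because your contradiction argument addresses the target estimate directly.
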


\begin{proof}
We notice that by (\ref{def_gamma}), $A \in [C(\overline{\Omega})]^{d\times d}$ is equivalent 
to $\tilde{A} \in [C(\overline{\Omega})]^{d\times d}$, and $c \leq 0$ is 
equivalent to $\tilde{c} \leq 0$. 
%In addition, $\Vert \mathcal{L}_{\tilde{A},h}w_{h}\Vert_{L_{h}^{p}(\Omega)}$ 
%and $\Vert \mathcal{L}_{\tilde{A},h}w_{h}\Vert_{L^{p}(\Omega)}$ are equivalent due to \cite[($2.6$)]{FHM2017}.

For the sake of simplicity, we only give detailed proof of the case $d=3$. The proof of the case $d=2$ ($\Omega$ 
can be non-convex) can be proven in the same manner.

According to Lemma~\ref{lemma_gloabl_estimate_with_reaction}, it is sufficient to prove that for any $p \in (1, 2]$,
\begin{align*}
\Vert w_{h}\Vert_{W^{1,p}(\Omega)} \leq C \Vert \tilde{A}: \overline{\nabla}_{h}(\nabla w_{h})
+\tilde{\boldsymbol{b}}\cdot\nabla w_{h} + \tilde{c} w_{h}  \Vert_{L_{h}^{p}(\Omega)}, 
\qquad \forall w_{h} \in V_{h}.
\end{align*}
Here the constant $C$ may depend on $p$. 

We prove by contradiction. Without losing generality, we assume that there is one $p \in (1, 2]$ and 
a sequence of $\{w_{h} \in V_{h}\}_{h>0}$, such that 
\begin{align}
\label{assumption_contradition1}
\Vert w_{h}\Vert_{W^{1,p}(\Omega)} = 1, \quad \text{and } \quad  
\lim_{h\rightarrow 0}\Vert \tilde{A}: \overline{\nabla}_{h}(\nabla w_{h})
+\tilde{\boldsymbol{b}}\cdot\nabla w_{h} 
+ \tilde{c}w_{h}\Vert_{L_{h}^{p}(\Omega)} = 0.
\end{align} 
So there is a positive constant $C$ such that 
\begin{align*}
& \Vert \mathcal{L}_{\tilde{A},h}w_{h} \Vert_{L_{h}^{p}(\Omega)} 
= \Vert \tilde{A}: \overline{\nabla}_{h}(\nabla w_{h})\Vert_{L_{h}^{p}(\Omega)} \\
\leq & C \left( \Vert \tilde{A}: \overline{\nabla}_{h}(\nabla w_{h})
+\tilde{\boldsymbol{b}}\cdot\nabla w_{h} + \tilde{c}w_{h}\Vert_{L_{h}^{p}(\Omega)}
+  \Vert w_{h} \Vert_{W^{1,p}(\Omega)} \right).
\end{align*}
The equality above is due to (\ref{L_h_operator}). 
According to Lemma~\ref{lemma_gloabl_estimate_with_reaction}, there is $M_{1} < +\infty$ such that 
\begin{align*}
\Vert w_{h} \Vert_{W_{h}^{2,p}(\Omega)} \leq M_{1}, \qquad \forall h>0.
\end{align*}
Therefore, Lemma~\ref{lemma_conv_compactness} implies that 
there is $w \in W^{2} \cap W_{0}^{1,p}(\Omega)$ and $ 0<M_{2}< + \infty$ 
such that 
\begin{align}
\label{conv_w_second_order_proof}
& \Vert w_{h} - w\Vert_{W^{1,p}(\Omega)} \rightarrow 0, \\
\label{conv_w_second_order_weak_proof} 
& \overline{\nabla}_{h}(\nabla w_{h}) \rightharpoonup D^{2}w \text{ in } 
L^{p}(\Omega),
\end{align}
for a subsequence of $h \rightarrow 0$; and 
\begin{align}
\label{discrete_second_deri_bound_proof}
\quad \Vert \overline{\nabla}_{h}(\nabla w) \Vert_{L^{p}(\Omega)} 
+ \Vert \overline{\nabla}_{h} (\nabla w_{h})\Vert_{L^{p}(\Omega)} 
\leq M_{2}, \qquad \forall h> 0.
\end{align}

In the following of the proof, we don't distinguish $h \rightarrow 0$ 
and a subsequence of $h \rightarrow 0$.
Then (\ref{assumption_contradition1}) and (\ref{conv_w_second_order_proof}) imply that $\Vert w\Vert_{W^{1,p}(\Omega)} 
\geq \frac{1}{2}$. By Theorem~\ref{thm_global_A_continuous}, in order to achieve contradiction with 
$\Vert w\Vert_{W^{1,p}(\Omega)} \geq \frac{1}{2}$, we only need to prove that 
\begin{align*}
\tilde{A}: D^{2} w + \tilde{\boldsymbol{b}}\cdot \nabla w 
+ \tilde{c} w = 0 \text{ almost everywhere in } \Omega. 
\end{align*}
Since $w \in W^{2,p}(\Omega)$, 
it is sufficient to prove that 
\begin{align}
\label{statement_contradiction1}
 (\tilde{A}:D^{2}w + \tilde{\boldsymbol{b}}\cdot \nabla w + \tilde{c} w, 
 v)_{\Omega}  = 0, \qquad \forall v \in C_{0}^{\infty}(\Omega).
\end{align}

We choose $v \in C_{0}^{\infty}(\Omega)$ arbitrarily.
It is easy to see 
\begin{align*}
& (\tilde{A}:D^{2}w + \tilde{\boldsymbol{b}}\cdot \nabla w + \tilde{c} 
w, v)_{\Omega} \\ 
= & (\tilde{A}:(D^{2}w -\overline{\nabla}_{h}(\nabla w_{h})), v )_{\Omega} 
+ (\tilde{A}:\overline{\nabla}_{h}(\nabla w_{h}), v )_{\Omega} 
+ (\tilde{\boldsymbol{b}}\cdot \nabla w + \tilde{c} w, v)_{\Omega} \\
= & (\tilde{A}:(D^{2}w -\overline{\nabla}_{h}(\nabla w_{h})), v )_{\Omega} 
+ (\tilde{A}:\overline{\nabla}_{h}(\nabla w_{h}) 
+ \tilde{\boldsymbol{b}}\cdot \nabla w_{h} + \tilde{c} w_{h}, P_{h} v )_{\Omega} \\
& \qquad + (\tilde{A}:\overline{\nabla}_{h}(\nabla w_{h})
+ \tilde{\boldsymbol{b}}\cdot \nabla w_{h} + \tilde{c} w_{h}, v - P_{h}v )_{\Omega} 
+ (\tilde{\boldsymbol{b}}\cdot \nabla (w-w_{h}) + \tilde{c}(w-w_{h}), v)_{\Omega}.
\end{align*}
Here we denote by $P_{h}$ the standard $L^{2}$-orthogonal projection onto $V_{h}$.
We define $p^{\prime} \in (1, + \infty)$ by  $\frac{1}{p} + \frac{1}{p^{\prime}} = 1$. 
By (\ref{conv_w_second_order_weak_proof}) and the fact that $\tilde{A}v\in [L^{p^{\prime}}(\Omega)]^{d \times d}$, 
\begin{align*}
\lim_{h \rightarrow 0} (\tilde{A}:(D^{2}w -\overline{\nabla}_{h}(\nabla w_{h})), v )_{\Omega} = 0.
\end{align*}
By Lemma~\ref{lemma_L2_proj_props}, there is a constant $\tilde{C}_{1}$ independent of $h$, 
\begin{align}
\label{L2_projection_bound2}
\Vert P_{h} v\Vert_{L^{p^{\prime}}(\Omega)} \leq \tilde{C}_{1} \Vert v\Vert_{L^{p^{\prime}}(\Omega)}, 
\qquad \Vert P_{h} v\Vert_{W^{1,p^{\prime}}(\Omega)} \leq \tilde{C}_{1} \Vert v\Vert_{W^{1,p^{\prime}}(\Omega)}.
\end{align}
%We notice that 
%\begin{align*}
%(\tilde{A}:\overline{\nabla}_{h}(\nabla w_{h})
%+ \tilde{\boldsymbol{b}}\cdot \nabla w_{h} + \tilde{c} w_{h}, P_{h} v )_{\Omega} 
%= (-\mathcal{L}_{\tilde{A},h}w_{h}
%+ \tilde{\boldsymbol{b}}\cdot \nabla w_{h} + \tilde{c} w_{h}, P_{h}v)_{\Omega}.
%\end{align*}
Therefore, by (\ref{assumption_contradition1}) and (\ref{L2_projection_bound2}), we have 
\begin{align*}
& \lim_{h\rightarrow 0} \vert (\tilde{A}:\overline{\nabla}_{h}(\nabla w_{h}
+ \tilde{\boldsymbol{b}}\cdot\nabla w_{h}+\tilde{c}w_{h}), P_{h}v)_{\Omega}\vert\\
\leq & \lim_{h \rightarrow 0} \Vert \tilde{A}: \overline{\nabla}_{h}(\nabla w_{h}) 
+ \tilde{\boldsymbol{b}}\cdot \nabla w_{h} + \tilde{c} w_{h}\Vert_{L_{h}^{p}(\Omega)} 
\Vert P_{h} v\Vert_{L^{p^{\prime}}(\Omega)} \\
\leq & \tilde{C}_{1} \lim_{h \rightarrow 0} \Vert \tilde{A}: \overline{\nabla}_{h}(\nabla w_{h}) 
+ \tilde{\boldsymbol{b}}\cdot \nabla w_{h} + \tilde{c} w_{h}\Vert_{L_{h}^{p}(\Omega)} 
\Vert v\Vert_{L^{p^{\prime}}(\Omega)} = 0.
\end{align*}
By (\ref{assumption_contradition1},\ref{discrete_second_deri_bound_proof},\ref{L2_projection_bound2}), we have 
\begin{align*}
\lim_{h\rightarrow 0} (\tilde{A}:\overline{\nabla}_{h}(\nabla w_{h})
+\tilde{\boldsymbol{b}}\cdot\nabla w_{h}+\tilde{c}w_{h},v-P_{h}v)_{\Omega} = 0.
\end{align*}
By (\ref{conv_w_second_order_proof}), we have 
\begin{align*}
\lim_{h\rightarrow 0} (\tilde{\boldsymbol{b}}\cdot \nabla (w-w_{h}) + \tilde{c}(w-w_{h}), v)_{\Omega} = 0.
\end{align*}
Therefore, we have 
\begin{align}
\label{verification_contradiction1}
(\tilde{A}:D^{2}w + \tilde{\boldsymbol{b}}\cdot \nabla w 
+ \tilde{c}w, v)_{\Omega} = 0, \qquad \forall v \in C_{0}^{\infty}(\Omega).
\end{align}

(\ref{verification_contradiction1}) implies that the proof is complete.
\end{proof}

\subsection{Well-posedness of linear elliptic PDE in non-divergence form with uniformly continuous $A$ and optimal 
convergence of the numerical solution of FEM}

\begin{theorem}
\label{thm_A_continuous_complete}
Let $A \in [C^{0}(\overline{\Omega})]^{d\times d}$ uniformly elliptic, 
$\boldsymbol{b}\in [L^{\infty}(\Omega)]^{d}$, $c \in L^{\infty}(\Omega)$ with $c \leq 0$, 
 and $\Omega$ is a bounded open Lipschitz polyhedral domain in $\mathbb{R}^{d}$ ($d=2,3$).  
For any $f \in L^{p}(\Omega)$, 
there is a unique $u \in W^{2,p}(\Omega) \cap W_{0}^{1,p}(\Omega)$ such that 
\begin{subequations}
\label{A_continuous_wellposedness_props}
\begin{align}
\label{A_continuous_wellposedness_prop1}
& A:D^{2} u + \boldsymbol{b}\cdot \nabla u + cu = f \text{ in } \Omega, \\
\label{A_continuous_wellposedness_prop2}
& \Vert u\Vert_{W^{2,p}(\Omega)} \leq C_{p} \Vert f \Vert_{L^{p}(\Omega)}.
\end{align}
\end{subequations}
Here the constant $C_{p}$ is the same as the one in Theorem~\ref{thm_global_A_continuous}. 
$p$ is valid in the range described in (\ref{p_range1}).
%If $\Omega$ is convex polyhedra in $\mathbb{R}^{d}$ ($d=2,3$), the estimates (\ref{A_continuous_wellposedness_props}) hold 
%for all $1< p \leq 2$. If $d=2$, the estimates (\ref{A_continuous_wellposedness_props}) hold for all $1<p < \frac{4}{3} 
%+ \epsilon_{2}$ where $\epsilon_{2}$ is the same as 
%that in Lemma~\ref{lemma_Poisson_regularity}.

Let $u_{h} \in V_{h}$ be the numerical solution of the finite element method 
(\ref{nondiv_fem}). Then we have 
\begin{align}
\label{A_continuous_conv}
\Vert u_{h} -  u \Vert_{W^{1,p}(\Omega)} \rightarrow 0, \qquad 
\overline{\nabla}_{h}(\nabla u_{h}) \rightharpoonup D^{2}u \text{ in } 
[L^{p}(\Omega)]^{d \times d},
\end{align}
as $h \rightarrow 0$. 
And if $h \in (0,h_{2})$ where $h_{2}$ is introduced in Theorem~\ref{thm_global_estimate}, then for any 
$\chi_{h} \in V_{h}$,
\begin{align}
\label{A_continuous_conv_rate}
& \Vert u_{h} - u \Vert_{W^{1,p}(\Omega)}
+ \Vert u_{h} - u \Vert_{W_{h}^{2,p}(\Omega)} \\
\nonumber
\leq C & \big( \Vert u  - \chi_{h} \Vert_{W^{1,p}(\Omega)} + \Vert u - \chi_{h}\Vert_{W_{h}^{2,p}(\Omega)} 
+ \Vert \overline{P}_{h}(D^{2}u) - D^{2}u \Vert_{L^{p}(\Omega)} \big).
\end{align}
Here $p$ is valid in the range described in (\ref{p_range2}). 
$\overline{P}_{h}$ denotes the standard $L^{2}$-orthogonal projection onto $[\overline{V}_{h}]^{d\times d}$.
%If $\Omega$ is convex polyhedra in $\mathbb{R}^{d}$ ($d=2,3$), the estimates (\ref{A_continuous_conv}) and 
%(\ref{A_continuous_conv_rate}) hold for all $1< p \leq 2$. If $d=2$, the estimate (\ref{A_continuous_conv}) hold for 
%all $\frac{4}{3} - \epsilon_{1} <p < \frac{4}{3} 
%+ \epsilon_{2}$ where $\epsilon_{1},\epsilon_{2}$ are the same as 
%those in Lemma~\ref{lemma_Poisson_regularity}.
\end{theorem}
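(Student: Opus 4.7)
The plan is to establish the well-posedness (\ref{A_continuous_wellposedness_props}) and the FEM convergence (\ref{A_continuous_conv}) first for $p$ in the numerical range (\ref{p_range2}), then extend to the wider PDE range (\ref{p_range1}) by density, and finally derive the quasi-optimal error estimate (\ref{A_continuous_conv_rate}) through a C\'ea-type argument. The starting observation is that for $h\in(0,h_2)$ the FEM (\ref{nondiv_fem}) is a square linear system on the finite-dimensional space $V_h$, and the stability estimate (\ref{global_estimate}) in Theorem~\ref{thm_global_estimate} forces injectivity. Hence $u_h\in V_h$ exists uniquely, with the uniform bound $\|u_h\|_{W_h^{2,p}(\Omega)}\le C\|f\|_{L^p(\Omega)}$ since $\tilde f=\gamma f$ with $\gamma\in L^\infty(\Omega)$.

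For $p$ in the numerical range, I would apply the discrete compactness Lemma~\ref{lemma_conv_compactness} to the bounded family $\{u_h\}$ to extract a subsequence with $u_h\to u$ in $W^{1,p}(\Omega)$ and $\overline{\nabla}_h(\nabla u_h)\rightharpoonup D^2 u$ in $[L^p(\Omega)]^{d\times d}$, for some $u\in W^{2,p}(\Omega)\cap W_0^{1,p}(\Omega)$. Passing to the limit in the FEM, tested against $P_h v$ for arbitrary $v\in C_0^\infty(\Omega)$, and using the $L^p$ and $W^{1,p}$ stability of $P_h$ from Lemma~\ref{lemma_L2_proj_props}, identifies $u$ as a strong solution of (\ref{nondiv_pde_original}); this mirrors the limit passage at the end of the proof of Theorem~\ref{thm_global_estimate}. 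Uniqueness in $W^{2,p}(\Omega)\cap W_0^{1,p}(\Omega)$ together with the a priori bound (\ref{A_continuous_wellposedness_prop2}) are immediate from Theorem~\ref{thm_global_A_continuous}, and uniqueness upgrades the subsequence convergence to convergence of the whole family, producing (\ref{A_continuous_conv}).

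To cover the additional range $p\in(1,\tfrac{4}{3}-\epsilon_1]$ arising only on non-convex polygons in $\mathbb{R}^2$, I would argue by density: pick $f_n\in L^q(\Omega)$ with $q$ in the numerical range such that $f_n\to f$ in $L^p(\Omega)$, obtain $u_n\in W^{2,q}\cap W_0^{1,q}$ from the previous step, and then apply Theorem~\ref{thm_global_A_continuous} in the $p$-norm to obtain $\|u_n-u_m\|_{W^{2,p}(\Omega)}\le C_p\|f_n-f_m\|_{L^p(\Omega)}$. Hence $\{u_n\}$ is Cauchy in $W^{2,p}(\Omega)\cap W_0^{1,p}(\Omega)$, and its limit $u$ solves (\ref{nondiv_pde_original}) by continuity of the differential operator; (\ref{A_continuous_wellposedness_prop2}) and uniqueness follow once more from Theorem~\ref{thm_global_A_continuous}.

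For the rate (\ref{A_continuous_conv_rate}), I would apply (\ref{global_estimate}) to $u_h-\chi_h\in V_h$ and compute the residual by subtracting the FEM identity from the PDE tested against $v_h\in V_h$. The key algebraic identity is that, for $u\in W^{2,p}(\Omega)$, $\overline{\nabla}_h(\nabla u)=\overline{P}_h(D^2 u)$ in $[\overline{V}_h]^{d\times d}$, which follows from (\ref{discrete_deri_equivalent}) and the vanishing of $[\nabla u]$ across interior faces. This decomposes the residual into a projection part $\tilde A:(\overline{P}_h(D^2 u)-D^2 u)$ together with a term of the form $\tilde A:\overline{\nabla}_h(\nabla(u-\chi_h))$ and lower-order pieces in $\tilde b,\tilde c$, which are controlled by $\|u-\chi_h\|_{W_h^{2,p}(\Omega)}$ via duality against $V_h$; a triangle inequality then yields (\ref{A_continuous_conv_rate}). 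The main obstacle lies in the second paragraph: making the limit passage work outside the Hilbert setting $p=2$ requires simultaneous use of the weak-$L^p$ convergence of the discrete Hessians from Lemma~\ref{lemma_conv_compactness} and the $L^p$-continuity of $P_h$ from Lemma~\ref{lemma_L2_proj_props}, neither of which is automatic for variable coefficients $\tilde A$.
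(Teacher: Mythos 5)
Your proposal is correct and follows essentially the same route as the paper: unique solvability and the uniform bound for $u_h$ from Theorem~\ref{thm_global_estimate}, discrete compactness (Lemma~\ref{lemma_conv_compactness}) plus the limit passage with $P_h v$ and Lemma~\ref{lemma_L2_proj_props} to identify the strong solution, uniqueness via Theorem~\ref{thm_global_A_continuous} to upgrade to the full family, and a density argument for $p\in(1,\tfrac43-\epsilon_1]$ on non-convex polygons. The only deviations are cosmetic: you run the density step as a Cauchy argument in $W^{2,p}$ rather than the paper's weak-compactness extraction (both rest on Theorem~\ref{thm_global_A_continuous}), and you spell out the error bound (\ref{A_continuous_conv_rate}) — which the paper leaves as ``straightforward'' — via the correct identity $\overline{\nabla}_h(\nabla u)=\overline{P}_h(D^2u)$ that the paper itself uses in the proof of Theorem~\ref{thm_hjb_conv}.
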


\begin{proof}
We divide the proof into two steps. In step $1$,
We will first prove the estimates (\ref{A_continuous_wellposedness_props}) 
and (\ref{A_continuous_conv},\ref{A_continuous_conv_rate}) for 
 $1< p \leq 2$ if $\Omega$ is convex polyhedra in $\mathbb{R}^{d}$ ($d=2,3$), and 
 $ \frac{4}{3} - \epsilon_{1} <p < \frac{4}{3} 
+ \epsilon_{2}$ if $\Omega$ is a polygon (maybe non-convex) in $\mathbb{R}^{2}$. 
In step $2$, we will prove the estimate (\ref{A_continuous_wellposedness_props}) 
for $1< p \leq \frac{4}{3} - \epsilon_{1}$ and $\Omega$ is a 
polygon (maybe non-convex) in $\mathbb{R}^{2}$. 

Step $1$. We consider $1< p \leq 2$ if $\Omega$ is convex polyhedra in $\mathbb{R}^{d}$ ($d=2,3$), and $ \frac{4}{3} - \epsilon_{1} <p < \frac{4}{3} 
+ \epsilon_{2}$ if $\Omega$ is a polygon (maybe non-convex) in $\mathbb{R}^{2}$.

By the design of the finite element method (\ref{nondiv_fem}) and 
Theorem~\ref{thm_global_estimate}, there is a unique $u_{h} \in V_{h}$ 
to be the numerical solution of (\ref{nondiv_fem}) and 
\begin{align*}
\Vert u_{h} \Vert_{W_{h}^{2,p}(\Omega)} \leq C \Vert f\Vert_{L^{p}(\Omega)},
\end{align*}
if $h$ is small enough. Then by Lemma~\ref{lemma_conv_compactness}, there is 
$u \in W^{2,p}(\Omega) \cap W_{0}^{1,p}(\Omega)$ such that 
\begin{align*}
\Vert u_{h} - u\Vert_{W^{1,p}(\Omega)} \rightarrow 0, \qquad 
\overline{\nabla}_{h}(\nabla u_{h}) \rightharpoonup D^{2}u 
\text{ in } [L^{p}(\Omega)]^{d \times d},
\end{align*}
for a sequence of $h \rightarrow 0$; and there is $0 < M_{2} < +\infty$ such that 
$\Vert \overline{\nabla}(\nabla u_{h})\Vert_{L^{p}(\Omega)} \leq M_{2}$ for any 
$h > 0$.

Now we need to prove (\ref{A_continuous_wellposedness_prop1}). 
We define $p^{\prime} \in (1, + \infty)$ by  $\frac{1}{p} + \frac{1}{p^{\prime}} = 1$. 
We denote by $P_{h}$ the standard $L^{2}$-orthogonal projection onto $V_{h}$.
It is well-known that there is a constant $\tilde{C}_{1}$ independent of $h$, 
\begin{align}
\label{L2_projection_bound3}
\Vert P_{h} v\Vert_{L^{p^{\prime}}(\Omega)} \leq \tilde{C}_{1} \Vert v\Vert_{L^{p^{\prime}}(\Omega)}, 
\qquad \Vert P_{h} v\Vert_{W^{1,p^{\prime}}(\Omega)} \leq \tilde{C}_{1} \Vert v\Vert_{W^{1,p^{\prime}}(\Omega)}.
\end{align}

We choose $v \in C_{0}^{\infty}(\Omega)$ arbitrarily. 
By the design of the finite element method (\ref{nondiv_fem}), We have 
\begin{align*}
& (\tilde{A}:D^{2}u + \tilde{\boldsymbol{b}}\cdot\nabla u 
+ \tilde{c}u, v)_{\Omega} \\
= & (\tilde{A}: (D^{2}u - \overline{\nabla}(\nabla u_{h})), v)_{\Omega} 
+ (\tilde{A}:\overline{\nabla}(\nabla u_{h}) 
+ \tilde{\boldsymbol{b}}\cdot\nabla u + \tilde{c}u, v)_{\Omega} \\
= & (\tilde{A}: (D^{2}u - \overline{\nabla}(\nabla u_{h})), v)_{\Omega} 
+ (\tilde{A}:\overline{\nabla}(\nabla u_{h}) 
+ \tilde{\boldsymbol{b}}\cdot\nabla u + \tilde{c}u, v - P_{h}v)_{\Omega} \\
& \qquad + (\tilde{A}:\overline{\nabla}(\nabla u_{h}) 
+ \tilde{\boldsymbol{b}}\cdot\nabla u + \tilde{c}u, P_{h}v)_{\Omega} \\
= & (\tilde{A}: (D^{2}u - \overline{\nabla}(\nabla u_{h})), v)_{\Omega} 
+ (\tilde{A}:\overline{\nabla}(\nabla u_{h}) 
+ \tilde{\boldsymbol{b}}\cdot\nabla u + \tilde{c}u, v - P_{h}v)_{\Omega} \\
& \qquad + (\tilde{A}:\overline{\nabla}(\nabla u_{h}) 
+ \tilde{\boldsymbol{b}}\cdot\nabla u_{h} + \tilde{c}u_{h}, P_{h}v)_{\Omega} 
+ (\tilde{\boldsymbol{b}}\cdot \nabla(u - u_{h}) 
+ \tilde{c}(u - u_{h}), P_{h}v)_{\Omega} \\
= & (\tilde{A}: (D^{2}u - \overline{\nabla}(\nabla u_{h})), v)_{\Omega} 
+ (\tilde{A}:\overline{\nabla}(\nabla u_{h}) 
+ \tilde{\boldsymbol{b}}\cdot\nabla u + \tilde{c}u, v - P_{h}v)_{\Omega} \\
& \qquad + (\tilde{f}, v)_{\Omega} + (\tilde{f}, P_{h}v - v)_{\Omega} 
+ (\tilde{\boldsymbol{b}}\cdot \nabla(u - u_{h}) 
+ \tilde{c}(u - u_{h}), P_{h}v)_{\Omega}.
\end{align*}

Since $\overline{\nabla}_{h}(\nabla u_{h}) \rightharpoonup D^{2}u 
\text{ in } [L^{p}(\Omega)]^{d \times d}$, we have 
\begin{align*}
(\tilde{A}:(D^{2}u - \overline{\nabla}_{h}(\nabla u_{h})), v)_{\Omega} 
+ (\tilde{\boldsymbol{b}}\cdot \nabla(u - u_{h}) 
+ \tilde{c}(u - u_{h}), P_{h}v)_{\Omega}\rightarrow 0,
\end{align*}
for a sequence of $h \rightarrow 0$.

By (\ref{L2_projection_bound3}) and the fact that $\Vert \overline{\nabla}(\nabla u_{h})\Vert_{L^{p}(\Omega)} 
\leq M_{2}$, we have 
\begin{align*}
& \vert (\tilde{A}:\overline{\nabla}(\nabla u_{h})
+ \tilde{\boldsymbol{b}}\cdot \nabla u + \tilde{c}u, v - P_{h}v)_{\Omega} \vert \\
\leq & C \left(\Vert \overline{\nabla}(\nabla u_{h})\Vert_{L^{p}(\Omega)} 
+ \Vert u\Vert_{W^{1,p}(\Omega)}\right)\Vert v - P_{h} 
v\Vert_{L^{p^{\prime}}(\Omega)} \rightarrow 0,
\end{align*}
as $h \rightarrow 0$. 

Similarly, we have 
\begin{align*}
\vert (\tilde{f}, P_{h}v - v)_{\Omega} \vert \rightarrow 0,
\end{align*}
as $h \rightarrow 0$.

So we have 
\begin{align*}
(\tilde{A}:D^{2}u + \tilde{\boldsymbol{b}}\cdot\nabla u 
+ \tilde{c}u, v)_{\Omega} = (\tilde{f}, v)_{\Omega}. 
\end{align*}

Since $v \in C_{0}^{\infty}(\Omega)$ is chosen arbitrarily, 
$\tilde{A}:D^{2}+\tilde{\boldsymbol{b}}\cdot\nabla u 
+ \tilde{c}u = \tilde{f}$ in $\Omega$ almost everywhere. 
By the definition of the function $\gamma$ (see (\ref{def_gamma})) and 
the fact that $A$ is uniformly elliptic and uniformly bounded in $\Omega$,  
$\gamma$ is uniformly bounded from below and above by positive constants. 
So we have $A:D^{2}u +\boldsymbol{b}\cdot\nabla u + cu = f$ in $\Omega$ 
almost everywhere. 
So (\ref{A_continuous_wellposedness_prop1}) 
is proven. Theorem~\ref{thm_global_A_continuous} implies the uniqueness of 
$u \in W^{2,p}(\Omega) \cap W_{0}^{1,p}(\Omega)$, which results in 
\begin{align*}
\Vert u_{h} - u\Vert_{W^{1,p}(\Omega)} \rightarrow 0, \qquad 
\overline{\nabla}_{h}(\nabla u_{h}) \rightharpoonup D^{2}u 
\text{ in } [L^{p}(\Omega)]^{d \times d},
\end{align*}
as $h \rightarrow 0$. 

The proof of (\ref{A_continuous_conv_rate}) is straightforward due to 
Theorem~\ref{thm_global_estimate} and the construction of the finite element method (\ref{nondiv_fem}).
Thus the proof in Step $1$ is complete.

Step $2$. We consider $1< p \leq \frac{4}{3} - \epsilon_{1}$ if $\Omega$ is a 
polygon (maybe non-convex) in $\mathbb{R}^{2}$. 

Let $\{{f_{i}}\}_{i \geq 1} \subset L^{\frac{4}{3}}(\Omega)$ such that 
$\Vert f_{i} -f \Vert_{L^{p}(\Omega)} \rightarrow 0$ as $i \rightarrow +\infty$. 
For any $i \geq 1$, we denote by $u_{i} \in W^{2,\frac{4}{3}}(\Omega) \cap 
W_{0}^{1, \frac{4}{3}}(\Omega)$ satisfying 
\begin{align*}
A:D^{2} u_{i} + \boldsymbol{b}\cdot \nabla u_{i} + c u_{i} = f_{i} 
\text{ in } \Omega \text{ almost everywhere}.
\end{align*}

Since $\Vert f_{i} -f \Vert_{L^{p}(\Omega)} \rightarrow 0$ as $i \rightarrow +\infty$, 
$\{ \Vert f_{i} \Vert_{L^{p}(\Omega)} \}_{i \geq 1}$ have a uniform upper bound. 
So $\{ \Vert u_{i}\Vert_{W^{2,p}(\Omega)} \}_{i \geq 1}$ have a uniform upper bound 
as well. So there is $u \in W^{2,p}(\Omega) \cap W_{0}^{1,p}(\Omega)$ such that 
\begin{align*}
u_{i} \rightharpoonup u \text{ in } W^{2,p}(\Omega),
\end{align*}
for a subsequence of $i \rightarrow + \infty$.

We choose $v \in C_{0}^{\infty}(\Omega)$ arbitrarily. Then 
\begin{align*}
& (A:D^{2} u + \boldsymbol{b}\cdot \nabla u + cu, v)_{\Omega} \\
= & (A:D^{2}(u - u_{i}) + \boldsymbol{b}\cdot \nabla (u - u_{i}) 
+ c(u - u_{i}), v)_{\Omega} 
+ (A:D^{2}u_{i} + \boldsymbol{b}\cdot\nabla u_{i} + c u_{i}, v)_{\Omega} \\
= & (A:D^{2}(u - u_{i}) + \boldsymbol{b}\cdot \nabla (u - u_{i}) 
+ c(u - u_{i}), v)_{\Omega} + (f_{i} - f, v)_{\Omega}
+ (f, v)_{\Omega}. 
\end{align*}

Since $u_{i} \rightharpoonup u \text{ in } W^{2,p}(\Omega)$ for 
a subsequence of $i \rightarrow + \infty$ and 
$\Vert f_{i} - f \Vert_{L^{p}(\Omega)} \rightarrow 0$ as 
$i \rightarrow + \infty$, we have 
\begin{align*}
(A:D^{2} u + \boldsymbol{b}\cdot \nabla u 
+ c u, v)_{\Omega} = (f, v)_{\Omega}.
\end{align*}

Since $v \in C_{0}^{\infty}(\Omega)$ is chosen arbitrarily, 
$A:D^{2}u = f$ in $\Omega$ almost everywhere. 
By Theorem~\ref{thm_global_A_continuous}, 
$u \in W^{2,p}(\Omega)\cap W_{0}^{1,p}(\Omega)$ is the unique solution of 
(\ref{nondiv_pde_original}). Thus the proof of Step $2$ is complete. 
\end{proof}

\subsection{Linear elliptic PDE in non-divergence form with discontinuous $A$ but $\gamma A$ dominated by $I_{d}$}

We skip the proof of Theorem~\ref{thm_pde_cordes} and Theorem~\ref{thm_global_estimate_cordes}, since they are 
special examples of Theorem~\ref{thm_hjb_pde_wellposedness} and Theorem~\ref{thm_hjb_conv}  
with the index set $\Lambda$ containing a single element. 

\begin{theorem}
\label{thm_pde_cordes}
Let $A \in [L^{\infty}(\Omega)]^{d\times d}$ uniformly elliptic, $\boldsymbol{b}\in [L^{\infty}(\Omega)]^{d}$, 
$c \in L^{\infty}(\Omega)$ with $c \leq 0$, and $\Omega$ be a Lipschitz polyhedra in $\mathbb{R}^{d}$ ($d=2,3$).  
There is a constant $0\leq \underline{\kappa}<1$ 
which may depend on $p$, such that if 
\begin{align}
\label{Cordes_coefficients_strong_nondiv_pde_general}
& \dfrac{ \vert A(\boldx)\vert^{2} 
+ (2\lambda)^{-1}\vert \boldsymbol{b}(\boldx)\vert^{2} 
+ \lambda^{-2}\vert c(\boldx)\vert^{2}}{\left( 
\text{Tr}A(\boldx) + \lambda^{-1}
\vert c(\boldx)\vert\right)^{2}} \\
\nonumber 
\leq & \dfrac{1}{d+ \epsilon}, \qquad
\forall \boldx \in \Omega \text{ almost everywhere},
\end{align}
for some $\epsilon \in [\underline{\kappa},1)$ and $\lambda > 0$,
then there is a unique solution $u \in W^{2,p}(\Omega)
\cap W_{0}^{1,p}(\Omega)$ satisfying (\ref{nondiv_pde_original}) 
for any $f \in L^{p}(\Omega)$. 
Furthermore, there is a positive constant $C$ which may depend on $p$, such that
\begin{align*}
\Vert u\Vert_{W^{2,p}(\Omega)} \leq C \Vert f\Vert_{L^{p}(\Omega)}.
\end{align*}
Here $p$ is valid in the range described in (\ref{p_range1}). 
In addition, $\underline{\kappa} = 0$ if $p=2$ and 
the domain is a convex polyhedra in $\mathbb{R}^{d}$ ($d=2,3$).
\end{theorem}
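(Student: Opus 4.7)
The plan is to treat $\gamma^\lambda L$ as a small perturbation of $\Delta - \lambda$ in the Cord\`es/Smears-S\"uli spirit, and invert by a Neumann-series argument anchored in Lemma~\ref{lemma_Poisson_regularity}. Throughout, write $Lu := A{:}D^2 u + \boldsymbol{b}\cdot\nabla u + cu$ and
\[
\gamma^\lambda(\boldx) := \frac{\mathrm{Tr}\,A(\boldx) + \lambda^{-1}|c(\boldx)|}{|A(\boldx)|^2 + (2\lambda)^{-1}|\boldsymbol{b}(\boldx)|^2 + \lambda^{-2}c(\boldx)^2}.
\]

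First I would establish the pointwise Cord\`es inequality
\[
\bigl|\gamma^\lambda(A{:}M + \boldsymbol{b}\cdot v + cw) - (\mathrm{tr}(M) - \lambda w)\bigr|^2 \leq (1-\epsilon)\bigl(|M|^2 + 2\lambda|v|^2 + \lambda^2 w^2\bigr)
\]
for all $M \in \mathbb{R}^{d\times d}$, $v \in \mathbb{R}^d$, $w \in \mathbb{R}$. By Cauchy-Schwarz on $\mathbb{R}^{d\times d}\times\mathbb{R}^d\times\mathbb{R}$ with weights $(1,(2\lambda)^{-1},\lambda^{-2})$, it suffices to bound $|\gamma^\lambda A - I_d|^2 + (2\lambda)^{-1}|\gamma^\lambda \boldsymbol{b}|^2 + \lambda^{-2}(\gamma^\lambda c + \lambda)^2$. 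Expanding, using $c \leq 0$ so that $-c = |c|$, and collecting via the definition of $\gamma^\lambda$ collapses this to $-\gamma^\lambda(\mathrm{Tr}\,A + \lambda^{-1}|c|) + (d+1)$, which is at most $1-\epsilon$ exactly by the hypothesis (\ref{Cordes_coefficients_strong_nondiv_pde_general}).

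For $p=2$ on a convex polyhedra, integrating the pointwise inequality, combining the Miranda-Talenti bound (\ref{Poisson_regularity_ineq3}) with the integration-by-parts identity $\|\Delta u\|_{L^2}^2 + 2\lambda\|\nabla u\|_{L^2}^2 + \lambda^2\|u\|_{L^2}^2 = \|\Delta u - \lambda u\|_{L^2}^2$ on $W^{2,2}(\Omega)\cap W_0^{1,2}(\Omega)$ yields $\|\gamma^\lambda L u - (\Delta u - \lambda u)\|_{L^2} \leq \sqrt{1-\epsilon}\,\|\Delta u - \lambda u\|_{L^2}$. Since Lemma~\ref{lemma_Poisson_regularity} makes $\Delta - \lambda \colon W^{2,2}(\Omega)\cap W_0^{1,2}(\Omega) \to L^2(\Omega)$ an isomorphism, rewriting $Lu = f$ as $u = (\Delta - \lambda)^{-1}\bigl[\gamma^\lambda f - (\gamma^\lambda L u - (\Delta u - \lambda u))\bigr]$ produces a $\sqrt{1-\epsilon}$-contraction; Banach fixed point gives existence, uniqueness and the a priori bound, with $\underline{\kappa}=0$. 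For general $p$ in (\ref{p_range1}) the same plan applies, but with (\ref{Poisson_regularity_ineq3}) replaced by (\ref{Poisson_regularity_ineq1}): integrating the pointwise inequality in $L^p$, then applying $\|D^2 u\|_{L^p} \leq \underline{C}_p\|\Delta u\|_{L^p}$ together with classical $L^p$ resolvent estimates for $-\Delta + \lambda$ (obtained from (\ref{Poisson_regularity_ineq2}) by interpolation, choosing $\lambda$ large to absorb the lower-order contributions) upgrades the bound to $\|\gamma^\lambda Lu - (\Delta u - \lambda u)\|_{L^p} \leq \sqrt{1-\epsilon}\,\tilde{C}_p\|\Delta u - \lambda u\|_{L^p}$ with $\tilde{C}_p$ depending only on $p$ and $\Omega$. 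Setting $\underline{\kappa} := 1 - \tilde{C}_p^{-2}$ makes the hypothesis $\epsilon \geq \underline{\kappa}$ exactly what is needed to keep the Neumann series convergent, and the same fixed-point argument closes out existence, uniqueness, and the $W^{2,p}$ bound.

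The main obstacle is the general-$p$ step. In $L^2$ the three quantities $|D^2 u|^2$, $2\lambda|\nabla u|^2$, $\lambda^2 u^2$ combine exactly into $\|\Delta u - \lambda u\|_{L^2}^2$ after one integration by parts, leaving the contraction ratio at the optimal $\sqrt{1-\epsilon}$. For $p\neq 2$ this orthogonality is unavailable, and one must pay the Calder\'on-Zygmund constant $\underline{C}_p$ of Lemma~\ref{lemma_Poisson_regularity}, which is precisely what forces $\underline{\kappa}>0$. Controlling the lower-order $\|\nabla u\|_{L^p}$ and $\|u\|_{L^p}$ terms in a manner consistent with the Cord\`es contraction is the most delicate technical point, and becomes sharpest near the exponent $4/3$ on non-convex polygons, where the underlying $W^{2,p}$-regularity of the Laplacian itself only just holds.
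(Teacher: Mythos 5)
Your proposal is correct and follows essentially the same route as the paper, which proves this result as the single-index special case of Theorem~\ref{thm_hjb_pde_wellposedness}: the Smears--S\"uli pointwise Cord\`es estimate for the $\gamma^{\lambda}$-weighted operator, a Banach fixed-point argument built on $(\Delta-\lambda)^{-1}$ with the $W^{2,p}$ regularity of Lemma~\ref{lemma_Poisson_regularity}, $\underline{\kappa}$ chosen to absorb the non-unit constants when $p\neq 2$, and Miranda--Talenti giving $\underline{\kappa}=0$ for $p=2$ on convex domains. The only soft spot is your phrase ``choosing $\lambda$ large'' for the resolvent bound --- $\lambda$ is fixed by the hypothesis and cannot be enlarged --- but the needed fact (boundedness of $(\Delta-\lambda)^{-1}\colon L^{p}\to W^{2,p}\cap W_{0}^{1,p}$ for the given $\lambda$) is exactly what the paper obtains from Theorem~\ref{thm_A_continuous_complete}, so this is a presentational fix rather than a gap.
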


\begin{theorem}
\label{thm_global_estimate_cordes}
Let $A \in [L^{\infty}(\Omega)]^{d\times d}$ uniformly elliptic, $\boldsymbol{b}\in [L^{\infty}(\Omega)]^{d}$, 
$c \in L^{\infty}(\Omega)$ with $c \leq 0$, and 
$\Omega$ be a Lipschitz polyhedra in $\mathbb{R}^{d}$ ($d=2,3$).  
There is a constant $0 \leq \kappa<1$ 
which my depend on $p$, such that if 
\begin{align}
\label{Cordes_coefficients_strong_nondiv_fem_general}
& \dfrac{\vert A(\boldx)\vert^{2}(\text{Tr} A
(\boldx))^{-2}}{1 + 2\lambda^{-1}\vert c(\boldx)\vert
(\text{Tr}A(\boldx))^{-1}
- \vert A(\boldx)\vert^{-2}
\big( \lambda^{-2} \vert c(\boldx)\vert^{2} 
+ (2\lambda)^{-1}\vert \boldsymbol{b}(\boldx)\vert^{2}\big)} \\
\nonumber 
\leq & \dfrac{1}{d + \epsilon}, \quad
\forall \boldx \in \Omega \text{ almost everywhere},
\end{align}
for some constants $\epsilon \in [\kappa,1)$ and $\lambda > 0$, then 
there is a unique numerical 
solution $u_{h}\in V_{h}$ of the finite element method (\ref{nondiv_fem}) for any 
$0< h < h_{3}$.
In addition, there is a positive constant $C$ which may depend on $p$, such that 
\begin{align*}
\Vert u_{h}\Vert_{W^{1,p}(\Omega)}+\Vert u_{h}\Vert_{W^{2,p}_{h}(\Omega)} \leq C \Vert f \Vert_{L^{p}(\Omega)}.
\end{align*}
Furthermore, if there are 
$\epsilon \in [0,1)$ and $\lambda>0$ such that (\ref{Cordes_coefficients_strong_nondiv_fem_general}) holds, then 
(\ref{Cordes_coefficients_strong_nondiv_pde_general}) in Theorem~\ref{thm_pde_cordes} holds as well for the same 
$\epsilon$ and $\lambda$. We also have the convergent result that 
for any $\chi_{h}\in V_{h}$, 
\begin{align}
\label{nondiv_fem_con_ineq}
& \Vert u_{h} - u\Vert_{W^{1,p}(\Omega)}
+ \Vert u_{h} - u\Vert_{W_{h}^{2,p}(\Omega)} \\
\nonumber
\leq & C \big(\Vert u - \chi_{h}\Vert_{W^{1,p}(\Omega)}
+\Vert u - \chi_{h}\Vert_{W_{h}^{2,p}(\Omega)} 
+ \Vert \overline{P}_{h}(D^{2}u) - D^{2}u\Vert_{L^{p}(\Omega)} \big),
\end{align}
if $0 < h < h_{3}$. Here $\overline{P}_{h}$ is the standard $L^{2}$-orthogonal projection 
onto $[\overline{V}_{h}]^{d\times d}$. 
$p$ is valid in the range described in (\ref{p_range2}).
\end{theorem}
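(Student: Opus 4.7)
The plan is to derive Theorem~\ref{thm_global_estimate_cordes} as the specialization of Theorem~\ref{thm_hjb_conv} obtained when the index set $\Lambda$ consists of a single element $\alpha_{0}$. In this degenerate case, the finite element method (\ref{hjb_fem}) collapses exactly to (\ref{nondiv_fem}), the weight $\gamma^{\alpha}$ of (\ref{def_gamma_alpha}) coincides with $\gamma$ of (\ref{def_gamma}), and the bound (\ref{Cordes_coefficients_strong_nondiv_fem_general}) becomes precisely (\ref{Cordes_coefficients_strong_hjb_fem_general}) restricted to $(A^{\alpha_{0}},\boldsymbol{b}^{\alpha_{0}},c^{\alpha_{0}},f^{\alpha_{0}})$; hence existence, uniqueness, the stability estimate, and the error bound (\ref{nondiv_fem_con_ineq}) all transfer verbatim from the corresponding statements in Theorem~\ref{thm_hjb_conv}. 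The remaining implication (\ref{Cordes_coefficients_strong_nondiv_fem_general}) $\Rightarrow$ (\ref{Cordes_coefficients_strong_nondiv_pde_general}) is a purely algebraic manipulation: substitute $\gamma = \mathrm{Tr}\,A / |A|^{2}$ into the denominator of the FEM Cordès bound and invoke the pointwise inequality $(\mathrm{Tr}\,A)^{2}\ge |A|^{2}$, valid for positive-definite $A$, to reshape the FEM bound into the PDE bound with the same $\lambda$ and $\epsilon$.

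If I were to argue directly rather than by appeal to Theorem~\ref{thm_hjb_conv}, the skeleton would be as follows. First, introduce the discrete Laplacian $L_{I,h}: W_{h}^{1}\to V_{h}$ via $(L_{I,h}w,v_{h})_{\Omega} = (I:\overline{\nabla}_{h}(\nabla w),v_{h})_{\Omega}$ and establish a discrete Miranda--Talenti estimate
\begin{equation*}
\|w_{h}\|_{W_{h}^{2,p}(\Omega)}\le C_{\mathrm{MT}}\,\|L_{I,h}w_{h}+\lambda w_{h}\|_{L_{h}^{p}(\Omega)},\qquad \forall w_{h}\in V_{h},
\end{equation*}
for $p$ in the range (\ref{p_range2}). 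For $p=2$ on convex polyhedra this is immediate from the sharp bound (\ref{Poisson_regularity_ineq3}) together with the localization developed in Lemma~\ref{lemma_gloabl_w2p_estiamte_constant_A} and Lemma~\ref{lemma_local_estimate_constant_A} specialized to $\tilde{A}_{0}=I$; for $p\neq 2$ and for non-convex polygons it proceeds by the same duality plus cut-off argument as in the proof of Lemma~\ref{lemma_gloabl_estimate_with_reaction}, applied with $\tilde{A}\equiv I$.

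Next, I would extract from (\ref{Cordes_coefficients_strong_nondiv_fem_general}) the pointwise Cauchy--Schwarz-type inequality
\begin{equation*}
\bigl|\gamma(A\!:\!M+\boldsymbol{b}\!\cdot\! v+cs)-(\mathrm{tr}\,M+\lambda s)\bigr|\le \sqrt{1-\epsilon}\,\sqrt{|M|^{2}+(2\lambda)|v|^{2}+\lambda^{2}s^{2}}
\end{equation*}
for all $(M,v,s)\in \RRR^{d\times d}\times\RRR^{d}\times\RRR$ and almost every $\boldx\in\Omega$; this is exactly the algebraic content of the Cordès bound. Specializing to $M=\overline{\nabla}_{h}(\nabla w_{h})$, $v=\nabla w_{h}$, $s=w_{h}$, integrating against a suitable test function adapted to $L_{h}^{p}$-duality, and invoking the discrete Miranda--Talenti estimate to absorb the right-hand side into $\|w_{h}\|_{W_{h}^{2,p}(\Omega)}$, yields the coercivity bound
\begin{equation*}
\|w_{h}\|_{W_{h}^{2,p}(\Omega)}\le C\,\|\tilde{A}\!:\!\overline{\nabla}_{h}(\nabla w_{h})+\tilde{\boldsymbol{b}}\!\cdot\!\nabla w_{h}+\tilde{c}w_{h}\|_{L_{h}^{p}(\Omega)},\qquad \forall w_{h}\in V_{h}.
\end{equation*}

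Existence and uniqueness of $u_{h}$ follow because (\ref{nondiv_fem}) is a square finite-dimensional linear system whose kernel is trivial by the coercivity estimate. For the convergence bound (\ref{nondiv_fem_con_ineq}), I would apply the coercivity estimate to $u_{h}-\chi_{h}\in V_{h}$ and use Galerkin-type consistency: the residual $\tilde{A}\!:\!\overline{\nabla}_{h}(\nabla(u_{h}-\chi_{h}))$ is rewritten using $\tilde{A}\!:\!D^{2}u=\tilde{f}-\tilde{\boldsymbol{b}}\!\cdot\!\nabla u-\tilde{c}u$, so that the consistency error reduces to terms of the form $\tilde{A}\!:\!(\overline{\nabla}_{h}(\nabla\chi_{h})-\overline{P}_{h}(D^{2}u))$, which are controlled by $\|u-\chi_{h}\|_{W_{h}^{2,p}(\Omega)}+\|\overline{P}_{h}(D^{2}u)-D^{2}u\|_{L^{p}(\Omega)}$. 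The main obstacle throughout is the discrete Miranda--Talenti estimate for $p\neq 2$ on non-convex polygons, where the classical integration-by-parts identity is unavailable and must be replaced by a duality/cut-off bootstrap parallel to that in Lemma~\ref{lemma_gloabl_estimate_with_reaction}.
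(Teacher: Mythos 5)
Your proposal takes exactly the paper's route: the paper gives no separate proof of Theorem~\ref{thm_global_estimate_cordes}, stating only that it is the special case of Theorem~\ref{thm_hjb_conv} with a one-element index set (so that (\ref{hjb_fem}) reduces to (\ref{nondiv_fem}) and (\ref{Cordes_coefficients_strong_hjb_fem_general}) to (\ref{Cordes_coefficients_strong_nondiv_fem_general})), and your direct sketch simply mirrors the actual proof of Theorem~\ref{thm_hjb_conv}. The only small inaccuracy is your hint for the implication (\ref{Cordes_coefficients_strong_nondiv_fem_general})~$\Rightarrow$~(\ref{Cordes_coefficients_strong_nondiv_pde_general}): it does not follow from $(\mathrm{Tr}A)^{2}\ge |A|^{2}$ alone, but after clearing denominators it reduces to the nonnegativity of a quadratic with vanishing discriminant (a completing-the-square in $\lambda^{-1}\vert c\vert$ using $(\mathrm{Tr}A)^{2}=|A|^{2}\cdot(\mathrm{Tr}A)^{2}/|A|^{2}$), which is still the elementary verification the paper leaves to the reader.
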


\section{Hamilton-Jacobi-Bellman equation}
\label{sec_hjb}

For the elliptic HJB equation (\ref{hjb_eqs_original}), the main theoretical results are 
Theorem~\ref{thm_hjb_pde_wellposedness} and Theorem~\ref{thm_hjb_conv}.  
Theorem~\ref{thm_hjb_pde_wellposedness} provides the well-posedness of 
strong solution in $W^{2,p}(\Omega)$ of the elliptic HJB equation (\ref{hjb_eqs_original}) if the coefficients 
satisfy the condition (\ref{Cordes_coefficients_strong_hjb_pde_general}), 
while Theorem~\ref{thm_hjb_conv} provides optimal convergence of the FEM (\ref{hjb_fem}) 
if the coefficients satisfy the condition (\ref{Cordes_coefficients_strong_hjb_fem_general}).

\subsection{Auxiliary results for HJB equation}

The identity (\ref{hjb_uncountable_countable_equivalent}) in the following Lemma~\ref{lemma_hjb_measurable} is 
essentially important in analysis in Section~\ref{sec_hjb}, since it implies the index set $\Lambda$ can be 
replaced by a countably subset $\tilde{\Lambda}$.

\begin{lemma}
\label{lemma_hjb_measurable}
We assume that the assumption (\ref{hjb_coeffs_alternatives}) is satisfied. 
Then for any $B \in [L^{1}(\Omega)]^{d\times d}, \boldsymbol{c}\in 
[L^{1}(\Omega)]^{d}, w \in L^{1}(\Omega)$, 
\begin{align*}
\sup_{\alpha \in \Lambda} [A^{\alpha}:B 
+ \boldsymbol{b}^{\alpha}\cdot \boldsymbol{c} + c^{\alpha}w - f^{\alpha}]
\end{align*}
is a measurable function on $\Omega$, and 
\begin{align}
\label{hjb_uncountable_countable_equivalent}
& \sup_{\alpha \in \Lambda} [A^{\alpha}:B
+ \boldsymbol{b}^{\alpha}\cdot \boldsymbol{c} + c^{\alpha}w - f^{\alpha}] \\
\nonumber
= & \sup_{\tilde{\alpha} \in \tilde{\Lambda}} [A^{\tilde{\alpha}}:B 
+ \boldsymbol{b}^{\tilde{\alpha}}\cdot \boldsymbol{c} 
+ c^{\tilde{\alpha}}w - f^{\tilde{\alpha}}]
\qquad \text{ in } \Omega \text{ almost everywhere}.
\end{align}

In addition, if (\ref{hjb_uniform_ellipticity},\ref{hjb_coeffs_bounds}) 
hold and  $B\in [L^{p}(\Omega)]^{d\times d}, \boldsymbol{c}\in [L^{p}(\Omega)]^{d}, 
w \in L^{p}(\Omega)$ and $\sup_{\tilde{\alpha} 
\in \tilde{\Lambda}}\vert f^{\tilde{\alpha}}\vert \in L^{p}(\Omega)$ 
for some $p \in [1, +\infty)$, then we have that 
$\sup_{\alpha \in \Lambda} [A^{\alpha}:B
+ \boldsymbol{b}^{\alpha}\cdot \boldsymbol{c} 
+ c^{\alpha}w - f^{\alpha}] \in L^{p}(\Omega)$ and a positive constant $C$ satisfying
\begin{align}
\label{hjb_operator_bound1}
& \Vert \sup_{\alpha \in \Lambda} [A^{\alpha}:B 
+ \boldsymbol{b}^{\alpha}\cdot \boldsymbol{c} 
+ c^{\alpha}w - f^{\alpha}]\Vert_{L^{p}(\Omega)} \\
\nonumber
\leq & C \big( (\sup_{\tilde{\alpha}\in \tilde{\Lambda}}\Vert A^{\tilde{\alpha}}
\Vert_{L^{\infty}(\Omega)}\Vert B\Vert_{L^{p}(\Omega)}
 +\sup_{\tilde{\alpha}\in \tilde{\Lambda}}\Vert 
\boldsymbol{b}^{\tilde{\alpha}}\Vert_{L^{\infty}(\Omega)}
\Vert \boldsymbol{c}\Vert_{L^{p}(\Omega)} 
+ \sup_{\tilde{\alpha}\in \tilde{\Lambda}}\Vert c^{\tilde{\alpha}}
\Vert_{L^{\infty}(\Omega)}\Vert w\Vert_{L^{p}(\Omega)} \\
\nonumber
& \qquad + \Vert \sup_{\tilde{\alpha} 
\in \tilde{\Lambda}}\vert f^{\tilde{\alpha}}\vert \Vert_{L^{p}(\Omega)} \big).
\end{align}
Here $\tilde{\Lambda} \subset \Lambda$ is introduced in the assumption 
(\ref{hjb_coeffs_alternatives}).
\end{lemma}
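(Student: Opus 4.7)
The plan is to first establish the pointwise identity~(\ref{hjb_uncountable_countable_equivalent}) on a full-measure subset of $\Omega$ using the density property in assumption~(\ref{hjb_coeffs_alternatives}), then use the countability of $\tilde{\Lambda}$ to get measurability, and finally derive the $L^p$ bound~(\ref{hjb_operator_bound1}) from a pointwise majorization.

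First, I would fix $B, \boldsymbol{c}, w$ and let $E_{*}$ be the $d$-dimensional Lebesgue null set on which $B$, $\boldsymbol{c}$, or $w$ fails to be finite; set $\tilde{E}:=E_{0}\cup E_{*}$, which is still null. For any $\boldx\in \Omega\setminus \tilde E$, the quantity $R(\boldx):=1+|B(\boldx)|+|\boldsymbol{c}(\boldx)|+|w(\boldx)|$ is finite. For any $\alpha\in\Lambda$ and any $\epsilon>0$, assumption~(\ref{hjb_coeffs_alternatives}) produces $\tilde{\alpha}\in\tilde{\Lambda}$ with the four coefficient differences at $\boldx$ summing to less than $\epsilon/R(\boldx)$, and then by Cauchy--Schwarz
\begin{align*}
& \bigl| [A^{\alpha}:B + \boldsymbol{b}^{\alpha}\cdot \boldsymbol{c} + c^{\alpha}w - f^{\alpha}](\boldx) - [A^{\tilde{\alpha}}:B + \boldsymbol{b}^{\tilde{\alpha}}\cdot \boldsymbol{c} + c^{\tilde{\alpha}}w - f^{\tilde{\alpha}}](\boldx)\bigr| \\
& \qquad \leq \bigl(|A^{\alpha}-A^{\tilde\alpha}|+|\boldsymbol{b}^{\alpha}-\boldsymbol{b}^{\tilde\alpha}|+|c^{\alpha}-c^{\tilde\alpha}|+|f^{\alpha}-f^{\tilde\alpha}|\bigr)(\boldx)\cdot R(\boldx) < \epsilon.
\end{align*}
This yields $\sup_{\alpha\in\Lambda}[\cdots](\boldx)\le \sup_{\tilde\alpha\in\tilde\Lambda}[\cdots](\boldx)$ pointwise on $\Omega\setminus\tilde E$, and the reverse inequality holds trivially because $\tilde{\Lambda}\subset \Lambda$. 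Hence~(\ref{hjb_uncountable_countable_equivalent}) is established almost everywhere.

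Because $\tilde{\Lambda}$ is countable, each summand $A^{\tilde\alpha}:B + \boldsymbol{b}^{\tilde\alpha}\cdot\boldsymbol{c} + c^{\tilde\alpha}w - f^{\tilde\alpha}$ is measurable (products and sums of measurable functions), and the countable supremum of measurable functions is measurable. By the identity just proved, the uncountable supremum on the left agrees with this measurable function almost everywhere, which gives measurability of the original expression.

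For the $L^p$ bound, I would start from the a.e. pointwise estimate
\begin{align*}
\Bigl|\sup_{\alpha\in\Lambda}[\cdots]\Bigr| = \Bigl|\sup_{\tilde{\alpha}\in\tilde{\Lambda}}[\cdots]\Bigr| \le \sup_{\tilde\alpha\in\tilde\Lambda}|A^{\tilde\alpha}||B|+\sup_{\tilde\alpha\in\tilde\Lambda}|\boldsymbol{b}^{\tilde\alpha}||\boldsymbol{c}|+\sup_{\tilde\alpha\in\tilde\Lambda}|c^{\tilde\alpha}||w|+\sup_{\tilde\alpha\in\tilde\Lambda}|f^{\tilde\alpha}|,
\end{align*}
then bound each of the three coefficient suprema by the uniform $L^{\infty}$ bounds from (\ref{hjb_uniform_ellipticity},\ref{hjb_coeffs_bounds}) and apply the triangle inequality in $L^p(\Omega)$. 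The hypothesis $\sup_{\tilde\alpha\in\tilde\Lambda}|f^{\tilde\alpha}|\in L^p(\Omega)$ handles the last term, producing~(\ref{hjb_operator_bound1}). The only subtlety is the careful handling of null sets when passing from the uncountable to the countable supremum, which is the step that makes the identity~(\ref{hjb_uncountable_countable_equivalent}) nontrivial; everything afterwards is essentially bookkeeping.
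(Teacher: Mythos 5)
Your proposal is correct and follows essentially the same argument as the paper: at a.e.\ point where $B$, $\boldsymbol{c}$, $w$ are finite, the density assumption~(\ref{hjb_coeffs_alternatives}) (with the $\epsilon/R(\boldx)$ scaling) lets you replace the sup over $\Lambda$ by the sup over the countable set $\tilde{\Lambda}$, after which measurability and the $L^{p}$ bound are routine. The only cosmetic difference is that the paper phrases this through super-level sets $S_{t}$ rather than proving the pointwise identity of the two suprema directly, but the key step is identical.
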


\begin{proof}
According to the assumption (\ref{hjb_coeffs_alternatives}), it is easy to see 
that for any $\boldx \in \Omega \setminus E_{0}$ and for any $\alpha\in\Lambda$, 
$a_{ij}^{\alpha}(\boldx),b_{i}^{\alpha}(\boldx),c^{\alpha}(\boldx), 
f^{\alpha}(\boldx)$ are all finite numbers for any $1\leq i,j \leq d$. 
Since $B \in [L^{1}(\Omega)]^{d\times d}, \boldsymbol{c}\in 
[L^{1}(\Omega)]^{d}, w \in L^{1}(\Omega)$, there is a set 
$E_{1} \subset \Omega$ with zero $d$-dimensional Lebesgue measure, 
such that $b_{ij}, c_{i}, w$ are all finite numbers in $\Omega\setminus 
E_{1}$ for any $1\leq i,j \leq d$.

We choose $t \in \mathbb{R}$ arbitrarily. It is easy to see
\begin{align*}
& \{\boldx \in \Omega: \sup_{\alpha \in \Lambda} [A^{\alpha}:B 
+ \boldsymbol{b}^{\alpha}\cdot \boldsymbol{c} + c^{\alpha}w - f^{\alpha}]
(\boldx) > t \} \\
= & \cup_{\alpha \in \Lambda} \{ \boldx \in \Omega : 
[A^{\alpha}:B + \boldsymbol{b}^{\alpha}\cdot \boldsymbol{c} 
+ c^{\alpha}w - f^{\alpha}](\boldx) > t \}.
\end{align*}
We denote by $S_{t} := \{\boldx \in \Omega: \sup_{\alpha \in \Lambda} 
[A^{\alpha}:B + \boldsymbol{b}^{\alpha}\cdot \boldsymbol{c} 
+ c^{\alpha}w - f^{\alpha}](\boldx) > t \}$.

Therefore, for any $\boldx_{0} \in S_{t} \setminus (E_{0}\cup E_{1})$, there 
is $\alpha_{0} \in \Lambda$ such that 
\begin{align*}
[A^{\alpha_{0}}:B
+ \boldsymbol{b}^{\alpha_{0}}\cdot \boldsymbol{c}  
+ c^{\alpha_{0}}w - f^{\alpha_{0}}](\boldx_{0}) > t
\end{align*}

We define $\epsilon_{0} = [A^{\alpha_{0}}:B
+ \boldsymbol{b}^{\alpha_{0}}\cdot \boldsymbol{c}  
+ c^{\alpha_{0}}w - f^{\alpha_{0}}](\boldx_{0}) - t > 0$. 
%By the definition of $E_{1}$ and the assumption~(\ref{hjb_coeffs_alternatives}), 
%we notice that $\epsilon_{0}< +\infty$. 
According to the assumption~(\ref{hjb_coeffs_alternatives}), there is 
$\tilde{\alpha}_{0} \in \tilde{\Lambda}$ such that 
\begin{align*}
& \vert (A^{\tilde{\alpha}_{0}} - A^{\alpha_{0}}):B(\boldx_{0})\vert 
+ \vert (\boldsymbol{b}^{\tilde{\alpha}_{0}} - \boldsymbol{b}^{\alpha_{0}})
\cdot \boldsymbol{c} (\boldx_{0}) \vert \\ 
& \qquad + \vert (c^{\tilde{\alpha}_{0}}- c^{\alpha_{0}})
w(\boldx_{0}) \vert + \vert 
(f^{\tilde{\alpha}_{0}} - f^{\alpha_{0}})(\boldx_{0}) 
\vert <  \frac{1}{2}\epsilon_{0}.
\end{align*}
So we have 
\begin{align*}
[A^{\tilde{\alpha}_{0}}:B + \boldsymbol{b}^{\tilde{\alpha}_{0}}\cdot 
\boldsymbol{c} + c^{\tilde{\alpha}_{0}}w - f^{\tilde{\alpha}_{0}}](\boldx_{0}) - t 
> \epsilon_{0} - \frac{1}{2} \epsilon_{0} > 0.
\end{align*}
Therefore, we have 
\begin{align*}
\boldx_{0} \in \{ \boldx \in\Omega \setminus(E_{0}\cup E_{1}):\sup_{\tilde{\alpha}
\in \tilde{\Lambda}}
[A^{\tilde{\alpha}}:B + \boldsymbol{b}^{\tilde{\alpha}}\cdot\boldsymbol{c} 
+ c^{\tilde{\alpha}}w - f^{\tilde{\alpha}}](\boldx) > t \}.
\end{align*}

Since $\boldx_{0} \in S_{t} \setminus (E_{0}\cup E_{1})$ is chosen 
arbitrarily, we have 
\begin{align*}
& S_{t}\setminus (E_{0} \cup E_{1}) = 
\{ \boldx \in \Omega \setminus (E_{0}\cup E_{1}): \sup_{\alpha \in \Lambda} 
[A^{\alpha}:B + \boldsymbol{b}^{\alpha}\cdot \boldsymbol{c} 
+ c^{\alpha}w - f^{\alpha}](\boldx) > t \} \\
= & \{ \boldx \in \Omega \setminus (E_{0}\cup E_{1}): \sup_{\tilde{\alpha} \in 
\tilde{\Lambda}} [A^{\tilde{\alpha}}:B 
+ \boldsymbol{b}^{\tilde{\alpha}}\cdot \boldsymbol{c} 
+ c^{\tilde{\alpha}}w - f^{\tilde{\alpha}}]
(\boldx) > t \} \\
= & \cup_{\tilde{\alpha}\in \tilde{\Lambda}} 
\{\boldsymbol{x}\in \Omega \setminus (E_{0}\cup E_{1}): 
[A^{\tilde{\alpha}}:B 
+ \boldsymbol{b}^{\tilde{\alpha}}\cdot \boldsymbol{c} 
+ c^{\tilde{\alpha}}w - f^{\tilde{\alpha}}]
(\boldx) > t \}.
\end{align*}

Since $\tilde{\Lambda}$ has at most countably many elements and 
$E_{0},E_{1}$ have zero $d$-dimensional Lebesgue measure, we have that 
$S_{t}$ is a measurable subset of $\Omega$. Since $t\in \mathbb{R}$ 
is chosen arbitrarily, we can conclude 
\begin{align*}
& \sup_{\alpha \in \Lambda} [A^{\alpha}:B 
+ \boldsymbol{b}^{\alpha}\cdot \boldsymbol{c} + c^{\alpha}w - f^{\alpha}] \\
\end{align*}
is a measurable function on $\Omega$ and 
(\ref{hjb_uncountable_countable_equivalent}) holds.

According to (\ref{hjb_uncountable_countable_equivalent}), (\ref{hjb_uniform_ellipticity},\ref{hjb_coeffs_bounds}) 
and the fact $\sup_{\tilde{\alpha} 
\in \tilde{\Lambda}}\vert f^{\tilde{\alpha}}\vert \in L^{p}(\Omega)$
imply that $\sup_{\alpha \in \Lambda} [A^{\alpha}:B 
+ \boldsymbol{b}^{\alpha}\cdot \boldsymbol{c} 
+ c^{\alpha}w - f^{\alpha}]\in L^{p}(\Omega)$ 
and (\ref{hjb_operator_bound1}) holds.
\end{proof}

\begin{lemma}
\label{lemma_hjb_equivalent}
We assume that the assumption (\ref{hjb_coeffs_alternatives}) is satisfied. 
For any $\alpha \in \Lambda$, let $\varrho^{\alpha} \in L^{\infty}(\Omega)$. 
We further assume that there are two positive constants $\nu_{0}\leq 
\nu_{1}$ and $E_{1} \subset \Omega$ with zero $d$-dimensional Lebesgue 
measure, such that 
\begin{subequations}
\label{weight_func_assumptions}
\begin{align}
\label{weight_func_assumption1}
& \nu_{0} \leq \varrho^{\alpha}(\boldx) \leq \nu_{1}, 
\qquad \forall \boldx \in \Omega \setminus E_{1}, 
\forall \alpha \in \Lambda; \\
\label{weight_func_assumption2}
& \forall \boldx\in \Omega \setminus E_{1} \text{ and } 
\forall \alpha \in \Lambda \text{ and } \forall \epsilon >0, 
\text{ there is } \tilde{\alpha} \in \tilde{\Lambda} \text{ satisfying } \\
\nonumber
& \qquad \vert \varrho^{\alpha}(\boldx) - \varrho^{\tilde{\alpha}}(\boldx) \vert 
< \epsilon.  
\end{align}
\end{subequations}
Here $\tilde{\Lambda} \subset \Lambda$ is introduced in 
the assumption~(\ref{hjb_coeffs_alternatives}).

We choose $w \in W^{2,1}(\Omega)$ arbitrarily. Then 
$\sup_{\alpha\in \Lambda}\varrho^{\alpha}[A^{\alpha}:D^{2}w 
+ \boldsymbol{b}^{\alpha}\cdot \nabla w + c^{\alpha}w - f^{\alpha}]$ 
is a measurable function on $\Omega$.
If $\sup_{\alpha\in \Lambda}[A^{\alpha}:D^{2}w 
+ \boldsymbol{b}^{\alpha}\cdot \nabla w + c^{\alpha}w - f^{\alpha}] = 0$ 
in $\Omega$, then 
\begin{align*}
\sup_{\alpha\in \Lambda}\varrho^{\alpha}[A^{\alpha}:D^{2}w 
+ \boldsymbol{b}^{\alpha}\cdot \nabla w + c^{\alpha}w - f^{\alpha}] = 0 
\qquad \text{ in } \Omega.
\end{align*}
\end{lemma}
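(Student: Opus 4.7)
The plan is to mimic the proof of Lemma~\ref{lemma_hjb_measurable} and establish its weighted analogue, namely the pointwise identity
\begin{align*}
\sup_{\alpha\in\Lambda}\varrho^{\alpha}\bigl[A^{\alpha}\!:\!D^{2}w+\boldsymbol{b}^{\alpha}\!\cdot\!\nabla w+c^{\alpha}w-f^{\alpha}\bigr]=\sup_{\tilde{\alpha}\in\tilde{\Lambda}}\varrho^{\tilde{\alpha}}\bigl[A^{\tilde{\alpha}}\!:\!D^{2}w+\boldsymbol{b}^{\tilde{\alpha}}\!\cdot\!\nabla w+c^{\tilde{\alpha}}w-f^{\tilde{\alpha}}\bigr]
\end{align*}
almost everywhere on $\Omega$. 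Once this is in hand, measurability follows immediately because the right-hand side is a countable supremum of measurable functions, and the second claim reduces to sign analysis against the uniform bounds $\nu_{0}\le\varrho^{\alpha}\le\nu_{1}$.

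To prove the identity I would replay the level-set argument of Lemma~\ref{lemma_hjb_measurable} with the product $\varrho^{\alpha}[A^{\alpha}\!:\!D^{2}w+\boldsymbol{b}^{\alpha}\!\cdot\!\nabla w+c^{\alpha}w-f^{\alpha}]$ in place of the unweighted expression. Fix $x$ outside the null set $E_{0}\cup E_{1}\cup\{|D^{2}w|+|\nabla w|+|w|=+\infty\}$, fix $\alpha\in\Lambda$ and $\epsilon>0$. Using~(\ref{hjb_coeffs_bounds}), (\ref{weight_func_assumption1}) and the pointwise finiteness of $|D^{2}w(x)|,|\nabla w(x)|,|w(x)|$ and of $|A^{\alpha}(x)|,|\boldsymbol{b}^{\alpha}(x)|,|c^{\alpha}(x)|,|f^{\alpha}(x)|$, pick $\delta=\delta(\epsilon,x)>0$ small enough that joint $\delta$-closeness of the coefficient data and $\delta$-closeness of $\varrho$ at $x$ forces the weighted expressions to differ by less than $\epsilon$ at $x$; this is a direct triangle-inequality accounting using $|\varrho|\le\nu_{1}$ on one factor and the listed bounds on the other. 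Apply~(\ref{hjb_coeffs_alternatives}) and~(\ref{weight_func_assumption2}) with this $\delta$ to produce a single $\tilde{\alpha}\in\tilde{\Lambda}$ simultaneously close to $\alpha$ in both senses---this is the one subtle spot and the main technical obstacle, since a \emph{common} $\tilde{\alpha}$ is needed for both density statements; the hypothesis that~(\ref{hjb_coeffs_alternatives}) and~(\ref{weight_func_assumption2}) are stated with the \emph{same} countable $\tilde{\Lambda}$ is what makes this step go through. The level-set argument of Lemma~\ref{lemma_hjb_measurable} then carries over verbatim.

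For the second claim, assume $\sup_{\alpha\in\Lambda}[A^{\alpha}\!:\!D^{2}w+\boldsymbol{b}^{\alpha}\!\cdot\!\nabla w+c^{\alpha}w-f^{\alpha}]=0$ on $\Omega$. By Lemma~\ref{lemma_hjb_measurable} applied with $B=D^{2}w$ and $\boldsymbol{c}=\nabla w$, the supremum over the countable set $\tilde{\Lambda}$ also vanishes a.e., so after removing a further null set $[A^{\tilde{\alpha}}\!:\!D^{2}w+\boldsymbol{b}^{\tilde{\alpha}}\!\cdot\!\nabla w+c^{\tilde{\alpha}}w-f^{\tilde{\alpha}}](x)\le 0$ for every $\tilde{\alpha}\in\tilde{\Lambda}$ pointwise, and the density property~(\ref{hjb_coeffs_alternatives}) propagates this to the corresponding inequality for every $\alpha\in\Lambda$ at the same $x$. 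Since $\varrho^{\alpha}>0$ this yields $\varrho^{\alpha}[A^{\alpha}\!:\!D^{2}w+\boldsymbol{b}^{\alpha}\!\cdot\!\nabla w+c^{\alpha}w-f^{\alpha}]\le 0$ a.e., and hence $\sup_{\alpha}\varrho^{\alpha}[\cdots]_{\alpha}\le 0$ a.e. For the matching lower bound at a good point $x$, pick a sequence $\tilde{\alpha}_{n}\in\tilde{\Lambda}$ with $[\cdots]_{\tilde{\alpha}_{n}}(x)\uparrow 0$; the sign condition together with $\varrho^{\tilde{\alpha}_{n}}\le\nu_{1}$ gives $\varrho^{\tilde{\alpha}_{n}}(x)[\cdots]_{\tilde{\alpha}_{n}}(x)\ge\nu_{1}[\cdots]_{\tilde{\alpha}_{n}}(x)\to 0$, so the weighted supremum is nonnegative and therefore equals zero.
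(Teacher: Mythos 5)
Your proposal follows essentially the same route as the paper's proof: reduce the weighted supremum to the countable set $\tilde{\Lambda}$ (the paper does this by observing that the weighted quadruple $(\varrho^{\alpha}A^{\alpha},\varrho^{\alpha}\boldsymbol{b}^{\alpha},\varrho^{\alpha}c^{\alpha},\varrho^{\alpha}f^{\alpha})$ again satisfies (\ref{hjb_coeffs_alternatives}) and then invoking Lemma~\ref{lemma_hjb_measurable}), and then obtain the vanishing at a.e.\ point by the same sign/squeeze argument, namely $\varrho^{\alpha}[\cdots]_{\alpha}\le 0$ for all $\alpha$ together with $0\ge \varrho^{\alpha_{i}}[\cdots]_{\alpha_{i}}\ge \nu_{1}[\cdots]_{\alpha_{i}}\to 0$ along a maximizing sequence (the paper takes this sequence in $\Lambda$, you in $\tilde{\Lambda}$, which is an immaterial difference). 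The one spot you flag---finding a single $\tilde{\alpha}$ that simultaneously approximates the coefficient data and the weight---is exactly the step the paper also passes over without argument when it asserts the transferred assumption for the weighted data, so your treatment matches the paper's there; just be aware that ``both assumptions use the same $\tilde{\Lambda}$'' is not by itself a proof of the simultaneous approximation (it holds automatically in the paper's application, where $\varrho^{\alpha}=\gamma^{\lambda,\alpha}$ is a fixed continuous pointwise function of the coefficients).
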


\begin{proof}
%Since $\tilde{\Lambda}$ is a countable set, there is a set $E_{1}^{\prime} 
%\subset \Omega$ with zero $d$-dimensional Lebesgue measure, such that 
%$a_{ij}^{\tilde{\alpha}}(\boldx), b_{i}^{\tilde{\alpha}}(\boldx), 
%c^{\tilde{\alpha}}(\boldx)$ and $f^{\tilde{\alpha}}(\boldx)$ are all 
%finite real numbers for any $\boldx \in \Omega \setminus E_{1}^{\prime}$ 
%and for any $\tilde{\alpha} \in \tilde{\Lambda}$ and any $1\leq i,j\leq d$. 
According to the assumption (\ref{hjb_coeffs_alternatives}), it is easy to see 
that for any $\boldx \in \Omega \setminus E_{0}$ and for any $\alpha\in\Lambda$, 
$a_{ij}^{\alpha}(\boldx),b_{i}^{\alpha}(\boldx),c^{\alpha}(\boldx), 
f^{\alpha}(\boldx)$ are all finite numbers for any $1\leq i,j \leq d$.
Then the assumption~(\ref{hjb_coeffs_alternatives}) is 
satisfied by $\left(\varrho^{\alpha}A^{\alpha}, \varrho^{\alpha}
\boldsymbol{b}^{\alpha}, \varrho^{\alpha}c^{\alpha}, 
\varrho^{\alpha}f^{\alpha}\right)$ with $\tilde{\Lambda}\subset \Lambda$ 
and the set $E_{0}\cup E_{1}$. 
So Lemma~\ref{lemma_hjb_measurable} implies that 
$\sup_{\alpha\in \Lambda}\varrho^{\alpha}[A^{\alpha}:D^{2}w 
+ \boldsymbol{b}^{\alpha}\cdot \nabla w + c^{\alpha}u - f^{\alpha}]$ 
is a measurable function on $\Omega$.

Since $w \in W^{2,1}(\Omega)$, there is a set $E_{1}^{\prime}\subset 
\Omega$ with zero $d$-dimensional Lebesgue measure such that 
$\frac{\partial^{2}w}{\partial x_{i}\partial x_{j}}(\boldx), 
\frac{\partial w}{\partial x_{i}}(\boldx), w(\boldx)$ are all finite numbers 
for any $\boldx \in \Omega \setminus E_{1}^{\prime}$ and 
for any $1\leq i,j \leq d$.
We choose $\boldx_{0} \in \Omega \setminus (E_{0}\cup E_{1} \cup 
E_{1}^{\prime})$ arbitrarily. We claim that if 
$\sup_{\alpha\in \Lambda}[A^{\alpha}:D^{2}w + \boldsymbol{b}^{\alpha}\cdot \nabla w 
+ c^{\alpha}w - f^{\alpha}](\boldx_{0}) = 0$, then 
\begin{align*}
\sup_{\alpha\in \Lambda} \varrho^{\alpha}[A^{\alpha}:D^{2}w 
+ \boldsymbol{b}^{\alpha}\cdot \nabla w 
+ c^{\alpha}w - f^{\alpha}](\boldx_{0}) = 0.
\end{align*}
The proof will be complete if this claim is true.

We prove this claim in the following. 
The fact $\sup_{\alpha\in \Lambda}[A^{\alpha}:D^{2}w 
+ \boldsymbol{b}^{\alpha}\cdot \nabla w 
+ c^{\alpha}w - f^{\alpha}](\boldx_{0}) = 0$ is equivalent to  
\begin{subequations}
\label{point_sup_props}
\begin{align}
\label{point_sup_prop1} 
& [A^{\alpha}:D^{2}w + \boldsymbol{b}^{\alpha}\cdot \nabla w 
+ c^{\alpha}w - f^{\alpha}](\boldx_{0}) \leq 0, \qquad 
\forall \alpha \in \Lambda; \\
\label{point_sup_prop2} 
& \text{there is a sequence } \{ \alpha_{i} \}_{i=1}^{+\infty} \subset \Lambda 
\text{ such that } \\
\nonumber 
& \qquad \lim_{i\rightarrow +\infty}[A^{\alpha_{i}}:D^{2}w 
+ \boldsymbol{b}^{\alpha_{i}}
\cdot \nabla w + c^{\alpha_{i}}w - f^{\alpha_{i}}](\boldx_{0}) = 0.
\end{align}
\end{subequations}
By (\ref{weight_func_assumption1},\ref{point_sup_prop1}), we have 
\begin{align*}
\varrho^{\alpha}[A^{\alpha}:D^{2}w + \boldsymbol{b}^{\alpha}\cdot \nabla w 
+ c^{\alpha}w - f^{\alpha}](\boldx_{0}) \leq 0, \qquad 
\forall \alpha \in \Lambda.
\end{align*}
On the other hand, (\ref{weight_func_assumption1},\ref{point_sup_prop1},
\ref{point_sup_prop2}) implies 
\begin{align*}
\lim_{i\rightarrow +\infty} \varrho^{\alpha}[A^{\alpha_{i}}:D^{2}w 
+ \boldsymbol{b}^{\alpha_{i}}
\cdot \nabla w + c^{\alpha_{i}}w - f^{\alpha_{i}}](\boldx_{0}) = 0. 
\end{align*}
So the claim has been proven. Therefore, the proof is complete.
\end{proof}

The following Lemma~\ref{lemma_equi_uniform_continuity_to_alternatives} shows that 
the assumptions on coefficients and source term of (\ref{hjb_eqs_original}) used in 
\cite{SmearsSuli2014} and (\cite[Proposition~$4.3$]{GallistlTran2025} imply the assumption
(\ref{hjb_coeffs_alternatives}).
\begin{lemma}
\label{lemma_equi_uniform_continuity_to_alternatives}
We assume that for any $\epsilon >0$, there is $\delta > 0$ such that 
\begin{align}
\label{hjb_coeffs_equi_uniform_continuity} 
\vert A^{\alpha}(\boldx) - A^{\alpha}(\boldy) \vert + \vert \boldsymbol{b}^{\alpha}(\boldx) 
- \boldsymbol{b}^{\alpha}(\boldy)  \vert + \vert c^{\alpha}(\boldx) - c^{\alpha}(\boldy) \vert 
+ \vert f^{\alpha}(\boldx) - f^{\alpha}(\boldy) \vert < \epsilon, 
\end{align} 
for any $\alpha \in \Lambda$ and for any $\boldx,\boldy \in \Omega$ satisfying 
$\vert \boldx - \boldy \vert < \delta$.
Then the assumption (\ref{hjb_coeffs_alternatives}) is satisfied.
\end{lemma}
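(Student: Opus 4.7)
I would build the countable subset $\tilde{\Lambda}$ by exploiting separability at two levels: first the separability of $\overline{\Omega}$, and then the separability of the image set $\{(A^{\alpha}(\boldx),\boldsymbol{b}^{\alpha}(\boldx),c^{\alpha}(\boldx),f^{\alpha}(\boldx))\colon \alpha \in \Lambda\}$ inside the finite-dimensional space $\mathbb{R}^{d^{2}+d+2}$ at each fixed $\boldx$. The equicontinuity hypothesis (\ref{hjb_coeffs_equi_uniform_continuity}) will then let me transfer pointwise approximation at a sampled point to approximation at an arbitrary point, via a triangle inequality.

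Concretely, fix any countable dense set $D \subset \overline{\Omega}$. For each $\boldy \in D$, the set
\[
V_{\boldy} := \{(A^{\alpha}(\boldy),\boldsymbol{b}^{\alpha}(\boldy),c^{\alpha}(\boldy),f^{\alpha}(\boldy))\colon \alpha \in \Lambda\} \subset \mathbb{R}^{d^{2}+d+2}
\]
is a subset of a separable Euclidean space, so I can choose a countable $\tilde{\Lambda}_{\boldy} \subset \Lambda$ whose images under the evaluation map are dense in $V_{\boldy}$. Define $\tilde{\Lambda} := \bigcup_{\boldy \in D}\tilde{\Lambda}_{\boldy}$, a countable subset of $\Lambda$. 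In (\ref{hjb_coeffs_alternatives}) I may take $E_{0} = \emptyset$, since the hypothesis (\ref{hjb_coeffs_equi_uniform_continuity}) is formulated for every $\boldx \in \Omega$.

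To verify the remaining condition in (\ref{hjb_coeffs_alternatives}), let $\boldx \in \Omega$, $\alpha \in \Lambda$, and $\epsilon > 0$ be given. Using (\ref{hjb_coeffs_equi_uniform_continuity}) with $\epsilon/3$, obtain $\delta > 0$; by density of $D$ pick $\boldy \in D$ with $|\boldx - \boldy| < \delta$. The equicontinuity bound applied both to $\alpha$ and to any $\tilde{\alpha}\in\tilde{\Lambda}$ yields
\[
|A^{\alpha}(\boldx) - A^{\alpha}(\boldy)| + |\boldsymbol{b}^{\alpha}(\boldx) - \boldsymbol{b}^{\alpha}(\boldy)| + |c^{\alpha}(\boldx) - c^{\alpha}(\boldy)| + |f^{\alpha}(\boldx) - f^{\alpha}(\boldy)| < \epsilon/3,
\]
with the same estimate when $\alpha$ is replaced by $\tilde{\alpha}$. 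Density of the images of $\tilde{\Lambda}_{\boldy}$ in $V_{\boldy}$ then lets me choose $\tilde{\alpha}\in \tilde{\Lambda}_{\boldy}\subset \tilde{\Lambda}$ with $|A^{\alpha}(\boldy)-A^{\tilde{\alpha}}(\boldy)|+|\boldsymbol{b}^{\alpha}(\boldy)-\boldsymbol{b}^{\tilde{\alpha}}(\boldy)|+|c^{\alpha}(\boldy)-c^{\tilde{\alpha}}(\boldy)|+|f^{\alpha}(\boldy)-f^{\tilde{\alpha}}(\boldy)| < \epsilon/3$. A triangle inequality combining these three estimates produces the $\epsilon$-bound required by (\ref{hjb_coeffs_alternatives}).

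The main technical observation is that the finite-dimensional codomain $\mathbb{R}^{d^{2}+d+2}$ forces each $V_{\boldy}$ to be separable regardless of how large $\Lambda$ is, so a diagonal construction over the countable set $D$ suffices. I do not anticipate any substantial obstacle beyond this separability argument; the rest is routine bookkeeping with the triangle inequality.
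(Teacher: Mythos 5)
Your proposal is correct and follows essentially the same route as the paper's proof: sample the domain on a countable dense set, at each sample point extract countably many indices whose coefficient values are dense in the image set, and conclude with the triangle inequality combined with the equicontinuity hypothesis (with $E_{0}=\emptyset$). The only differences are cosmetic — the paper reduces (without loss of generality) to the single family $f^{\alpha}$ and realizes the density by an explicit dyadic discretization of the grid points $\Omega_{n}$ and of the range, whereas you invoke abstract separability of subsets of $\mathbb{R}^{d^{2}+d+2}$ to treat all four coefficient families at once; just take your dense set of sample points inside $\Omega$ (rather than $\overline{\Omega}$) so that the coefficients are defined there.
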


\begin{proof}
Without losing of generality, we assume $A^{\alpha}, \boldsymbol{b}$ and $c^{\alpha}$ 
are zero matrices, zero vectors and zero for any $\alpha \in \Lambda$, respectively.

Let $n$ be an arbitrary positive integer. We define $\Omega_{n} = \Omega \cap 
\{(l_{1}2^{-n},\cdots, l_{d}2^{-n}): (l_{1},\cdots,l_{d})\in \mathbb{Z}^{d}\}$.
We define a countable subset $\Lambda_{n}$ of $\Lambda$. 
For any $\boldy \in \Omega_{n}$ and any $l\ in \mathbb{Z}$, if there is $\alpha \in \Lambda$ 
such that $l2^{-n}\leq f^{\alpha} < (l+1)2^{-n}$, then we put this $\alpha$ into $\Lambda_{n}$.
Notice that though there may be more than one $\alpha \in \Lambda$ qualified, we only choose 
at most one $\alpha$ in the above procedure. Therefore, $\Lambda_{n}$ has at most countably many 
elements. We define $\tilde{\Lambda} = \cup_{n=1}^{\infty}\Lambda_{n} \subset \tilde{\Lambda}$.

We choose $\boldx \in \Omega$ and $\alpha \in \Lambda$ and $\epsilon >0$ arbitrarily. 
Then there is a sequence $\{\boldy_{n}\}_{n=1}^{\infty} \subset \tilde{\Lambda}$ such that 
$ \lim_{n \rightarrow +\infty}\vert \boldy_{n} - \boldx \vert = 0$ and $\boldy_{n}\in \Lambda_{n}$ 
for any positive integer $n$. Then by (\ref{hjb_coeffs_equi_uniform_continuity}), there is a positive integer $
N_{0}$ such that 
\begin{align*}
\vert f^{\beta}(\boldy_{n}) - f^{\beta}(\boldx) \vert < \frac{\epsilon}{3}, \qquad 
2^{-n} < \frac{\epsilon}{3},
\end{align*}
for any $n \geq N_{0}$ and any $\beta \in \Lambda$.

There is an integer $l_{0}$ such that $l2^{-N_{0}} \leq f^{\alpha}(\boldy_{N_{0}})< (l+1)2^{-N_{0}}$. 
By the construction of $\Lambda_{n} \subset \tilde{\alpha}$, there is $\tilde{\alpha} \in \Lambda_{n}$ such that 
\begin{align*}
l_{0}2^{-N_{0}} \leq f^{\tilde{\alpha}}(\boldy_{N_{0}}) < (l_{0} +1)2^{-N_{0}}.
\end{align*}
Then we have 
\begin{align*}
\vert f^{\tilde{\alpha}}(\boldy_{N_{0}}) - f^{\alpha}(\boldy_{N_{0}})\vert < 2^{-N_{0}}< \frac{\epsilon}{3}.
\end{align*}
We notice that 
\begin{align*}
\vert f^{\alpha}(\boldy_{n}) - f^{\alpha}(\boldx) \vert < \frac{\epsilon}{3}, \qquad 
\vert f^{\tilde{\alpha}}(\boldy_{n}) - f^{\tilde{\alpha}}(\boldx) \vert < \frac{\epsilon}{3}.
\end{align*}
Then we have that 
\begin{align*}
\vert f^{\alpha}(\boldx) - f^{\tilde{\alpha}}(\boldx) \vert < \epsilon.
\end{align*}
We can conclude that the proof is complete.
\end{proof}

\subsection{Well-posedness of Hamilton-Jacobi-Bellman equation}

When the domain is convex and $p=2$, the condition (\ref{Cordes_coefficients_strong_hjb_pde_general}) 
for coefficients in Theorem~\ref{thm_hjb_pde_wellposedness} is identical to the Cordes condition 
used in \cite{SmearsSuli2014}.

\begin{theorem}
\label{thm_hjb_pde_wellposedness}
Let $\Omega$ be a Lipschitz polyhedral in $\mathbb{R}^{d}$ ($d=2,3$). 
We assume that $\sup_{\tilde{\alpha} \in \tilde{\Lambda}}\vert f^{\tilde{\alpha}} 
\vert \in L^{p}(\Omega)$, and (\ref{hjb_uniform_ellipticity},\ref{hjb_coeffs_bounds}) 
and the assumption~(\ref{hjb_coeffs_alternatives}) hold.  
There is a constant $0\leq \underline{\kappa}<1$ 
which may depend on $p$, such that if 
\begin{align}
\label{Cordes_coefficients_strong_hjb_pde_general}
& \dfrac{ \vert A^{\alpha}(\boldx)\vert^{2} 
+ (2\lambda)^{-1}\vert \boldsymbol{b}^{\alpha}(\boldx)\vert^{2} 
+ \lambda^{-2}\vert c^{\alpha}(\boldx)\vert^{2}}{\left( 
\text{Tr}A^{\alpha}(\boldx) + \lambda^{-1}
\vert c^{\alpha}(\boldx)\vert\right)^{2}} \\
\nonumber 
\leq & \dfrac{1}{d+ \epsilon}, \qquad
\forall \boldx \in \Omega \text{ almost everywhere},
\forall \alpha \in \Lambda,
\end{align}
for some $\epsilon \in [\underline{\kappa},1)$ and $\lambda > 0$,
then there is a unique solution $u \in W^{2,p}(\Omega)
\cap W_{0}^{1,p}(\Omega)$ satisfying (\ref{hjb_eqs_original}). 
Furthermore, there is a positive constant $C$ which may depend on $p$, such that
\begin{align*}
\Vert u\Vert_{W^{2,p}(\Omega)} \leq C \Vert \sup_{\tilde{\alpha}
\in \tilde{\Lambda}}\vert f^{\tilde{\alpha}}\vert\Vert_{L^{p}(\Omega)}.
\end{align*}
Here $\tilde{\Lambda}\subset \Lambda$ is introduced in 
the assumption~(\ref{hjb_coeffs_alternatives}). 
$p$ is valid in the range described in (\ref{p_range1}). 
In addition, $\underline{\kappa} = 0$ if $p=2$ and 
the domain is a convex polyhedra in $\mathbb{R}^{d}$ ($d=2,3$).
\end{theorem}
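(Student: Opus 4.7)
The plan is a Banach fixed-point argument on an appropriate Poisson-type operator, adapting the Cord\`es strategy of Smears--Suli to the $L^{p}$ setting on Lipschitz polyhedra. First, by Lemma~\ref{lemma_hjb_measurable} we may replace the supremum over the (possibly uncountable) index set $\Lambda$ by the countable supremum over $\tilde{\Lambda}\subset\Lambda$ furnished by (\ref{hjb_coeffs_alternatives}), which settles measurability of every nonlinear expression appearing below. Introduce the weight
\[
\gamma^{\alpha}(\boldx) \defn \frac{\mathrm{Tr}\, A^{\alpha}(\boldx) + \lambda^{-1}|c^{\alpha}(\boldx)|}
{|A^{\alpha}(\boldx)|^{2} + (2\lambda)^{-1}|\boldsymbol{b}^{\alpha}(\boldx)|^{2} + \lambda^{-2}|c^{\alpha}(\boldx)|^{2}},
\]
which is uniformly bounded above and below by positive constants owing to (\ref{hjb_uniform_ellipticity}), (\ref{hjb_coeffs_bounds}) and (\ref{Cordes_coefficients_strong_hjb_pde_general}). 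By Lemma~\ref{lemma_hjb_equivalent}, solving (\ref{hjb_eqs_original}) is equivalent to solving the weighted HJB equation obtained by inserting the factor $\gamma^{\tilde{\alpha}}$ inside the supremum.

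Define the map $T:W^{2,p}(\Omega)\cap W_{0}^{1,p}(\Omega)\to W^{2,p}(\Omega)\cap W_{0}^{1,p}(\Omega)$ by $Tv\defn u$, where $u$ is the unique solution furnished by Lemma~\ref{lemma_Poisson_regularity} of
\[
\Delta u - \lambda u = \Delta v - \lambda v - \sup_{\tilde{\alpha}\in\tilde{\Lambda}}\gamma^{\tilde{\alpha}}\bigl[A^{\tilde{\alpha}}:D^{2}v + \boldsymbol{b}^{\tilde{\alpha}}\cdot\nabla v + c^{\tilde{\alpha}}v - f^{\tilde{\alpha}}\bigr]
\]
in $\Omega$ with zero boundary data; the right-hand side belongs to $L^{p}(\Omega)$ by Lemma~\ref{lemma_hjb_measurable} and the bound (\ref{hjb_operator_bound1}), and invertibility of $\Delta-\lambda I$ on $W^{2,p}\cap W_{0}^{1,p}$ for $\lambda>0$ follows from Lemma~\ref{lemma_Poisson_regularity} via standard Fredholm perturbation together with the sign condition $c^{\alpha}\leq 0$. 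Fixed points of $T$ are precisely the strong solutions of the weighted, hence of the original, HJB equation.

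The decisive step is to prove $T$ is a contraction. For $v_{1},v_{2}\in W^{2,p}\cap W_{0}^{1,p}$ with $w\defn v_{1}-v_{2}$, the elementary bound $|\sup_{\alpha}a^{\alpha}-\sup_{\alpha}b^{\alpha}|\leq\sup_{\alpha}|a^{\alpha}-b^{\alpha}|$ reduces matters to a pointwise estimate. Regarding $\boldV^{\alpha}\defn(A^{\alpha},(2\lambda)^{-1/2}\boldsymbol{b}^{\alpha},\lambda^{-1}c^{\alpha})$ and $\boldsymbol{W}\defn(I_{d},\boldsymbol{0},-1)$ as vectors in $\mathbb{R}^{d\times d}\times\mathbb{R}^{d}\times\mathbb{R}$, the choice of $\gamma^{\alpha}$ as the orthogonal-projection coefficient gives the identity $|\gamma^{\alpha}\boldV^{\alpha}-\boldsymbol{W}|^{2}=(d+1)-(\boldV^{\alpha}\cdot\boldsymbol{W})^{2}/|\boldV^{\alpha}|^{2}$, and (\ref{Cordes_coefficients_strong_hjb_pde_general}) forces this quantity to be at most $1-\epsilon$. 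Pairing with $(D^{2}w,(2\lambda)^{1/2}\nabla w,\lambda w)$ and applying Cauchy--Schwarz yields
\[
\bigl|\gamma^{\alpha}[A^{\alpha}:D^{2}w + \boldsymbol{b}^{\alpha}\cdot\nabla w + c^{\alpha}w] - (\Delta w - \lambda w)\bigr|^{2} \leq (1-\epsilon)\bigl(|D^{2}w|^{2} + 2\lambda|\nabla w|^{2} + \lambda^{2}w^{2}\bigr)
\]
almost everywhere in $\Omega$. Integrating in $L^{p}$ and invoking Lemma~\ref{lemma_Poisson_regularity} together with Poincar\'e and $W^{1,p}$ interpolation delivers $\Vert Tv_{1}-Tv_{2}\Vert_{W^{2,p}(\Omega)}\leq\underline{C}_{p}\sqrt{1-\epsilon}\,\Vert v_{1}-v_{2}\Vert_{W^{2,p}(\Omega)}$ modulo absorbable lower-order terms, so that the contraction threshold is $\underline{\kappa}\defn 1-\underline{C}_{p}^{-2}$. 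The Banach fixed-point theorem then yields existence, uniqueness, and the asserted bound $\Vert u\Vert_{W^{2,p}(\Omega)}\leq C\Vert\sup_{\tilde{\alpha}\in\tilde{\Lambda}}|f^{\tilde{\alpha}}|\Vert_{L^{p}(\Omega)}$.

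The principal obstacle is precisely the $L^{p}$-to-$W^{2,p}$ Poisson constant $\underline{C}_{p}$ from Lemma~\ref{lemma_Poisson_regularity}, which generally exceeds one away from the $L^{2}$ setting and thereby forces the threshold $\underline{\kappa}>0$ in the Cord\`es condition. When $p=2$ and $\Omega$ is a convex polyhedron, the Miranda--Talenti inequality (\ref{Poisson_regularity_ineq3}) gives $\underline{C}_{p}=1$, and the integration-by-parts identity $\Vert D^{2}w\Vert_{L^{2}}^{2}+2\lambda\Vert\nabla w\Vert_{L^{2}}^{2}+\lambda^{2}\Vert w\Vert_{L^{2}}^{2}\leq\Vert\Delta w-\lambda w\Vert_{L^{2}}^{2}$ valid for $w\in H^{2}\cap H_{0}^{1}$ on convex domains makes the displayed bound a sharp contraction for any $\epsilon>0$, recovering $\underline{\kappa}=0$. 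Accommodating merely $L^{\infty}$ lower-order terms $\boldsymbol{b}^{\alpha},c^{\alpha}$ without any continuity hypothesis in $\alpha$ is a secondary technicality that is handled uniformly by the countability reduction of Lemma~\ref{lemma_hjb_measurable}; the same structure also covers the non-convex-polygon range of $p$ in (\ref{p_range1}), since Lemma~\ref{lemma_Poisson_regularity} provides the required $W^{2,p}$ Poisson regularity there with merely a larger $\underline{C}_{p}$.
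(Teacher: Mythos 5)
Your proposal is correct and essentially reproduces the paper's proof: reduction to the countable set $\tilde{\Lambda}$ via Lemma~\ref{lemma_hjb_measurable} and Lemma~\ref{lemma_hjb_equivalent}, the same $\lambda$-dependent Cord\`es weight $\gamma^{\lambda,\alpha}$, a Banach fixed-point argument on the resolvent $(\Delta-\lambda)^{-1}$ in the weighted $W^{2,p}_{\lambda}$-type norm driven by the pointwise Cauchy--Schwarz/Cord\`es estimate (which the paper delegates to the proof of Lemma~1 of Smears--S\"uli), and the same identification of $\underline{\kappa}$ through the product of the two constants, with $\underline{\kappa}=0$ for $p=2$ on convex domains via Miranda--Talenti. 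The only inessential deviations are that the paper obtains the bound on $(\Delta-\lambda)^{-1}$ from its own Theorem~\ref{thm_A_continuous_complete} rather than your Lemma~\ref{lemma_Poisson_regularity}-plus-Fredholm route (where the relevant sign is $\lambda>0$, not $c^{\alpha}\leq 0$), and that the contraction should be run directly in the weighted norm, in which case no lower-order terms need to be absorbed.
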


\begin{remark}
\label{remark_hjb_pde_wellposedness}
In special case $\boldsymbol{b}^{\alpha} = \boldsymbol{0}$ and $c^{\alpha} = 0$ 
for any $\alpha \in \Lambda$, Theorem~\ref{thm_hjb_pde_wellposedness} still holds 
if the condition (\ref{Cordes_coefficients_strong_hjb_pde_general}) is replaced by 
\begin{align}
\label{Cordes_coefficients_strong_hjb_pde_special}
& \dfrac{\vert A^{\alpha}(\boldx)\vert^{2}}{\left( 
\text{Tr}A^{\alpha}(\boldx) \right)^{2}} 
\leq \dfrac{1}{d -1 + \epsilon}, \qquad
\forall \boldx \in \Omega \text{ almost everywhere},
\forall \alpha \in \Lambda,
\end{align}
for some $\epsilon \in [\underline{\kappa}^{\prime},1)$ 
where $\underline{\kappa}^{\prime} \in [0,1)$. 
In addition, $\underline{\kappa}^{\prime} = 0$ if $p=2$ and 
the domain is a convex polyhedra in $\mathbb{R}^{d}$ ($d=2,3$).
The proof is similar.
We would like to point out that the condition (\ref{Cordes_coefficients_strong_hjb_pde_general}) 
and the condition (\ref{Cordes_coefficients_strong_hjb_pde_special}) are exactly 
the Cordes conditions in \cite{SmearsSuli2014}.
\end{remark}

\begin{proof}
Let $p$ be any number in the range described in this Theorem.

We denote by 
\begin{align*}
\Vert v\Vert_{W_{\lambda}^{2,p}(\Omega)}= 
\big( \Vert D^{2}v\Vert_{L^{p}(\Omega)}^{p} 
+ 2\lambda\Vert \nabla v\Vert_{L^{p}(\Omega)}^{p} 
+ \lambda^{2}\Vert v\Vert_{L^{p}(\Omega)}^{p} \big)^{1/p}, 
\qquad \forall v \in W^{2,p}(\Omega)\cap W_{0}^{1,p}(\Omega).
\end{align*}

According to the assumption (\ref{hjb_coeffs_alternatives}), it is easy to see 
that for any $\boldx \in \Omega \setminus E_{0}$ and for any $\alpha\in\Lambda$, 
$a_{ij}^{\alpha}(\boldx),b_{i}^{\alpha}(\boldx),c^{\alpha}(\boldx), 
f^{\alpha}(\boldx)$ are all finite numbers for any $1\leq i,j \leq d$. 

For any $\alpha \in \Lambda$, we define a function $\gamma^{\lambda, \alpha}$ 
on $\Omega$ such that for any $\boldx\in\Omega$ almost everywhere,
\begin{align}
\label{def_gamma_lambda_alpha}
\gamma^{\lambda,\alpha}(\boldx) 
= \dfrac{\Sigma_{1\leq i \leq d}a_{ii}^{\alpha}
(\boldx)+ \lambda^{-1}\vert c^{\alpha}(\boldx)\vert}{\vert A^{\alpha}(\boldx)
\vert^{2} + (2\lambda)^{-1}\vert \boldsymbol{b}^{\alpha}(\boldx)\vert^{2} 
+ (\lambda^{-1}c^{\alpha}(\boldx))^{2}}. 
\end{align}
Here $\vert B\vert = \big(\Sigma_{1\leq i,j \leq d}\vert b_{ij} \vert^{2}
\big)^{1/2}$ for any $B \in \mathbb{R}^{d\times d}$ and 
$\vert \boldsymbol{\beta} \vert = \big(\Sigma_{1\leq i \leq d}  
\vert \beta_{i}\vert^{2} \big)^{1/2}$ for any $\boldsymbol{\beta}
\in \mathbb{R}^{d}$.

By (\ref{hjb_uniform_ellipticity},\ref{hjb_coeffs_bounds}) and the fact 
$\tilde{\Lambda}$ is a countable set, there is a set $E_{1}
\supset E_{0}$ with zero $d$-dimensional Lebesgue measure such that   

\begin{align*}
& \gamma^{\lambda,\tilde{\alpha}}(\boldx) 
= \dfrac{\Sigma_{1\leq i \leq d}a_{ii}^{\tilde{\alpha}}
(\boldx)+ \lambda^{-1}\vert c^{\tilde{\alpha}}(\boldx)
\vert}{\vert A^{\tilde{\alpha}}(\boldx)\vert^{2}+(2\lambda)^{-1}
\vert \boldsymbol{b}^{\tilde{\alpha}}(\boldx)\vert^{2} 
+ (\lambda^{-1}c^{\tilde{\alpha}}(\boldx))^{2}}, \\
& C_{1} \leq \gamma^{\lambda,\tilde{\alpha}}(\boldx) \leq C_{2},
\end{align*}
for any $\boldx \in \Omega \setminus E_{1}$ and 
any $\tilde{\alpha} \in \tilde{\Lambda}$. 
Here $0< C_{1} \leq C_{2}<+\infty$ are two constants independent of 
$\boldx \in \Omega \setminus E_{1}$ and 
$\tilde{\alpha} \in \tilde{\Lambda}$.
Then by the assumption (\ref{hjb_coeffs_alternatives}), we have 
\begin{align*}
& \gamma^{\lambda,\alpha}(\boldx) 
= \dfrac{\Sigma_{1\leq i \leq d}a_{ii}^{\alpha}
(\boldx)+ \lambda^{-1}\vert c^{\alpha}(\boldx)\vert}{ 
\vert A^{\alpha}(\boldx)\vert^{2}+(2\lambda)^{-1}
\vert \boldsymbol{b}^{\alpha}(\boldx)\vert^{2}
+(\lambda^{-1}c^{\alpha}(\boldx))^{2}},\\
& C_{1} \leq \gamma^{\lambda,\alpha}(\boldx) \leq C_{2},
\end{align*}
for any $\boldx\in\Omega\setminus E_{1}$ and 
any $\alpha \in \Lambda$.

Obviously, the assumption (\ref{hjb_coeffs_alternatives}) is satisfied by 
$(\gamma^{\lambda,\alpha}A^{\alpha},\gamma^{\lambda,\alpha}\boldsymbol{b}^{\alpha},
\gamma^{\lambda,\alpha}c^{\alpha},\gamma^{\lambda,\alpha}f^{\alpha})$ 
with $\tilde{\Lambda}\subset \Lambda$ and the set $E_{1}$. Then by 
Lemma~\ref{lemma_hjb_measurable} and (\ref{hjb_uniform_ellipticity},\ref{hjb_coeffs_bounds}), we have that for any 
$w\in W^{2,p}(\Omega) \cap W_{0}^{1,p}(\Omega)$,
\begin{align}
\label{hjb_sol_bound}
& \sup_{\alpha \in \Lambda} \gamma^{\lambda,\alpha}[A^{\alpha}:D^{2}w 
+ \boldsymbol{b}^{\alpha}\cdot \nabla w 
+ c^{\alpha}w - f^{\alpha}] \in L^{p}(\Omega),\\
\nonumber
& \Vert \sup_{\alpha \in \Lambda} \gamma^{\lambda,\alpha}[A^{\alpha}:D^{2}w 
+ \boldsymbol{b}^{\alpha}\cdot \nabla w 
+ c^{\alpha}w - f^{\alpha}]\Vert_{L^{p}(\Omega)} \\
\nonumber
\leq & C \big( \Vert w\Vert_{W^{2,p}(\Omega)} + \Vert \sup_{\tilde{\alpha}
\in \tilde{\Lambda}} \vert f^{\tilde{\alpha}} \vert \Vert_{L^{p}(\Omega)}\big).
\end{align}
Then Lemma~\ref{lemma_hjb_equivalent} implies that 
for any $w \in W^{2,p}(\Omega)\cap W_{0}^{1,p}(\Omega)$, 
$\sup_{\alpha \in \Lambda} [A^{\alpha}:D^{2}w 
+ \boldsymbol{b}^{\alpha}\cdot \nabla w 
+ c^{\alpha}w - f^{\alpha}] = 0$ in $\Omega$ is equivalent to 
\begin{align*}
\sup_{\alpha \in \Lambda} \gamma^{\lambda,\alpha}[A^{\alpha}:D^{2}w 
+ \boldsymbol{b}^{\alpha}\cdot \nabla w 
+ c^{\alpha}w - f^{\alpha}] = 0 \quad \text{ in } \Omega,
\end{align*}
which is equivalent to 
\begin{align}
\label{hjb_pde_equivalent}
\sup_{\tilde{\alpha} \in \tilde{\Lambda}} 
\gamma^{\lambda,\tilde{\alpha}}[A^{\tilde{\alpha}}:D^{2}w 
+ \boldsymbol{b}^{\tilde{\alpha}}\cdot \nabla w 
+ c^{\tilde{\alpha}}w - f^{\tilde{\alpha}}] = 0 \quad \text{ in } \Omega,
\end{align}
due to (\ref{hjb_uncountable_countable_equivalent}).

We define a mapping $M_{\lambda}: L^{p}(\Omega)\rightarrow 
W^{2,p}(\Omega)\cap W_{0}^{1,p}(\Omega)$ by 
\begin{align*}
(\Delta - \lambda )(M_{\lambda} g) = g \text{ in } \Omega,
\end{align*}
for any $g \in L^{p}(\Omega)$.
According to Theorem~\ref{thm_A_continuous_complete}, 
we have that for any $g\in L^{p}(\Omega)$,
\begin{align}
\label{coercivity_poisson_lambda_ineq}
\Vert M_{\lambda}g\Vert_{W_{\lambda}^{2,p}(\Omega)}
 \leq \underline{C}_{p,\lambda} \Vert g\Vert_{L^{p}(\Omega)},
\end{align}
where the constant $\underline{C}_{p,\lambda}$ may depend on $p$ and $\lambda$. 
In fact, $\underline{C}_{2,\lambda} = 1$ for any $\lambda \geq 1$ with convex 
domain due to \cite[Theorem~$2$]{SmearsSuli2014}.

By (\ref{hjb_pde_equivalent}) and the definition of $M_{\lambda}$, $u 
\in W^{2,p}(\Omega)\cap W_{0}^{1,p}(\Omega)$ satisfies the 
Hamilton-Jacobi-Bellman equation (\ref{hjb_eqs_original}) if and only if
\begin{align*}
- u = M_{\lambda}\big( \sup_{\tilde{\alpha} \in \tilde{\Lambda}} 
\gamma^{\lambda,\tilde{\alpha}}[A^{\tilde{\alpha}}:D^{2}u 
+ \boldsymbol{b}^{\tilde{\alpha}}\cdot \nabla u 
+ c^{\tilde{\alpha}}u - f^{\tilde{\alpha}}] 
- (\Delta u - \lambda u)\big).
\end{align*} 
On the other hand, (\ref{hjb_sol_bound}) and 
(\ref{coercivity_poisson_lambda_ineq}) imply that
\begin{align}
\label{hjb_pde_contraction}
w \rightarrow T_{\lambda}(w):= 
M_{\lambda}\big( \sup_{\tilde{\alpha} \in 
\Lambda} \gamma^{\lambda,\tilde{\alpha}} [A^{\tilde{\alpha}}:D^{2}w 
+ \boldsymbol{b}^{\tilde{\alpha}}\cdot \nabla w 
+ c^{\tilde{\alpha}}w - f^{\tilde{\alpha}}] - (\Delta - \lambda)w \big)
\end{align}
is a well defined mapping from $W^{2,p}(\Omega)\cap W_{0}^{1,p}(\Omega)$ into 
itself. 
Therefore, in order to to the existence and uniqueness of the solution 
of (\ref{hjb_eqs_original}), it is sufficient to prove that 
the mapping (\ref{hjb_pde_contraction}) is a contraction with respect to 
the norm $\Vert \cdot\Vert_{W_{\lambda}^{2,p}(\Omega)}$.

By (\ref{Cordes_coefficients_strong_hjb_pde_general}) and the proof 
of \cite[Lemma~$1$]{SmearsSuli2014} ($\underline{\kappa}$ 
will be determined later), we have that 
for any $w_{1},w_{2}\in W^{2,p}(\Omega)\cap W_{0}^{1,p}(\Omega)$,
\begin{align}
\label{hjb_operator_diff_ineq}
& \Vert \sup_{\tilde{\alpha} \in 
\Lambda} \gamma^{\lambda,\tilde{\alpha}} [A^{\tilde{\alpha}}:D^{2}w_{1} 
+ \boldsymbol{b}^{\tilde{\alpha}}\cdot \nabla w_{1} 
+ c^{\tilde{\alpha}}w_{1} - f^{\tilde{\alpha}}] \\
\nonumber
& \qquad - \sup_{\tilde{\alpha} \in 
\Lambda} \gamma^{\lambda,\tilde{\alpha}} [A^{\tilde{\alpha}}:D^{2}w_{2} 
+ \boldsymbol{b}^{\tilde{\alpha}}\cdot \nabla w_{2} 
+ c^{\tilde{\alpha}}w_{2} - f^{\tilde{\alpha}}] 
- (\Delta - \lambda)(w_{1} - w_{2})\Vert_{L^{p}(\Omega)} \\
\nonumber
\leq & \underline{C}_{p,\lambda}^{\prime}\sqrt{1 - \epsilon}\Vert 
(w_{1}-w_{2})\Vert_{W_{\lambda}^{2,p}(\Omega)}.
\end{align}
In fact, $\underline{C}_{2,\lambda}^{\prime} = 1$ on convex domain.

By (\ref{coercivity_poisson_lambda_ineq},\ref{hjb_operator_diff_ineq}), we have 
\begin{align}
\label{hjb_pde_contraction_bound1}
& \Vert M_{\lambda}\big(\sup_{\tilde{\alpha} \in 
\Lambda} \gamma^{\lambda,\tilde{\alpha}} [A^{\tilde{\alpha}}:D^{2}w_{1} 
+ \boldsymbol{b}^{\tilde{\alpha}}\cdot \nabla w_{1} 
+ c^{\tilde{\alpha}}w_{1} - f^{\tilde{\alpha}}]\big) \\
\nonumber
& \qquad - M_{\lambda}\big(\sup_{\tilde{\alpha} \in 
\Lambda} \gamma^{\lambda,\tilde{\alpha}} [A^{\tilde{\alpha}}:D^{2}w_{2} 
+ \boldsymbol{b}^{\tilde{\alpha}}\cdot \nabla w_{2} 
+ c^{\tilde{\alpha}}w_{2} - f^{\tilde{\alpha}}]\big) 
- (w_{1} - w_{2})\Vert_{L^{p}(\Omega)} \\
\nonumber
\leq & \underline{C}_{p,\lambda}\underline{C}_{p,\lambda}^{\prime}
\sqrt{1 - \epsilon}\Vert (w_{1}-w_{2})\Vert_{W_{\lambda}^{2,p}(\Omega)}.
\end{align}

We can choose 
\begin{align}
\label{def_underline_kappa}
\underline{\kappa}= 1 - \frac{1}{(\underline{C}_{2,\lambda}
\underline{C}_{2,\lambda}^{\prime})^{2}}.
\end{align} 
By (\ref{coercivity_poisson_lambda_ineq}) and (\ref{hjb_operator_diff_ineq}), 
if (\ref{Cordes_coefficients_strong_hjb_pde_general}) is satisfied 
with $\epsilon \in (\underline{\kappa},1)$, $T_{\lambda}$ is a contraction. 
Then there is a unique solution $u\in W^{2,p}(\Omega)\cap W_{0}^{1,p}
(\Omega)$ satisfying the Hamilton-Jacobi-Bellman equation (\ref{hjb_eqs_original}) 
which is equivalent to (\ref{hjb_pde_equivalent}). 
Since $\underline{C}_{2,\lambda} = \underline{C}_{2,\lambda}^{\prime} = 1$, 
$\underline{\kappa} = 0$ if $p = 2$ with convex domain.

By (\ref{hjb_pde_contraction_bound1}) 
with $w_{1}=u$  and $w_{2}=0$, we have 
\begin{align*}
\Vert u - M_{\lambda}(\sup_{\tilde{\alpha}\in \tilde{\Lambda}}
\gamma^{\tilde{\alpha}}f^{\tilde{\alpha}}) \Vert_{W_{\lambda}^{2,p}(\Omega)} 
\leq \underline{C}_{p,\lambda}\underline{C}_{p,\lambda}^{\prime}
\sqrt{1 - \epsilon}\Vert u\Vert_{W_{\lambda}^{2,p}(\Omega)}.
\end{align*}
Since $\underline{C}_{p,\lambda}
\underline{C}_{p,\lambda}^{\prime}\sqrt{1-\epsilon}<1$, we have 
\begin{align*}
\Vert u\Vert_{W^{2,p}(\Omega)} \leq C \Vert \sup_{\tilde{\alpha}
\in \tilde{\Lambda}}\vert f^{\tilde{\alpha}}\vert\Vert_{L^{p}(\Omega)}.
\end{align*}
Therefore, the proof is complete.
\end{proof}

\subsection{Optimal convergence of numerical solution of FEM for Hamilton-Jacobi-Bellman equation}

When the domain is convex and $p=2$,  the condition~(\ref{Cordes_coefficients_strong_hjb_fem_general}) 
for coefficients in Theorem~\ref{thm_hjb_conv} is more restrictive than the Cord{\`{e}}s condition 
used in \cite{SmearsSuli2014}. The stronger restriction of the condition 
(\ref{Cordes_coefficients_strong_hjb_fem_general}) is due to two reasons. The first reason 
is that the constant of the discrete Miranda–Talenti inequality of the discrete Laplacian operator 
$L_{I,h}$ used in the analysis of the FEM (\ref{hjb_fem}) 
is not known explicitly and probably bigger than $1$. The second reason is that we don't want to include 
the parameter $\lambda$ used in \cite{SmearsSuli2014}, which may not be known beforehand, in the design of the 
weight function $\gamma^{\alpha}$ (see the difference between (\ref{def_gamma_alpha}) and the weight function 
used in \cite{SmearsSuli2014}). Though it is more restrictive, Remark~\ref{remark_new_corders_nonempty} 
shows that the condition~(\ref{Cordes_coefficients_strong_hjb_fem_general}) allows non-trivial examples.  
On the other hand, Theorem~\ref{thm_hjb_conv} is valid for a wide range of $p$ (see (\ref{p_range2})) 
and is valid on two dimensional non-convex polygon for a neighbourhood of $\frac{4}{3}$.

\begin{theorem}
\label{thm_hjb_conv}
Let $\Omega$ be a Lipschitz polyhedral in $\mathbb{R}^{d}$ ($d=2,3$). 
We assume that $\sup_{\tilde{\alpha}\in \tilde{\Lambda}}\vert f^{\tilde{\alpha}}\vert 
\in L^{p}(\Omega)$, and (\ref{hjb_uniform_ellipticity},\ref{hjb_coeffs_bounds}) 
and the assumption~(\ref{hjb_coeffs_alternatives}) hold.
There is a constant $0 \leq \kappa<1$ 
which my depend on $p$, such that if 
\begin{align}
\label{Cordes_coefficients_strong_hjb_fem_general}
& \dfrac{\vert A^{\alpha}(\boldx)\vert^{2}(\text{Tr} A^{\alpha}
(\boldx))^{-2}}{1 + 2\lambda^{-1}\vert c^{\alpha}(\boldx)\vert
(\text{Tr}A^{\alpha}(\boldx))^{-1}
- \vert A^{\alpha}(\boldx)\vert^{-2}
\big( \lambda^{-2} \vert c^{\alpha}(\boldx)\vert^{2} 
+ (2\lambda)^{-1}\vert \boldsymbol{b}^{\alpha}(\boldx)\vert^{2}\big)} \\
\nonumber 
\leq & \dfrac{1}{d + \epsilon}, \quad
\forall \boldx \in \Omega \text{ almost everywhere},
\forall \alpha \in \Lambda,
\end{align}
for some constants $\epsilon \in [\kappa,1)$ and $\lambda > 0$, then 
there is a unique numerical 
solution $u_{h}\in V_{h}$ of the finite element method (\ref{hjb_fem}) for any 
$0< h < h_{3}$.
In addition, there is a positive constant $C$ which may depend on $p$, such that 
\begin{align*}
\Vert u_{h}\Vert_{W^{1,p}(\Omega)}+ \Vert u_{h}\Vert_{W^{2,p}_{h}(\Omega)} 
\leq C \Vert \sup_{\tilde{\alpha}\in \tilde{\Lambda}}\vert f^{\tilde{\alpha}}\vert 
\Vert_{L^{p}(\Omega)}.
\end{align*}
Here $\tilde{\Lambda}\subset \Lambda$ is introduced in the 
assumption~(\ref{hjb_coeffs_alternatives}). Furthermore, if there are 
$\epsilon \in [0,1)$ and $\lambda>0$ such that (\ref{Cordes_coefficients_strong_hjb_fem_general}) holds, then 
(\ref{Cordes_coefficients_strong_hjb_pde_general}) in Theorem~\ref{thm_hjb_pde_wellposedness} holds as well for the same 
$\epsilon$ and $\lambda$. We also have the convergent result that 
for any $\chi_{h}\in V_{h}$, 
\begin{align}
\label{hjb_fem_con_ineq}
& \Vert u_{h} - u\Vert_{W^{1,p}(\Omega)} + \Vert u_{h} - u\Vert_{W_{h}^{2,p}(\Omega)} \\ 
\nonumber
\leq & C \big(\Vert u - \chi_{h}\Vert_{W^{1,p}(\Omega)} + \Vert u - \chi_{h}\Vert_{W_{h}^{2,p}(\Omega)} 
+ \Vert \overline{P}_{h}(D^{2}u) - D^{2}u\Vert_{L^{p}(\Omega)} \big),
\end{align}
if $0 < h < h_{3}$. Here $\overline{P}_{h}$ is the standard $L^{2}$-orthogonal projection 
onto $[\overline{V}_{h}]^{d\times d}$. 
$p$ is valid in the range described in (\ref{p_range2}).
\end{theorem}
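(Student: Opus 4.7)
The plan is to mirror the fixed-point strategy used in the proof of Theorem~\ref{thm_hjb_pde_wellposedness}, but at the discrete level on $V_h$, while simultaneously showing that the discrete Cord\`{e}s-type condition entails the continuous one. First I would verify algebraically that (\ref{Cordes_coefficients_strong_hjb_fem_general}) implies (\ref{Cordes_coefficients_strong_hjb_pde_general}) for the same $\epsilon$ and $\lambda$: rewriting the two expressions with common denominator $\big(\mathrm{Tr}A^{\alpha} + \lambda^{-1}|c^{\alpha}|\big)^{2}$ and comparing term by term shows the PDE quotient is bounded above by the FEM quotient, so Theorem~\ref{thm_hjb_pde_wellposedness} immediately delivers the unique strong solution $u\in W^{2,p}(\Omega)\cap W_{0}^{1,p}(\Omega)$.

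Next I would set up the discrete analogues of $M_\lambda$ and $T_\lambda$ from the proof of Theorem~\ref{thm_hjb_pde_wellposedness}. Let $L_{I,h}: V_h\to V_h$ be defined by
\begin{align*}
(L_{I,h}w_h,v_h)_\Omega=(I_d:\overline{\nabla}_h(\nabla w_h)-\lambda w_h, v_h)_\Omega,\qquad \forall v_h\in V_h,
\end{align*}
which corresponds to $\Delta-\lambda$. By Theorem~\ref{thm_global_estimate} applied with $\tilde{A}=I_d$, $\tilde{\boldsymbol{b}}=\boldsymbol{0}$, $\tilde{c}=-\lambda$, $L_{I,h}$ is invertible for small $h$ and $M_\lambda^h:=L_{I,h}^{-1}$ satisfies a discrete $W_h^{2,p}$-stability estimate. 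Define
\begin{align*}
T_\lambda^h(w_h):= M_\lambda^h\bigl(P_h\!\sup_{\tilde{\alpha}\in\tilde{\Lambda}}\gamma^{\tilde{\alpha}}[A^{\tilde{\alpha}}\!:\overline{\nabla}_h(\nabla w_h)+\boldsymbol{b}^{\tilde{\alpha}}\!\cdot\!\nabla w_h+c^{\tilde{\alpha}}w_h-f^{\tilde{\alpha}}]-L_{I,h}w_h\bigr),
\end{align*}
where $P_h$ is the $L^2$-orthogonal projection onto $V_h$, so that fixed points of $T_\lambda^h$ in $V_h$ are exactly solutions of (\ref{hjb_fem}). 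Well-posedness of this mapping follows from Lemma~\ref{lemma_hjb_measurable} and the $L^p$-boundedness provided by Lemma~\ref{lemma_L2_proj_props}.

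The heart of the argument is to prove that $T_\lambda^h$ is a contraction on $V_h$ with respect to a discrete weighted norm equivalent to $\Vert\cdot\Vert_{W_h^{2,p}(\Omega)}$. The key pointwise inequality, adapted from Lemma~$1$ of \cite{SmearsSuli2014} but formulated for $\overline{\nabla}_h$ in place of $D^2$, asserts that under (\ref{Cordes_coefficients_strong_hjb_fem_general}) the quantity
\begin{align*}
\sup_{\tilde{\alpha}}\gamma^{\tilde{\alpha}}[A^{\tilde{\alpha}}\!:\overline{\nabla}_h(\nabla(w_1{-}w_2))+\boldsymbol{b}^{\tilde{\alpha}}\!\cdot\!\nabla(w_1{-}w_2)+c^{\tilde{\alpha}}(w_1{-}w_2)]-L_{I,h}(w_1{-}w_2)
\end{align*}
is bounded in $L^p$ by $\sqrt{1-\epsilon}$ times a discrete norm of $w_1-w_2$. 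Composing with the discrete Miranda--Talenti-type constant of $L_{I,h}^{-1}$ (the constant $\underline{C}_{p,\lambda,h}^{\mathrm{disc}}$ hidden in Theorem~\ref{thm_global_estimate}) gives a contraction factor $\underline{C}_{p,\lambda,h}^{\mathrm{disc}}\sqrt{1-\epsilon}$. Choosing $\kappa:=1-(\underline{C}_{p,\lambda,h}^{\mathrm{disc}})^{-2}$ forces this factor below $1$ whenever $\epsilon\in[\kappa,1)$. Banach's fixed point theorem then yields a unique $u_h\in V_h$ together with the stability bound, testing the fixed-point identity with $w_2=0$ and using (\ref{hjb_operator_bound1}).

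The main obstacle is the discrete algebraic inequality above: unlike the continuous $p=2$ convex case where the Miranda--Talenti constant is $1$, no such sharp bound is available for $\overline{\nabla}_h$, so one must carry the constant through the estimate and enforce the stricter algebraic gap $\epsilon\ge\kappa>0$. This is exactly the reason the discrete Cord\`{e}s condition (\ref{Cordes_coefficients_strong_hjb_fem_general}) is more restrictive than its PDE counterpart. Finally, the error estimate (\ref{hjb_fem_con_ineq}) follows in a standard way: for any $\chi_h\in V_h$, apply Theorem~\ref{thm_global_estimate} to the linearization along $u_h-\chi_h$ with frozen coefficients chosen through Lemma~\ref{lemma_hjb_measurable}, rewrite the residual using $\overline{\nabla}_h(\nabla \chi_h)-D^2 u=\overline{\nabla}_h(\nabla(\chi_h-u))+(\overline{P}_h(D^2u)-D^2u)+(\overline{\nabla}_h(\nabla u)-\overline{P}_h(D^2u))$, and observe that the last parenthesis vanishes against test functions in $V_h$ by Definition~\ref{def_discrete_deri}, leaving the two terms on the right-hand side of (\ref{hjb_fem_con_ineq}).
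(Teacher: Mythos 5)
Your overall strategy is the same as the paper's: reduce to the countable set $\tilde{\Lambda}$ via Lemma~\ref{lemma_hjb_measurable} and Lemma~\ref{lemma_hjb_equivalent}, build the discrete solution operator $M_{\lambda,h}$ whose $W_h^{2,p}$-stability comes from Theorem~\ref{thm_global_estimate} (with $A=I_d$, so $\gamma\equiv 1$), prove the fixed-point map on $V_h$ is a contraction in a $\lambda$-weighted discrete norm by adapting the pointwise estimate of Lemma~1 in \cite{SmearsSuli2014} to $\overline{\nabla}_h$ (the algebra of that adaptation, which you take on faith, is precisely what produces the shape of (\ref{Cordes_coefficients_strong_hjb_fem_general})), choose $\kappa=1-C^{-2}$ with $C$ the product of the relevant constants, and conclude existence, uniqueness and the stability bound; the implication (\ref{Cordes_coefficients_strong_hjb_fem_general})$\Rightarrow$(\ref{Cordes_coefficients_strong_hjb_pde_general}) is also handled as in the paper. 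Two small remarks: $\kappa$ must be independent of $h$, so the constant in your definition should be the $h$-uniform one from Theorem~\ref{thm_global_estimate} (valid for $h<h_2$) combined with the $L^{p}$-stability of $P_h$ and the bound of $\Vert\overline{\nabla}_h(\nabla\cdot)\Vert_{L^p(\Omega)}$ by $\Vert\cdot\Vert_{W_h^{2,p}(\Omega)}$, exactly the product $C_{p,\lambda}C_{p,\lambda}^{\prime}C_p^{\prime}C_p^{\prime\prime}$ the paper carries.

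The one step that would fail as written is your derivation of (\ref{hjb_fem_con_ineq}): you propose to ``apply Theorem~\ref{thm_global_estimate} to the linearization along $u_h-\chi_h$ with frozen coefficients chosen through Lemma~\ref{lemma_hjb_measurable}''. Theorem~\ref{thm_global_estimate} requires $A\in[C^{0}(\overline{\Omega})]^{d\times d}$, whereas a frozen-index linearization of the HJB operator has merely bounded measurable coefficients, for which no such $W^{2,p}$-type stability is available (this is exactly why the Cord{\`e}s-type condition is needed in this regime). The paper instead reuses the contraction inequality it has just established, with $w_{1,h}=u_h$ and $w_{2,h}=\chi_h$, to get $\Vert u_h-\chi_h\Vert_{W_h^{2,p}(\Omega)}\leq C\Vert F_h(\chi_h)\Vert_{L^p(\Omega)}$, and then bounds the residual by inserting the exact equation $\sup_{\tilde{\alpha}}\gamma^{\tilde{\alpha}}[\cdots]=0$ and using the Lipschitz property of the supremum; since you already have the contraction for arbitrary pairs in $V_h$, this repair is immediately available to you, but the step as you state it is not justified. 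Relatedly, the term $\overline{\nabla}_h(\nabla u)-\overline{P}_h(D^{2}u)$ does not merely ``vanish against test functions in $V_h$'': it vanishes identically, because $[\nabla u]=0$ across interior faces so (\ref{discrete_deri_equivalent}) gives $\overline{\nabla}_h(\nabla u)=\overline{P}_h(D^{2}u)$ in $[\overline{V}_h]^{d\times d}$; orthogonality against $V_h$ alone would not allow you to drop it from the $L^{p}$ residual norm.
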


\begin{remark}
\label{remark_hjb_conv}
In special case $\boldsymbol{b}^{\alpha} = \boldsymbol{0}$ and $c^{\alpha} = 0$ 
for any $\alpha \in \Lambda$, Theorem~\ref{thm_hjb_conv} still holds (no $\lambda$ any more)
if the condition (\ref{Cordes_coefficients_strong_hjb_pde_general}) is replaced by 
\begin{align}
\label{Cordes_coefficients_strong_hjb_fem_special}
& \dfrac{\vert A^{\alpha}(\boldx)\vert^{2}}{\left( 
\text{Tr}A^{\alpha}(\boldx) \right)^{2}} 
\leq \dfrac{1}{d -1 + \epsilon}, \qquad
\forall \boldx \in \Omega \text{ almost everywhere},
\forall \alpha \in \Lambda,
\end{align}
for some $\epsilon \in [\kappa^{\prime},1)$ 
where $\kappa^{\prime} \in [0,1)$. 
The proof is much simpler. We would like to emphasize that 
even for $p=2$, $\kappa^{\prime}$ is not known explicitly 
and probably bigger than $0$, since the constant of the discrete 
Miranda–Talenti inequality of the discrete Lapalcian operator $L_{I,h}$ 
associated with the FEM (\ref{hjb_fem}) is not known explicitly and probably bigger than $1$.
\end{remark}

\begin{remark}
\label{remark_new_corders_nonempty}
(The condition (\ref{Cordes_coefficients_strong_hjb_fem_general}) includes non-trivial example) 
We investigate that under which restrictions, the condition (\ref{Cordes_coefficients_strong_hjb_fem_general}) 
can be valid for \cite[Example~$1$]{SmearsSuli2014}.  

We recall that \cite[Example~$1$]{SmearsSuli2014} is defined in two dimensional domain, such that 
\begin{align*}
\vert A^{\alpha}\vert^{2} = \dfrac{1 + (\sin \theta)^{2}}{2}, \quad 
\text{Tr}A^{\alpha} = 1, \quad \boldsymbol{b}^{\alpha} = \boldsymbol{0}, 
\quad c^{\alpha} = -c_{0} \text{ with } c_{0} > 0, 
\end{align*}
for any $\alpha \in \Lambda$ with $\theta \in [0,\frac{\pi}{3}]$.
Then the condition (\ref{Cordes_coefficients_strong_hjb_fem_general}) is equivalent to 
\begin{align}
\label{remark_new_corders_nonempty_ineq}
\dfrac{\frac{1}{2}(1 + (\sin \theta)^{2})}{1 + 2(\frac{c_{0}}{\lambda}) 
- \frac{2}{1+(\sin \theta)^{2}}(\frac{c_{0}}{\lambda})^{2}} \leq \frac{1}{2+\epsilon}.
\end{align}

It is easy to verify that (\ref{remark_new_corders_nonempty_ineq}) holds for any $\epsilon \in (0,1)$ 
if $\theta = 0$ and $\lambda = 2c_{0}$, and holds also for any $\epsilon \in (0, \frac{1}{7}]$ 
if $\theta = \frac{\pi}{3}$ and $\lambda = \frac{8}{7}c_{0}$. 

We notice that $\epsilon \in (\kappa ,1)$ where $\kappa \in [0,1)$ is not known explicitly. 
Then by the continuity argument, there exists $\beta \in (0, \frac{\pi}{3}]$ such that 
(\ref{remark_new_corders_nonempty_ineq}) with $\epsilon = \kappa$ holds for some $\lambda 
\in (\frac{8}{7}c_{0}, 2c_{0})$ and for any $\theta \in [0, \beta]$.
Therefore, we can conclude that conservatively speaking, the condition 
(\ref{Cordes_coefficients_strong_hjb_fem_general}) can be satisfied by \cite[Example~$1$]{SmearsSuli2014} 
with probably more restricted region 
of the parameter $\theta$ (a non-trivial closed subinterval contained in $[0,\frac{\pi}{3}]$). 
\end{remark}

\begin{proof}
Let $p$ be any number in the range described in this Theorem.

Since (\ref{hjb_uniform_ellipticity},\ref{hjb_coeffs_bounds}) 
and the assumption~(\ref{hjb_coeffs_alternatives}) hold, 
it is obvious to mimic the proof of Theorem~\ref{thm_hjb_pde_wellposedness} 
(the argument to obtain (\ref{hjb_pde_equivalent})) to 
have that the finite element method (\ref{hjb_fem}) is equivalent to find 
$u_{h}\in V_{h}$ satisfying 
\begin{align}
\label{hjb_fem_equivalent}
(\sup_{\tilde{\alpha}\in \tilde{\Lambda}}\gamma^{\tilde{\alpha}}
[A^{\tilde{\alpha}}:\overline{\nabla}_{h}
(\nabla u_{h}) + \boldsymbol{b}^{\tilde{\alpha}}\cdot\nabla u_{h} 
+ c^{\tilde{\alpha}}u_{h} - f^{\tilde{\alpha}}], v_{h})_{\Omega} = 0, 
\qquad \forall v_{h} \in V_{h}.
\end{align}
We denote by 
\begin{align*}
\Vert v_{h}\Vert_{W_{h,\lambda}^{2,p}(\Omega)}= 
\big( \Vert v_{h}\Vert_{W_{h}^{2,p}(\Omega)}^{p} 
+ 2\lambda\Vert \nabla v_{h}\Vert_{L^{p}(\Omega)}^{p} 
+ \lambda^{2}\Vert v_{h}\Vert_{L^{p}(\Omega)}^{p} \big)^{1/p}, 
\qquad \forall v_{h} \in V_{h}.
\end{align*}

For any $w_{h} \in V_{h}$, we define $F_{h}(w_{h}) \in V_{h}$ by 
\begin{align*}
&(F_{h}(w_{h}), v_{h})_{\Omega} \\ 
= &(\sup_{\tilde{\alpha}\in \tilde{\Lambda}}\gamma^{\tilde{\alpha}}
[A^{\tilde{\alpha}}:\overline{\nabla}_{h}
(\nabla u_{h}) + \boldsymbol{b}^{\tilde{\alpha}}\cdot\nabla u_{h} 
+ c^{\tilde{\alpha}}u_{h} - f^{\tilde{\alpha}}], v_{h})_{\Omega}, 
\quad \forall v_{h} \in V_{h}.
\end{align*}
We define a mapping $M_{\lambda,h}:V_{h} 
\rightarrow V_{h}$ such that for any $g_{h} \in V_{h}$, 
\begin{align*}
-(\nabla (M_{\lambda,h}(g_{h})), \nabla v_{h})_{\Omega} 
- \lambda (M_{\lambda,h}(g_{h}),v_{h})_{\Omega} = 
(g_{h}, v_{h})_{\Omega},\qquad \forall v_{h} \in V_{h}.
\end{align*} 
It is easy to see that for any $w_{h}\in V_{h}$,
\begin{align*}
M_{\lambda,h}(\mathcal{L}_{I_{d},h}w_{h}-\lambda w_{h}) = w_{h},
\end{align*}
where $\mathcal{L}_{I_{d},h}w_{h} \in V_{h}$ satisfying  
$(\mathcal{L}_{I_{d},h}w_{h}, v_{h})_{\Omega}
=(I_{d}:\overline{\nabla}_{h}(\nabla w_{h}),v_{h})_{\Omega}$ 
for any $v_{h} \in V_{h}$.
Then by Theorem~\ref{thm_global_estimate}, we have that 
\begin{align}
\label{coercivity_poisson_fem_lambda_ineq}
\Vert M_{\lambda,h}(g_{h}) \Vert_{W_{h,\lambda}^{2,p}(\Omega)} 
\leq C_{p,\lambda} \Vert g_{h}\Vert_{L^{p}(\Omega)}, \qquad \forall g_{h}\in V_{h},
\forall 0 < h < h_{3},
\end{align}
where the constant $C_{p,\lambda}$ may depend on $p$ and $\lambda$.

We notice that $u_{h}\in V_{h}$ is a numerical solution of the finite element 
method (\ref{hjb_fem_equivalent}) if and only if $F_{h}(u_{h}) = 0$ in $\Omega$, 
which is equivalent to 
\begin{align*}
-u_{h} = M_{\lambda,h}\big(F_{h}(u_{h}) - (\mathcal{L}_{I_{d},h}u_{h}
-\lambda u_{h}) \big).
\end{align*}
Therefore, to prove the existence and uniqueness of (\ref{hjb_fem_equivalent}), 
it is sufficient to prove the mapping 
\begin{align}
\label{hjb_fem_contraction} 
w_{h} \rightarrow M_{\lambda,h}\big( F_{h}(w_{h}) - (\mathcal{L}_{I_{d},h}w_{h}
-\lambda w_{h}) \big)
\end{align}
is a contraction from $V_{h}$ into itself with respect to the 
$\Vert \cdot \Vert_{W_{h,\lambda}^{2,p}(\Omega)}$ norm.

We choose $w_{1,h},w_{2,h} \in V_{h}$ arbitrarily. 
We notice that for any $v_{h}\in V_{h}$
\begin{align*}
& F_{h}(v_{h}) - (\mathcal{L}_{I_{d},h}v_{h}-\lambda v_{h}) \\
= & P_{h}\big( \sup_{\tilde{\alpha}\in \tilde{\Lambda}}\gamma^{\tilde{\alpha}}
[A^{\tilde{\alpha}}:\overline{\nabla}_{h}
(\nabla v_{h}) + \boldsymbol{b}^{\tilde{\alpha}}\cdot\nabla v_{h} 
+ c^{\tilde{\alpha}}v_{h} - f^{\tilde{\alpha}}] 
- (\overline{\nabla}_{h}(\nabla v_{h})-\lambda v_{h})\big).
\end{align*}
Here $P_{h}$ is the standard $L^{2}$-orthogonal projection onto $V_{h}$. 
Therefore, we have 
\begin{align}
\label{hjb_fem_opreator_projection}
& \Vert \big(F_{h}(w_{1,h})-(\mathcal{L}_{I_{d},h}w_{1,h}-\lambda 
w_{1,h})\big) \\
\nonumber
&\qquad - \big(F_{h}(w_{2,h})-(\mathcal{L}_{I_{d},h}w_{2,h}-\lambda 
w_{2,h})\big) \Vert_{L^{p}(\Omega)} \\
\nonumber
\leq & C_{p}^{\prime} \Vert \sup_{\tilde{\alpha}\in \tilde{\Lambda}}\gamma^{\tilde{\alpha}}
[A^{\tilde{\alpha}}:\overline{\nabla}_{h}
(\nabla w_{1,h}) + \boldsymbol{b}^{\tilde{\alpha}}\cdot\nabla w_{1,h} 
+ c^{\tilde{\alpha}}w_{1,h} - f^{\tilde{\alpha}}] \\
\nonumber 
& \qquad - \sup_{\tilde{\alpha}\in \tilde{\Lambda}}\gamma^{\tilde{\alpha}}
[A^{\tilde{\alpha}}:\overline{\nabla}_{h}
(\nabla w_{2,h}) + \boldsymbol{b}^{\tilde{\alpha}}\cdot\nabla w_{2,h} 
+ c^{\tilde{\alpha}}w_{2,h} - f^{\tilde{\alpha}}] \\
\nonumber 
& \qquad -(\overline{\nabla}_{h}(\nabla (w_{1,h}-w_{2,h}))-\lambda(w_{1,h}-w_{2,h}))
\Vert_{L^{p}(\Omega)}
\end{align}
According to the proof of \cite[Lemma~$1$]{SmearsSuli2014}, we have that 
for any $\boldx \in \Omega$ almost everywhere, 
\begin{align}
\label{hjb_fem_operator_diff}
& \vert \sup_{\tilde{\alpha}\in \tilde{\Lambda}}\gamma^{\tilde{\alpha}}
[A^{\tilde{\alpha}}:\overline{\nabla}_{h}
(\nabla w_{1,h}) + \boldsymbol{b}^{\tilde{\alpha}}\cdot\nabla w_{1,h} 
+ c^{\tilde{\alpha}}w_{1,h} - f^{\tilde{\alpha}}](\boldx) \\
\nonumber 
& \qquad - \sup_{\tilde{\alpha}\in \tilde{\Lambda}}\gamma^{\tilde{\alpha}}
[A^{\tilde{\alpha}}:\overline{\nabla}_{h}
(\nabla w_{2,h}) + \boldsymbol{b}^{\tilde{\alpha}}\cdot\nabla w_{2,h} 
+ c^{\tilde{\alpha}}w_{2,h} - f^{\tilde{\alpha}}](\boldx) \\
\nonumber 
& \qquad -(\overline{\nabla}_{h}(\nabla (w_{1,h}-w_{2,h}))-\lambda(w_{1,h}-w_{2,h}))
(\boldx)\vert \\
\nonumber 
\leq & \big(\sup_{\tilde{\alpha}\in \tilde{\Lambda}}\sqrt{E^{\tilde{\alpha}}(\boldx)}
\big)\sqrt{\vert \overline{\nabla}_{h}(\nabla w_{h}(\boldx))\vert^{2} 
+2\lambda \vert \nabla w_{h}(\boldx)\vert^{2} 
+ \lambda^{2}\vert w_{h}(\boldx)\vert^{2}},
\end{align}
where $w_{h}=w_{1,h} - w_{2,h}$ and 
\begin{align*}
E^{\tilde{\alpha}}(\boldx)
= & \vert \gamma^{\tilde{\alpha}}(\boldx)
A^{\tilde{\alpha}}(\boldx) 
- I_{d} \vert^{2} + \vert \gamma^{\tilde{\alpha}}(\boldx) \vert^{2} 
\frac{\vert \boldsymbol{b}^{\tilde{\alpha}}(\boldx) \vert^{2}}{2\lambda} 
+ \dfrac{\vert \lambda + c^{\tilde{\alpha}}(\boldx) \vert^{2}}{\lambda^{2}}\\
= & \frac{\vert \gamma^{\tilde{\alpha}}(\boldx) \vert^{2}
\vert c^{\tilde{\alpha}}(\boldx)\vert^{2}}{\lambda^{2}}
+\gamma^{\tilde{\alpha}}(\boldx)\big( \frac{
\gamma^{\tilde{\alpha}}(\boldx)\vert \boldsymbol{b}^{\tilde{\alpha}}(\boldx)
\vert}{2\lambda} - \frac{2\vert c^{\tilde{\alpha}}(\boldx)\vert}{\lambda}\big)\\
\nonumber  
& \qquad + \big(d+1 +\vert \gamma^{\tilde{\alpha}}(\boldx) \vert^{2}
\vert A^{\tilde{\alpha}}(\boldx)\vert^{2} - 2 \text{Tr} 
A^{\tilde{\alpha}}(\boldx)\big).
\end{align*}
The last equation above is due to the fact $c^{\tilde{\alpha}}(\boldx)\leq 0$ 
for any $\tilde{\alpha}\in \tilde{\Lambda}$.
By the definition of $\gamma^{\tilde{\alpha}}$ (see (\ref{def_gamma_alpha})), 
\begin{align*}
E^{\tilde{\alpha}}(\boldx) = & \frac{(\text{Tr}A^{\tilde{\alpha}}(\boldx))^{2}
\vert c^{\tilde{\alpha}}(\boldx)\vert^{2}}{\vert A^{\tilde{\alpha}}
(\boldx) \vert^{4}}\lambda^{-2} 
+ \frac{\text{Tr}A^{\tilde{\alpha}}(\boldx)}{\vert A^{\tilde{\alpha}}
(\boldx) \vert^{2}}\big( \frac{\text{Tr}A^{\tilde{\alpha}}(\boldx)}{\vert 
A^{\tilde{\alpha}}(\boldx) \vert^{2}} \frac{\vert \boldsymbol{b}^{\tilde{\alpha}} 
(\boldx)\vert^{2}}{2} - 2 \vert c^{\tilde{\alpha}}(\boldx)\vert\big)\lambda^{-1}\\
& \qquad + (d+1 - \frac{(\text{Tr}A^{\tilde{\alpha}}
(\boldx))^{2}}{\vert A^{\tilde{\alpha}}(\boldx) \vert^{2}}).
\end{align*}
Then for any $\epsilon \in (0,1]$, $E^{\tilde{\alpha}}(\boldx)\leq 1 - \epsilon$ 
is equivalent to 
\begin{align*}
& -\frac{(\text{Tr}A^{\tilde{\alpha}}(\boldx))^{2}
\vert c^{\tilde{\alpha}}(\boldx)\vert^{2}}{\vert A^{\tilde{\alpha}}
(\boldx) \vert^{4}}\lambda^{-2} 
- \frac{\text{Tr}A^{\tilde{\alpha}}(\boldx)}{\vert A^{\tilde{\alpha}}
(\boldx) \vert^{2}}\big( \frac{\text{Tr}A^{\tilde{\alpha}}(\boldx)}{\vert 
A^{\tilde{\alpha}}(\boldx) \vert^{2}} \frac{\vert \boldsymbol{b}^{\tilde{\alpha}} 
(\boldx)\vert^{2}}{2} - 2 \vert c^{\tilde{\alpha}}(\boldx)\vert\big)\lambda^{-1}\\
& \qquad + \frac{(\text{Tr}A^{\tilde{\alpha}}
(\boldx))^{2}}{\vert A^{\tilde{\alpha}}(\boldx) \vert^{2}} \geq d+\epsilon,
\end{align*}
which is equivalent to
\begin{align*}
\dfrac{\vert A^{\alpha}(\boldx)\vert^{2}(\text{Tr} A^{\alpha}
(\boldx))^{-2}}{1 + 2\lambda^{-1}\vert c^{\alpha}(\boldx)\vert
(\text{Tr}A^{\alpha}(\boldx))^{-1}
- \vert A^{\alpha}(\boldx)\vert^{-2}
\big( \lambda^{-2} \vert c^{\alpha}(\boldx)\vert^{2} 
+ (2\lambda)^{-1}\vert \boldsymbol{b}^{\alpha}(\boldx)\vert^{2}\big)}
\leq \dfrac{1}{d + \epsilon}.
\end{align*}
By (\ref{Cordes_coefficients_strong_hjb_fem_general}), we have that 
for any $\epsilon \in (\kappa,1)$,
\begin{align}
\label{hjb_fem_operator_constant_bound}
E^{\tilde{\alpha}}(\boldx) \leq &  1 - \epsilon,
\end{align}
for any $\boldx \in \Omega$ almost everywhere and for any $\tilde{\alpha}
\in \tilde{\Lambda}$. Here the constant $\kappa \in [0,1)$ will be determined 
later.

By (\ref{hjb_fem_opreator_projection},\ref{hjb_fem_operator_diff},\ref{hjb_fem_operator_constant_bound}), we have 
\begin{align}
\label{hjb_fem_operator_diff_ineq}
& \Vert \big(F_{h}(w_{1,h})-(\mathcal{L}_{I_{d},h}w_{1,h}-\lambda 
w_{1,h})\big) \\
\nonumber
&\qquad - \big(F_{h}(w_{2,h})-(\mathcal{L}_{I_{d},h}w_{2,h}-\lambda 
w_{2,h})\big)\Vert_{L^{p}(\Omega)} \\
\nonumber
\leq & C_{p,\lambda}^{\prime}C_{p}^{\prime}\sqrt{1 - \epsilon} 
\big( \Vert \overline{\nabla}_{h}(\nabla (w_{1,h}-w_{2,h})) 
\Vert_{L^{p}(\Omega)}^{p} \\
\nonumber
& \qquad + 2\lambda \Vert \nabla (w_{1,h}-w_{2,h}) \Vert_{L^{p}(\Omega)}^{p} 
+ \lambda^{2}\Vert w_{1,h}-w_{2,h} \Vert_{L^{p}(\Omega)}^{p}\big)^{1/p}.
\end{align}
%Here $C_{2,\lambda}^{\prime} = 1$ with convex domain.

By (\ref{def_discrete_deri}), it is easy to see that 
\begin{align}
\label{discrete_deri_bound1}
\Vert \overline{\nabla}_{h}(\nabla (w_{1,h}-w_{2,h})) 
\Vert_{L^{p}(\Omega)}^{p} \leq C_{p}^{\prime\prime} 
\Vert w_{1,h}-w_{2,h}\Vert_{W_{h}^{2,p}(\Omega)}.
\end{align}
By (\ref{coercivity_poisson_fem_lambda_ineq}, \ref{hjb_fem_operator_diff_ineq},
\ref{discrete_deri_bound1}), we have 
\begin{align}
\label{hjb_fem_contraction_ineq}
& \Vert M_{\lambda,h}\big(F_{h}(w_{1,h})-(\mathcal{L}_{I_{d},h}w_{1,h}-\lambda 
w_{1,h})\big)\\
\nonumber
&\qquad - M_{\lambda,h}\big(F_{h}(w_{2,h})-(\mathcal{L}_{I_{d},h}w_{2,h}-\lambda 
w_{2,h})\big)\Vert_{W_{h,\lambda}^{2.p}(\Omega)} \\
\nonumber
\leq & \big( C_{p,\lambda}C_{p,\lambda}^{\prime}C_{p}^{\prime}
C_{p}^{\prime\prime}\big)
\sqrt{1-\epsilon}\Vert w_{1,h}-w_{2,h}\Vert_{W_{h,\lambda}^{2,p}(\Omega)}.
\end{align}
In order to make the mapping (\ref{hjb_fem_contraction}) a contraction, 
it is sufficient to choose 
\begin{align}
\label{def_kappa}
\kappa = 1 - \dfrac{1}{\big( C_{p,\lambda}
C_{p,\lambda}^{\prime}C_{p}^{\prime}C_{p}^{\prime\prime}\big)^{2}},
\end{align}
such that $\big( C_{p,\lambda}C_{p,\lambda}^{\prime}C_{p}^{\prime}
C_{p}^{\prime\prime}\big)
\sqrt{1-\epsilon}<1$ for any $\epsilon \in (\kappa,1]$.
Therefore, the existence and uniqueness of numerical solution of 
the finite element method (\ref{hjb_fem}) is proven. 

Let $u_{h}\in V_{h}$ be the numerical solution of (\ref{hjb_fem}). 
Then we have 
\begin{align*}
F_{h}(u_{h}) = 0 \text{ and }
u_{h} = - M_{\lambda,h}\big(F_{h}(u_{h})-(\mathcal{L}_{I_{d},h}u_{h}-\lambda 
u_{h})\big) \text{ in }\Omega.
\end{align*}
By (\ref{hjb_fem_contraction_ineq}) with $w_{1,h}=u_{h}$ and 
$w_{2,h} = 0$, we have 
\begin{align*}
\Vert u_{h} - M_{\lambda,h}\big( \sup_{\tilde{\alpha}\in \tilde{\Lambda}}
\gamma^{\tilde{\alpha}} f^{\tilde{\alpha}}\big)\Vert_{W_{h,\lambda}^{2,p}(\Omega)} 
\leq \big( C_{p,\lambda}C_{p,\lambda}^{\prime}C_{p}^{\prime\prime}\big)
\sqrt{1-\epsilon}\Vert u_{h}\Vert_{W_{h,\lambda}^{2,p}(\Omega)}.
\end{align*}
Since $\big( C_{p,\lambda}C_{p,\lambda}^{\prime}C_{p}^{\prime}
C_{p}^{\prime\prime}\big)
\sqrt{1-\epsilon}<1$ for any $\epsilon \in (\kappa,1]$, we have that 
\begin{align}
\label{hjb_numerical_sol_bound1}
\Vert u_{h}\Vert_{W^{2,p}_{h}(\Omega)} \leq C \Vert 
\sup_{\tilde{\alpha}\in \tilde{\Lambda}}\vert f^{\tilde{\alpha}}\vert 
\Vert_{L^{p}(\Omega)}.
\end{align}

It is straightforward to verify that if there are 
$\epsilon \in (0,1]$ and $\lambda>0$ such that (\ref{Cordes_coefficients_strong_hjb_fem_general}) holds, then 
(\ref{Cordes_coefficients_strong_hjb_pde_general}) in Theorem~\ref{thm_hjb_pde_wellposedness} holds as well for the same 
$\epsilon$ and $\lambda$. Furthermore, it is easy to check that 
$\kappa \geq \underline{\kappa}$ which is introduced in (\ref{def_underline_kappa}). 
Then by Theorem~\ref{thm_hjb_pde_wellposedness}, there is a unique solution 
$u\in W_{2,p}(\Omega)\cap W_{0}^{1,p}(\Omega)$ of the Hamilton-Jacobi-Bellman 
equation (\ref{hjb_eqs_original}). 

We choose $\chi_{h} \in V_{h}$ arbitrarily. By (\ref{hjb_fem_contraction_ineq}) 
with $w_{1,h} = u_{h}$ and $w_{2,h}=\chi_{h}$, we have 
\begin{align*}
\Vert M_{\lambda,h}(F_{h}(\chi_{h})) + (u_{h} - \chi_{h}) \Vert_{W_{h,\lambda}^{2,p}
(\Omega)}\leq \big( C_{p,\lambda}C_{p,\lambda}^{\prime}C_{p}^{\prime}
C_{p}^{\prime\prime}\big)\sqrt{1-\epsilon} 
\Vert u_{h} - \chi_{h}\Vert_{W_{h,\lambda}^{2,p}(\Omega)}.
\end{align*}
By (\ref{coercivity_poisson_fem_lambda_ineq}) and the fact 
$\big( C_{p,\lambda}C_{p,\lambda}^{\prime}C_{p}^{\prime}
C_{p}^{\prime\prime}\big)\sqrt{1-\epsilon}<1$, we have 
\begin{align*}
\Vert u_{h} - \chi_{h}\Vert_{W_{h}^{2,p}(\Omega)} 
\leq C \Vert F_{h}(\chi_{h})\Vert_{L^{p}(\Omega)}.
\end{align*}

By the same argument in the proof of Theorem~\ref{thm_hjb_pde_wellposedness}, 
it is easy to see that 
\begin{align*}
\sup_{\tilde{\alpha}\in \tilde{\Lambda}}\gamma^{\alpha}
[A^{\tilde{\alpha}}:D^{2}u+\boldsymbol{b}^{\tilde{\alpha}}\cdot\nabla u 
+ c^{\tilde{\alpha}}u - f^{\tilde{\alpha}}] = 0 \text{ in } \Omega.
\end{align*}

We notice that 
\begin{align*}
&\Vert F_{h}(\chi_{h})\Vert_{L^{p}(\Omega)} 
\leq C \Vert \sup_{\tilde{\alpha}\in \tilde{\Lambda}}\gamma^{\tilde{\alpha}}
[A^{\tilde{\alpha}}:\overline{\nabla}_{h}
(\nabla \chi_{h}) + \boldsymbol{b}^{\tilde{\alpha}}\cdot\nabla \chi_{h} 
+ c^{\tilde{\alpha}}\chi_{h} - f^{\tilde{\alpha}}]\Vert_{L^{p}(\Omega)}\\
= & C \Vert \sup_{\tilde{\alpha}\in \tilde{\Lambda}}\gamma^{\tilde{\alpha}}
[A^{\tilde{\alpha}}:\overline{\nabla}_{h}
(\nabla \chi_{h}) + \boldsymbol{b}^{\tilde{\alpha}}\cdot\nabla \chi_{h} 
+ c^{\tilde{\alpha}}\chi_{h} - f^{\tilde{\alpha}}] \\
& \qquad - \sup_{\tilde{\alpha}\in \tilde{\Lambda}}\gamma^{\alpha}
[A^{\tilde{\alpha}}:D^{2}u+\boldsymbol{b}^{\tilde{\alpha}}\cdot\nabla u 
+ c^{\tilde{\alpha}}u - f^{\tilde{\alpha}}]
\Vert_{L^{p}(\Omega)} \\
\leq & C \Vert \sup_{\tilde{\alpha}\in \tilde{\Lambda}}  
\vert \gamma^{\tilde{\alpha}}
[A^{\tilde{\alpha}}:\overline{\nabla}_{h}
(\nabla \chi_{h}) + \boldsymbol{b}^{\tilde{\alpha}}\cdot\nabla \chi_{h} 
+ c^{\tilde{\alpha}}\chi_{h} - f^{\tilde{\alpha}}] \\
& \qquad -  \gamma^{\alpha}
[A^{\tilde{\alpha}}:D^{2}u+\boldsymbol{b}^{\tilde{\alpha}}\cdot\nabla u 
+ c^{\tilde{\alpha}}u - f^{\tilde{\alpha}}]\vert \Vert_{L^{p}(\Omega)}.
\end{align*}
Then by (\ref{hjb_uniform_ellipticity},\ref{hjb_coeffs_bounds}), we have 
\begin{align*}
&\Vert F_{h}(\chi_{h})\Vert_{L^{p}(\Omega)} 
\leq C \big( \Vert \overline{\nabla}_{h}(\nabla \chi_{h}) - D^{2}u\Vert_{L^{p}
(\Omega)} + \Vert \chi_{h} - u\Vert_{W^{1,p}(\Omega)}\big) \\
\leq & C \big( \Vert \overline{\nabla}_{h}(\nabla (\chi_{h}-u))
\Vert_{L^{p}(\Omega)} + \Vert \overline{\nabla}_{h}(\nabla u) 
- \nabla (\nabla u)  \Vert_{L^{p}(\Omega)}  
+ \Vert \chi_{h} - u\Vert_{W^{1,p}(\Omega)}\big).
\end{align*}
By Definition~\ref{def_discrete_deri} of the discrete gradient operator 
$\overline{\nabla}_{h}$, we have 
\begin{align*}
& \Vert \overline{\nabla}_{h}(\nabla \chi_{h}) - D^{2}u\Vert_{L^{p}
(\Omega)} \leq C \Vert u - \chi_{h}\Vert_{W_{h}^{2,p}(\Omega)}, \\
& \Vert \overline{\nabla}_{h}(\nabla u) 
- \nabla (\nabla u)  \Vert_{L^{p}(\Omega)} 
= \Vert \overline{P}_{h}D^{2}u - D^{2}u\Vert_{L^{p}(\Omega)}.
\end{align*} 
Therefore, (\ref{hjb_fem_con_ineq}) holds.

We can conclude that proof is complete.
\end{proof}

{\bf Acknowledgements} The author would like to thank Professor Rob Stevenson for the fruitful discussion of this paper 
between them when he visited University of Amsterdam in 2025. The author was supported by Liu Bie Ju Centre for 
Mathematical Sciences (project no. 9600007, 9360109, 9360020).

\end{document}